\documentclass[11pt]{amsart}
\usepackage{cmap}
\usepackage[utf8]{inputenc}
\usepackage[margin=1in]{geometry}
\usepackage{amsmath,amssymb,amsthm,bm,enumitem,graphicx,xcolor}
\usepackage[colorlinks=true,linkcolor=blue!50!black,citecolor=blue!50!black,urlcolor=blue!50!black,
  pdftitle={New explicit examples of Mannheim pair in L3},
  pdfauthor={Jose Alan, Renato Teixeira, Thiago Mendonca}]{hyperref}

\newtheorem{theorem}{Theorem}[section]
\newtheorem*{theorem A}{Theorem A}
\newtheorem*{theorem B}{N\"olker's Theorem}

\newtheorem{proposition}{Proposition}[section]
\newtheorem{corollary}{Corollary}[section]

\theoremstyle{remark}
\newtheorem{remark}{Remark}[section]
\theoremstyle{remark}

\theoremstyle{definition}
\newtheorem{definition}{Definition}[section]
\newtheorem{example}{Example}[section]
\numberwithin{equation}{section}

\newcommand{\kg}{{\kappa^{\smash[b]{T_{\beta}}}_{g}}}

\title{New explicit examples of Mannheim pair in $\mathbb{L}^3$}

\author{Jos\'e Alan}
\address{Universidade Federal Rural de Pernambuco (UFRPE), Departamento de Matem\'atica, Rua Manoel de Medeiros, s/n, Dois Irm\~aos, Recife, PE, 52171-900, Brazil}
\email{josealan.santos@ufrpe.br}

\author{Renato Teixeira}
\address{Universidade Federal Rural de Pernambuco (UFRPE), Departamento de Matem\'atica, Rua Manoel de Medeiros, s/n, Dois Irm\~aos, Recife, PE, 52171-900, Brazil}
\email{renato.teixeira@ufrpe.br}

\author{Thiago Mendon\c{c}a}
\address{Universidade Federal Rural de Pernambuco (UFRPE), Departamento de Matem\'atica, Rua Manoel de Medeiros, s/n, Dois Irm\~aos, Recife, PE, 52171-900, Brazil}
\email{thiago.mendonca@ufrpe.br}

\subjclass[2020]{Primary 53A04; Secondary 53B30, 53C50}
\keywords{Mannheim curve, Mannheim partner curve, slant helix, explicit parametrization, Minkowski 3-space.}
\date{}

\begin{document}

\begin{abstract}
In this paper, we construct explicit examples of Mannheim pairs, in the classical sense, in Minkowski 3-space $\mathbb{L}^3$, one for each of the five types of such pairs. In our main examples the Mannheim curve is neither a helix nor a slant helix. For each example we give the complete Frenet apparatus of both curves of the pair and their Cartesian coordinates in terms of elementary functions. The construction writes the tangent indicatrix in angular coordinates and uses a suitable auxiliary angle function, which reduces all the relevant integrals to elementary form; it is carried out both starting from the Mannheim curve and starting from its partner. In these coordinates, we also characterize the helices and the slant helices with timelike axis. We also prove, for the five types, that a Mannheim curve is a slant helix if and only if its partner is a helix, and we give an explicit example of such a pair.
\end{abstract}

\maketitle

\section{Introduction}

The study of curves with special geometric properties is a central topic in classical differential geometry. Among these, Mannheim curves---introduced by A. Mannheim in 1878 \cite{Mannheim}---are defined by the existence of a partner curve such that, at corresponding points, the principal normal line of the first curve coincides with the binormal line of the second. If $\alpha$ and $\alpha^*$ are such curves, the pair $(\alpha,\alpha^*)$ is called a Mannheim pair.

In Euclidean space $\mathbb{E}^3$ it is well known \cite{Liu,OrbayKasap} that a curve $\alpha$ with curvature $\kappa>0$ and torsion $\tau$ is a Mannheim curve if and only if there exists a nonzero constant $\lambda$ such that
\[
\kappa=\lambda(\kappa^2+\tau^2).
\]
In $\mathbb{L}^3$ the relation depends on the causal character of the Frenet frame of $\alpha$: writing $\alpha^*=\alpha+\lambda N_\alpha$, a Mannheim curve $\alpha$ with $\kappa>0$ satisfies, for a nonzero constant $\lambda$, one of
\begin{align*}
\kappa&=\lambda(\kappa^2-\tau^2) && (B_\alpha\ \text{timelike}),\\
\kappa&=\lambda(\tau^2-\kappa^2) && (T_\alpha\ \text{timelike}),\\
\kappa&=-\lambda(\kappa^2+\tau^2) && (N_\alpha\ \text{timelike}),
\end{align*}
and a converse holds (Theorem~\ref{theorem caracterization mannheim pair}; see also \cite{Liu,Kahraman2011,ElsharkawyElshenhab}).

The literature contains several characterizations and examples of Mannheim curves. Liu and Wang \cite{Liu} established necessary and sufficient conditions for a curve to be a Mannheim partner in Euclidean and Minkowski 3-spaces. In $\mathbb{L}^3$, Kahraman, \"Onder, Kazaz and U\u{g}urlu \cite{Kahraman2011} distinguished five types of Mannheim pairs, according to the causal characters of the Frenet vectors of the two curves, and Elsharkawy and Elshenhab \cite{ElsharkawyElshenhab} studied the relations between the Frenet apparatus of the curves of each type. Orbay and Kasap \cite{OrbayKasap} derived relationships between the curvatures and torsions of the two curves of a Mannheim pair in $\mathbb{E}^3$. More recently, \c{C}ay and Yaylı \cite{Alper} studied slant helices that are Mannheim curves in $\mathbb{E}^3$ and gave an explicit example. In \cite{Thiago}, the correspondence between Bishop frames of Mannheim pairs was analyzed, and in \cite{TS,Thiago1} Mannheim curves were investigated in Heisenberg groups and in the special linear group, respectively. U\c{c}um, Camcı and \.{I}larslan \cite{Ucum} provided a new approach to Mannheim curves in $\mathbb{E}^3$, in which the vector joining corresponding points need not be parallel to the principal normal, including examples within the classes of Salkowski and anti-Salkowski curves. This generalized approach was developed in $\mathbb{L}^3$ for curves with $T_\alpha$ timelike by Ar\i c\i{} and \.{I}larslan \cite{AriciIlarslan}, who constructed explicit examples in which the Mannheim curve is a slant helix and its mate is a helix, while curves with $T_\alpha$ spacelike are treated with a new approach in \cite{Kecilioglu2026}. In this paper we always consider the classical definition, in which $\alpha^*=\alpha+\lambda N_\alpha$.

To the best of our knowledge, there is no explicit example in the literature of a classical Mannheim pair in $\mathbb{L}^3$ whose Mannheim curve is neither a helix nor a slant helix. This raises the natural question: \textit{Are there Mannheim curves in $\mathbb{L}^3$ that are neither helices nor slant helices, yet admit explicit parametrizations by elementary functions?}

In this paper, we answer this question affirmatively. We describe the Mannheim curves of each of the five types by means of the angular coordinates of the tangent indicatrix and an auxiliary angle function $\delta(t)$ (Theorem~\ref{thm:general_mannheim}), and we do the same starting from the partner curve, with an auxiliary function $\delta^*(x)$ (Theorem~\ref{thm:mannheim_partner}). By choosing these functions suitably, we obtain, in five examples (one for each type) and in one more example constructed from the partner curve, closed-form expressions for the coordinates of the curves, their Frenet vectors, curvatures and torsions. In all of these examples the Mannheim curve is neither a helix nor a slant helix. The main technical difficulty is to choose the angular functions so that all integrals defining the coordinates become elementary simultaneously; the choices made in the examples are designed to overcome this difficulty. In addition, we prove, for the five types, that a Mannheim curve is a slant helix if and only if its partner is a helix (Proposition~\ref{prop:slant_helix_L3}), a fact known in $\mathbb{E}^3$ \cite{Ucum} and, for curves with $T_\alpha$ timelike in $\mathbb{L}^3$ and in the generalized setting, in \cite{AriciIlarslan}, and we give an explicit example of such a pair (Example~\ref{Example Mannheim pair slanthelixXhelix}). We also characterize, in the angular coordinates, the helices and the slant helices with timelike axis (Corollary~\ref{cor:helix_slantL3}).

The paper is organized as follows. In Section~\ref{sec:prelim}, we recall the basic notions (Minkowski space, Frenet apparatus, helices and slant helices), obtain the Frenet apparatus in terms of the angular coordinates of the tangent indicatrix, characterize in these coordinates the helices and the slant helices with timelike axis, and collect the relations between the curves of a Mannheim pair for the five types, including the equivalence above and the example of a (slant helix, helix) pair. In Section~\ref{sec:mannheim_x}, we write the Mannheim pair equations with respect to a generic parameter. In Section~\ref{sec:choice_phi mannheim condition}, we construct the Mannheim curves from the auxiliary angle function and present the five examples, with illustrations. In Section~\ref{sec:mannheim_partner}, we construct the Mannheim partner curves in the same way and present one more example.

\section{Preliminaries}\label{sec:prelim}

\subsection{Elementary functions}
A function is called \emph{elementary} if it can be obtained by elementary operations (addition, multiplication, division, composition, inversion, etc.) starting from a basic set of functions (polynomial, trigonometric, exponential, logarithmic, hyperbolic, etc.), excluding, for instance, elliptic and hypergeometric functions. Risch \cite{Risch1969} developed a method to decide whether a function admits an elementary integral, later detailed by Geddes, Czapor and Labahn \cite{Geddes1992}. A simple strategy to circumvent a negative result is to seek a new variable with respect to which the function becomes elementarily integrable.

\begin{example}
$\displaystyle \int \sqrt{1+\cosh^2(s)}\,ds$ is an elliptic integral (non-elementary).
Consider $f(t)$ such that $\sqrt{1+\cosh^2(f(t))}\cdot f'(t)=1$. Then,
\[
\int \sqrt{1+\cosh^2(s)}\,ds = \int 1\,dt = t + \text{const}.
\]
\end{example}

\begin{example}
Consider the curve $\alpha:\mathbb{R}\longrightarrow \mathbb{R}^{3}$
$$\dfrac{d\alpha}{ds}(s)=\scriptstyle \left(\dfrac{\scriptstyle a\sinh\left(t(s)\right)}{\scriptstyle \sqrt{a^2\cosh(2t(s))+c^2}},\ \dfrac{a\cosh\left(t(s)\right)}{ \sqrt{a^2\cosh(2t(s))+c^2}},\ \dfrac{c}{\sqrt{a^2\cosh(2t(s))+c^2}}\right),$$
with $\alpha(0)=(a,0,0)$, and where $t(s)$ is the inverse of
$$s(t)=\displaystyle \int_{0}^{t}\sqrt{a^2\cosh(2*)+c^2}\,d*.$$
The coordinates of $\displaystyle \int \dfrac{d\alpha}{ds}(s)\,ds$ are not elementary. And we have
$$\dfrac{d(\alpha\circ s)}{dt}(t)=\left(a\sinh(t),\,a\cosh(t),\,c\right),$$
hence
$$(\alpha\circ s)(t)=\left(a\cosh(t),\,a\sinh(t),\,c\,t\right).$$
\end{example}

\subsection{Minkowski space}
$\mathbb{L}^3=(\mathbb{R}^3, \langle \ ,\ \rangle )$ denotes the Minkowski 3-space, that is, $\mathbb{R}^3$ with the inner product
$$\left\langle \cdot , \cdot\right\rangle = dx^2 + dy^2 - dz^2 \ .$$

The basic facts on Lorentzian vector spaces used below can be found, for instance, in \cite{ONeill1983,Lopez2014}.

\begin{definition}[Causal character of a vector]Let $v\in T_{p}\mathbb{L}^3$. We say

\begin{enumerate}
    \item v is \textbf{Spacelike} if $\langle v,v\rangle >0$ or $v=0$.
    \item v is \textbf{Timelike} if $\langle v,v\rangle <0$.
    \item v is \textbf{Lightlike} if $\langle v,v\rangle =0$  and $v\neq 0$.
\end{enumerate}

\end{definition}

\begin{remark}
In our approach, we exclude lightlike vectors.
\end{remark}

\begin{proposition}\label{ortogonal do timelike}
Let $v \in T_p\mathbb{L}^3$ be a timelike vector and let $u \in T_p\mathbb{L}^3$ be nonzero with $\langle u,v\rangle = 0$. Then $u$ is spacelike.
\end{proposition}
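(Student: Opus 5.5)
The plan is a direct coordinate computation in $\mathbb{L}^3$ with the inner product $dx^2+dy^2-dz^2$, followed by a Cauchy--Schwarz estimate in the Euclidean plane.

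Write $v=(v_1,v_2,v_3)$ and $u=(u_1,u_2,u_3)$. Since $v$ is timelike,
\[
v_1^2+v_2^2<v_3^2 .
\]
In particular $v_3\neq 0$. The orthogonality condition $\langle u,v\rangle=0$ reads
\[
u_1v_1+u_2v_2=u_3v_3,
\]
which gives
\[
u_3=\frac{u_1v_1+u_2v_2}{v_3}.
\]

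The key step is to bound $u_3^2$ using the Euclidean Cauchy--Schwarz inequality in $\mathbb{R}^2$:
\[
u_3^2=\frac{(u_1v_1+u_2v_2)^2}{v_3^2}\le \frac{(u_1^2+u_2^2)(v_1^2+v_2^2)}{v_3^2}.
\]
The factor $(v_1^2+v_2^2)/v_3^2$ is strictly less than $1$. Hence $u_3^2\le u_1^2+u_2^2$, with strict inequality whenever $u_1^2+u_2^2>0$.

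This is the only point needing care, so I split into two cases.

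\textbf{Case $u_1=u_2=0$.} Then $u_3=0$ by the formula above, so $u=0$. This contradicts the hypothesis that $u$ is nonzero.

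\textbf{Case $u_1^2+u_2^2>0$.} The strict inequality gives
\[
\langle u,u\rangle=u_1^2+u_2^2-u_3^2>0,
\]
so $u$ is spacelike.

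No result earlier in the paper is needed; the argument uses only the definition of causal character.
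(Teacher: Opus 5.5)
Your proof is correct. Timelikeness gives $v_3\neq 0$ and $(v_1^2+v_2^2)/v_3^2<1$, orthogonality lets you solve for $u_3$, and Cauchy--Schwarz in $\mathbb{R}^2$ then gives $u_3^2<u_1^2+u_2^2$ unless $u_1=u_2=0$, in which case $u=0$.

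The paper gives no proof of this proposition. It lists it among the basic facts on Lorentzian vector spaces and refers to O'Neill and L\'opez, so there is no internal argument to match. The usual textbook proof is basis-free. Since $\langle v,v\rangle\neq 0$, one has $T_p\mathbb{L}^3=\operatorname{span}\{v\}\oplus v^{\perp}$. The scalar product has index $1$, and $\operatorname{span}\{v\}$ already carries the negative direction, so Sylvester's law of inertia forces the restriction to $v^{\perp}$ to be positive definite.

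The two routes trade off as follows. The inertia argument needs no coordinates and applies to any Lorentzian scalar product, but it depends on the inertia theorem. Yours uses only the explicit form $dx^2+dy^2-dz^2$ and the Euclidean Cauchy--Schwarz inequality, and it extends verbatim to $\mathbb{L}^n$ by using Cauchy--Schwarz in $\mathbb{R}^{n-1}$. Both give that $v^{\perp}$ is a spacelike plane, which is how the fact is used later, e.g.\ in Corollary~\ref{condicao dos vaerpsilons} and in the proof of Corollary~\ref{cor:helix_slantL3}.
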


\begin{definition}\label{def:norm}
The \textit{norm} of a vector $v \in T_{p}\mathbb{L}^3$ is defined by
\begin{equation}
    \|v\| = \sqrt{|\langle v, v \rangle|}.
\end{equation}
\end{definition}

\begin{definition}
The \emph{de Sitter space} is the one-sheeted hyperboloid
\[
\mathbb{S}^2_1=\{x\in\mathbb{L}^3:\langle x,x\rangle=1\}.
\]
The set $\{x\in\mathbb{L}^3:\langle x,x\rangle=-1\}$ is a two-sheeted hyperboloid, namely $\mathbb{H}^2\cup(-\mathbb{H}^2)$, where
\[
\mathbb{H}^2=\{x\in\mathbb{L}^3:\langle x,x\rangle=-1,\ x_3>0\}
\]
is its upper sheet, the hyperboloid model of the \emph{hyperbolic plane}.
\end{definition}

\begin{definition}\label{def:orthonormal-basis}
A basis $\{E_1,E_2,E_3\}$ of $T_p\mathbb{L}^3$ is \textit{orthonormal} if
\begin{equation}
    \langle E_i,E_j\rangle=\varepsilon_i \cdot \delta_{ij}\quad ,\quad \forall \ i,j=1,2,3,
\end{equation}
where $\varepsilon_{i}\in\{-1,1\}$, $\delta_{ii}=1$, $\delta_{ij}=0$ if $i\neq j$.
\end{definition}

\begin{definition}\label{def:cross-product}
The \textit{cross product} $u \wedge v$ on $T_{p}\mathbb{L}^3$ is defined by
\begin{equation}
    \langle u \wedge v, w \rangle := \begin{vmatrix}
        u_{1} & v_{1} & w_{1} \\
        u_{2} & v_{2} & w_{2} \\
        u_{3} & v_{3} & w_{3}
    \end{vmatrix},
\end{equation}
where $v=v_{1}\cdot E_{1} + v_{2}\cdot E_{2}+v_{3}\cdot E_{3}= (v_{1},v_{2},v_{3})$ denotes $v$ written in an orthonormal basis $\{E_{1},E_{2},E_{3}\}$ of $T_{p}\mathbb{L}^3$.
\end{definition}

\begin{proposition} Let $u,v \in T_{p}\mathbb{L}^3$, then
\begin{enumerate}
\item $u\wedge v$ is antisymmetric.
\item \begin{equation}
u \wedge v=\left(u_{2}v_{3}-u_{3}v_{2},\ u_{3}v_{1}-u_{1}v_{3},\ u_{2}v_{1}-u_{1}v_{2} \right)\hat{=}\begin{vmatrix}
    \textbf{i}&\textbf{j}&-\textbf{k}
    \\
    u_{1}&u_{2}&u_{3}
     \\
    v_{1}&v_{2}&v_{3}
\end{vmatrix}
\end{equation}
    \item \begin{equation}
    \langle u \wedge v, u \wedge v\rangle =\langle u , v\rangle.\langle u , v\rangle-\langle u , u\rangle.\langle v , v\rangle
    \end{equation}
    \item $\{E_{1},E_{2}\}$ orthonormal: $\|E_{1}\|=\|E_{2}\|=1,\ \langle E_{1},E_{2}\rangle =0$. Then, $\{E_{1},E_{2},E_{3}\}$, with $E_{3}=E_{1}\wedge E_{2}$, is orthonormal, and
    \begin{equation}
\varepsilon_{1}.\varepsilon_{2}.\varepsilon_{3}=-1.\end{equation}

\end{enumerate}
\end{proposition}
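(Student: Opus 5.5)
The plan is to work directly in coordinates with respect to a fixed orthonormal basis $\{E_1,E_2,E_3\}$. Applying Definition~\ref{def:orthonormal-basis} and the remark that the metric is $dx^2+dy^2-dz^2$, we take $\varepsilon_1=\varepsilon_2=1$ and $\varepsilon_3=-1$. Then
\[
\langle u,w\rangle=u_1w_1+u_2w_2-u_3w_3 .
\]
\emph{Item (2) first.} Expand the determinant in Definition~\ref{def:cross-product} along its third column:
\[
\langle u\wedge v,w\rangle=w_1(u_2v_3-u_3v_2)+w_2(u_3v_1-u_1v_3)+w_3(u_1v_2-u_2v_1).
\]
Write $u\wedge v=(a_1,a_2,a_3)$, so that $\langle u\wedge v,w\rangle=a_1w_1+a_2w_2-a_3w_3$ for every $w$. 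Comparing coefficients of $w_1,w_2,w_3$ gives
\[
a_1=u_2v_3-u_3v_2,\qquad a_2=u_3v_1-u_1v_3,\qquad a_3=u_2v_1-u_1v_2 .
\]
This is exactly the stated formula, and it equals the formal determinant with $-\mathbf{k}$ in the last slot.

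\emph{Item (1).} Swapping $u$ and $v$ swaps two columns of the determinant, so $\langle u\wedge v,w\rangle=-\langle v\wedge u,w\rangle$ for every $w$. Since the metric is nondegenerate, $u\wedge v=-v\wedge u$. One can also read this off directly from the formula in (2).

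\emph{Item (3).} This is a Lagrange-type identity and is the only real computation. Using (2),
\[
\langle u\wedge v,u\wedge v\rangle=(u_2v_3-u_3v_2)^2+(u_3v_1-u_1v_3)^2-(u_2v_1-u_1v_2)^2 .
\]
The right-hand side $\langle u,v\rangle^2-\langle u,u\rangle\langle v,v\rangle$ equals
\[
(u_1v_1+u_2v_2-u_3v_3)^2-(u_1^2+u_2^2-u_3^2)(v_1^2+v_2^2-v_3^2).
\]
I will expand both sides and match monomials: the terms $u_i^2v_j^2$ with $i\neq j$, and the mixed terms $u_iu_jv_iv_j$. The obstacle is only sign bookkeeping, because the entries with index $3$ carry a minus sign. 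A shortcut is to substitute $u_3\to iu_3$, $v_3\to iv_3$ into the Euclidean Lagrange identity; to avoid complex numbers, I will instead just check the three groups of monomials directly.

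\emph{Item (4).} Set $E_3=E_1\wedge E_2$. By the determinant definition, $\langle E_3,E_1\rangle$ and $\langle E_3,E_2\rangle$ each vanish, since each is a determinant with a repeated column. Item (3) then gives
\[
\langle E_3,E_3\rangle=0-\varepsilon_1\varepsilon_2=-\varepsilon_1\varepsilon_2\neq 0 .
\]
So $E_3$ is unit, with $\varepsilon_3=-\varepsilon_1\varepsilon_2$, which means $\varepsilon_1\varepsilon_2\varepsilon_3=-\varepsilon_1^2\varepsilon_2^2=-1$.

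It remains to see that the three vectors form a basis. They are mutually orthogonal and each has $\langle E_i,E_i\rangle=\pm1\neq 0$. Hence they are linearly independent: pairing a vanishing linear combination with each $E_i$ forces the corresponding coefficient to vanish. Therefore $\{E_1,E_2,E_3\}$ is an orthonormal basis.
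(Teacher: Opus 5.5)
Your proof is correct and complete. The paper gives no proof of this proposition; it is quoted as standard background, with the pointer to O'Neill and L\'opez. So there is no competing route, and your coordinate verification is the standard argument.

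The one step you only announce, the monomial matching in (3), does close: both sides equal
\[
u_1^2v_3^2+u_2^2v_3^2+u_3^2v_1^2+u_3^2v_2^2-u_1^2v_2^2-u_2^2v_1^2+2u_1u_2v_1v_2-2u_1u_3v_1v_3-2u_2u_3v_2v_3 .
\]
Your discarded shortcut, $u_3\mapsto iu_3$, $v_3\mapsto iv_3$ in the Euclidean Lagrange identity, is also legitimate. That identity is a polynomial identity with integer coefficients, so the substitution is allowed.

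You are right that formula (2) pins down a reference basis with $\varepsilon_3=-1$, while items (1), (3) and (4) do not depend on that choice. In (4), the case $\varepsilon_1=\varepsilon_2=-1$ cannot actually occur (Proposition~\ref{ortogonal do timelike}), but your argument gives $\varepsilon_1\varepsilon_2\varepsilon_3=-1$ without needing to exclude it.
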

\begin{remark} Note that
$\varepsilon_{1}=-\varepsilon_{2}.\varepsilon_{3},\ \varepsilon_{2}=-\varepsilon_{1}.\varepsilon_{3},\ \varepsilon_{3}=-\varepsilon_{1}.\varepsilon_{2}$.
\end{remark}

\begin{corollary}\label{condicao dos vaerpsilons}
Let $\{E_1, E_2, E_3\}$, with $E_3 = E_1 \wedge E_2$, be an orthonormal basis of $T_p\mathbb{L}^3$.
\begin{enumerate}
\item At least one of the $\varepsilon_i$ must be -1.
    \item  If $\varepsilon_i = -1$, then  $\varepsilon_j = \varepsilon_k = 1$, with $i \neq j$, $j \neq k$, $k \neq i$.
    \item $E_{2}\wedge E_{3}=-\varepsilon_{2}\cdot E_{1}$ and $E_{3}\wedge E_{1}=-\varepsilon_{1}\cdot E_{2}$.
\end{enumerate}
\end{corollary}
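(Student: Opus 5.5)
The plan is to derive all three items from the preceding Proposition, in particular from the identity $\varepsilon_1\varepsilon_2\varepsilon_3=-1$ and the triple-product definition of the cross product in Definition~\ref{def:cross-product}.

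\emph{Items (1) and (2).} Each $\varepsilon_i$ lies in $\{-1,1\}$ and their product is $-1$, so an odd number of them equal $-1$: either exactly one or all three. I will rule out the case $\varepsilon_1=\varepsilon_2=\varepsilon_3=-1$ by Proposition~\ref{ortogonal do timelike}. In that case $E_1$ would be timelike and $E_2$ would be a nonzero vector orthogonal to it, hence spacelike, so $\varepsilon_2=1$, a contradiction. This leaves exactly one $\varepsilon_i=-1$, which gives (1). It also gives (2): if $\varepsilon_i=-1$, the remaining two indices cannot also carry $-1$, so $\varepsilon_j=\varepsilon_k=1$. Alternatively, one can use the Remark $\varepsilon_j=-\varepsilon_i\varepsilon_k=\varepsilon_k$ together with the exclusion of the all-negative case.

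\emph{Item (3).} I will expand $E_2\wedge E_3$ in the basis. For each $l$, $\langle E_2\wedge E_3,E_l\rangle$ is the triple product $\det(E_2,E_3,E_l)$ computed in coordinates of an orthonormal basis. This determinant vanishes for $l=2,3$, and for $l=1$ it equals $\det(E_1,E_2,E_3)=\langle E_1\wedge E_2,E_3\rangle=\langle E_3,E_3\rangle=\varepsilon_3$. Since $\langle E_1,E_1\rangle=\varepsilon_1$, this yields $E_2\wedge E_3=\varepsilon_1\varepsilon_3E_1=-\varepsilon_2E_1$, by the Remark. In the same way, $\langle E_3\wedge E_1,E_l\rangle$ is nonzero only for $l=2$, where it equals $\det(E_3,E_1,E_2)=\varepsilon_3$. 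Hence $E_3\wedge E_1=\varepsilon_2\varepsilon_3E_2=-\varepsilon_1E_2$.

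\emph{Main obstacle.} The delicate point is that the determinant is well-defined independently of the chosen orthonormal basis, which holds up to sign (orientation). I will compute every determinant in one fixed orthonormal basis of $T_p\mathbb{L}^3$, namely the standard one. There the triple product is an alternating trilinear form, so cyclic permutations preserve it and the computations above are consistent.
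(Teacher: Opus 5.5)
Your argument is correct and follows the route the paper intends: the corollary is stated there without a separate proof, as an immediate consequence of $\varepsilon_1\varepsilon_2\varepsilon_3=-1$, the Remark $\varepsilon_i=-\varepsilon_j\varepsilon_k$, and Proposition~\ref{ortogonal do timelike}, which are exactly the ingredients you use. Your expansion $E_2\wedge E_3=\sum_l\varepsilon_l\langle E_2\wedge E_3,E_l\rangle E_l$, with every determinant computed in the standard basis and using its invariance under cyclic permutations, correctly supplies the details for item (3).
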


\begin{definition}
Let $\pi$ be a plane in $T_p\mathbb{L}^3$, such that $\{E_1, E_2\}$ be an orthonormal basis of $\pi$. The plane $\pi$ is called:
\begin{enumerate}
    \item \textbf{spacelike} if $\varepsilon_1 = \varepsilon_2 = +1$,
    \item \textbf{timelike} if $\varepsilon_1=+1 ,\varepsilon_2 = -1$ or $\varepsilon_1=-1 ,\varepsilon_2 = +1$.
\end{enumerate}
\end{definition}
\begin{remark}
From Proposition~\ref{ortogonal do timelike}, the case $\varepsilon_1 = \varepsilon_2 = -1$ cannot occur.
\end{remark}

\begin{definition}
Let $u, v \in T_p\mathbb{L}^3$ be non-null vectors. The angle between $u$ and $v$ is defined according to their causal character:
\begin{enumerate}
    \item[(a)] If $u$ and $v$ are timelike, the hyperbolic angle $\varphi_{1} \ge 0$ is given by
    \begin{equation}\label{angle type 1}
     \cosh \varphi_{1} = \frac{|\langle u, v \rangle|}{\sqrt{|\langle u, u \rangle \langle v, v \rangle|}}.
    \end{equation}
    \item[(b)] If one vector is spacelike and the other is timelike, the hyperbolic angle $\varphi_{2} \ge 0$ is given by
    \begin{equation}
    \label{angle type 2}
    \sinh \varphi_{2} = \frac{|\langle u, v \rangle|}{\sqrt{\langle u, u \rangle \, |\langle v, v \rangle|}}.
    \end{equation}
    \item[] If $u$ and $v$ are spacelike, we have two subcases:
    \begin{enumerate}[label=(\alph*),start=3]
        \item If they span a spacelike plane (i.e., $\langle u \wedge v, u \wedge v \rangle <0$), the real angle $\varphi_{3} \in [0,\pi]$ is given by
        \begin{equation}\label{angle type 3}
            \cos \varphi_{3} = \frac{\langle u, v \rangle}{\sqrt{\langle u, u \rangle \langle v, v \rangle}}.
        \end{equation}
        \item If they span a timelike plane (i.e., $\langle u \wedge v, u \wedge v \rangle >0$), the hyperbolic angle $\varphi_{4} \ge 0$ is given by
        \begin{equation}\label{angle type 5}
        \cosh \varphi_{4} = \frac{|\langle u, v \rangle|}{\sqrt{\langle u, u \rangle \langle v, v \rangle}}.
        \end{equation}
    \end{enumerate}
\end{enumerate}
\end{definition}

\subsection{Curves in Minkowski Space \texorpdfstring{$\mathbb{L}^3$}{L3}}

The \textit{prime} ``\ $\prime{}$\ ''  denotes derivative with respect to generic parameter $t$. And the \textit{dot} ``\ $\dot{}$\ ''  denotes derivative with respect to arc-length $s$.

\begin{definition}
Let \(\alpha: I \subset \mathbb{R} \to \mathbb{L}^3\) be a regular curve. If \(\|\dot{\alpha}(s)\| = 1\) for all \(s \in I\), then \(s\) is called the \textit{arc length parameter} of \(\alpha\) and \(\alpha\) is said to be \textit{parameterized by arc length}.
\end{definition}

\begin{definition}\label{definicao inicial aparato de frenet}
Let $\beta: J \subset \mathbb{R} \to \mathbb{L}^3$, $\beta(t)$, be a regular curve parametrized by an arbitrary parameter $t$, with $\|\beta'(t)\| \neq 0$ for all $t \in J$. The Frenet–Serret apparatus of $\beta$ consists of the following vector fields and scalar functions:
\begin{enumerate}
    \item \textbf{Tangent:}
    \begin{equation}\label{T x geral}
    T_{\beta}(t) = \dfrac{\beta'(t)}{\|\beta'(t)\|},
    \end{equation}
    \item \textbf{Curvature:}
    \begin{equation}\label{kappa x geral}
    \kappa_{\beta}(t) = \dfrac{\|\beta'(t) \wedge \beta''(t)\|}{\|\beta'(t)\|^3},
    \end{equation}
    If $\kappa_\beta(t)\neq 0$,

     \item \textbf{Principal normal:}
     \begin{equation}\label{N x geral}
     N_{\beta}(t) = \dfrac{T'_{\beta}(t)}{\|T'_{\beta}(t)\|}.  \end{equation}

    \item \textbf{Binormal:}
    \begin{equation}\label{binormal x geral}
     B_{\beta}(t) = T_{\beta}(t) \wedge N_{\beta}(t),
    \end{equation}

    \item \textbf{Torsion:}
    \begin{equation}\label{tau x geral}
    \tau_{\beta}(t) = \dfrac{\det\bigl(\beta'(t),\,\beta''(t),\,\beta'''(t)\bigr)}{\|\beta'(t) \wedge  \beta''(t)\|^2}.
    \end{equation}
\end{enumerate}
\end{definition}

\begin{definition}
Let $\beta: J \longrightarrow \mathbb{L}^3$, $\beta(t)$, be a regular curve and $t: I \longrightarrow J$, $t(v)$, a diffeomorphism. The curve $ \alpha= \beta \circ t: I \longrightarrow \mathbb{L}^3$, $ \alpha(v)= \beta (t(v))$, is called a \textit{reparametrization} of $\beta$ and $t(v)$ a \textit{change of parameter} of $\beta$. When $v=s$ is the arc length parameter of $\alpha$, $\alpha$ is called an \textit{arc length reparametrization} of $\beta$ and $t(s)$ is called a \textit{change of parameter by arc length} of $\beta$.
\end{definition}

\begin{proposition}Let $\alpha:I\longrightarrow \mathbb{L}^3$, $\alpha(v)=\beta(t(v))$, be a reparametrization of $\beta:J\longrightarrow \mathbb{L}^3$, $\beta(t)=\alpha(v(t))$, with $v=v(t)$ inverse of $t(v)$ and $\dfrac{dv}{dt}>0$. Then the Frenet apparatus of $\beta$ and $\alpha$ are related by:
\begin{enumerate}
    \item \textbf{Tangent at t:}
    \begin{equation}\label{tangente at t}
    T_{\beta}(t)=T_{\alpha}(v(t))
    \end{equation}
    \item \textbf{Curvature at t:}
    \begin{equation}\label{kappa at t}
    \kappa_{\beta}(t)=\kappa_{\alpha}(v(t))
    \end{equation}
    If $\kappa_\beta(t)\neq 0$,
    \item \textbf{Principal normal at t:}
    \begin{equation}\label{normal at t}
    N_{\beta}(t)=N_{\alpha}(v(t))
    \end{equation}
    \item \textbf{Binormal at t:}
    \begin{equation}\label{binormal at t}
    B_{\beta}(t)=B_{\alpha}(v(t))
    \end{equation}
    \item \textbf{Torsion at t:}
    \begin{equation}\label{torsion at t}
    \tau_{\beta}(t)=\tau_{\alpha}(v(t))
    \end{equation}
\end{enumerate}
\end{proposition}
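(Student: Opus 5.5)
The plan is to work entirely from the formulas in Definition~\ref{definicao inicial aparato de frenet}. Each quantity of $\beta=\alpha\circ v$ at $t$ will be written in terms of derivatives of $\alpha$ at $v(t)$ via the chain rule. Throughout, set $v'=dv/dt>0$ and denote derivatives of $\alpha$ with respect to $v$ by $\alpha_v,\alpha_{vv},\alpha_{vvv}$. The chain rule gives
\[
\beta'=v'\,\alpha_v,\qquad \beta''=v''\alpha_v+(v')^2\alpha_{vv},\qquad \beta'''=v'''\alpha_v+3v'v''\alpha_{vv}+(v')^3\alpha_{vvv},
\]
all evaluated at $v(t)$.

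\textbf{Tangent.} Since $\|\cdot\|=\sqrt{|\langle\cdot,\cdot\rangle|}$ is homogeneous, $\|\beta'\|=|v'|\,\|\alpha_v\|=v'\|\alpha_v\|$. Hence $T_\beta(t)=\alpha_v/\|\alpha_v\|=T_\alpha(v(t))$. The positivity $v'>0$ is used exactly here; it is what prevents a sign flip.

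\textbf{Curvature and torsion.} The cross product is bilinear and antisymmetric, and $\alpha_v\wedge\alpha_v=0$. Therefore $\beta'\wedge\beta''=(v')^3\,\alpha_v\wedge\alpha_{vv}$, so
\[
\kappa_\beta=\frac{(v')^3\|\alpha_v\wedge\alpha_{vv}\|}{(v')^3\|\alpha_v\|^3}=\kappa_\alpha(v(t)).
\]
For the determinant, multilinearity and the alternating property kill every term containing a repeated column. This leaves $\det(\beta',\beta'',\beta''')=(v')^6\det(\alpha_v,\alpha_{vv},\alpha_{vvv})$. The denominator is $\|\beta'\wedge\beta''\|^2=(v')^6\|\alpha_v\wedge\alpha_{vv}\|^2$, so the factors cancel and $\tau_\beta=\tau_\alpha(v(t))$.

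\textbf{Normal and binormal.} From $T_\beta(t)=T_\alpha(v(t))$ we get $T_\beta'(t)=v'(t)\,\frac{dT_\alpha}{dv}(v(t))$. Since $v'>0$, normalizing gives
\[
N_\beta(t)=\frac{dT_\alpha/dv}{\|dT_\alpha/dv\|}\Big|_{v(t)}=N_\alpha(v(t)),
\]
which is well defined because $\kappa\neq0$. The binormal then follows immediately: $B_\beta=T_\beta\wedge N_\beta=T_\alpha\wedge N_\alpha=B_\alpha$ at $v(t)$.

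The computation is routine. The only point needing care is the bookkeeping of $v'>0$ in the norms, since $\|\lambda w\|=|\lambda|\|w\|$ holds even for the indefinite metric. I would also note that the curvature, normal and torsion are defined at the same points for both curves, because $\kappa_\beta\neq0$ if and only if $\kappa_\alpha\neq0$ there.
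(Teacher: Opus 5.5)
Your proof is correct and complete. The paper states this proposition without proof as a standard fact, and your chain-rule computation from the defining formulas, with every power of $\frac{dv}{dt}>0$ cancelling, is exactly the omitted verification; it is the same mechanism as the cancellation of $\frac{ds}{dt}$ in the proof of Proposition~\ref{thm:arbitrary_param}. For the normal, you could simply note that $\|T_\beta'(t)\|=\frac{dv}{dt}(t)\,\|\tfrac{dT_\alpha}{dv}(v(t))\|$, so $N_\beta$ is defined exactly where $N_\alpha$ is, with no need to relate $\kappa\neq0$ to $\|T'\|\neq0$ in the indefinite metric.
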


\begin{definition}
Let $S$ be the de Sitter space $\mathbb S^{2}_{1}$ or the hyperbolic plane $\mathbb H^{2}$, and let $\gamma:I\to S$, $\gamma(t)$, be a regular curve with $\langle\gamma',\gamma'\rangle\neq0$. Its \emph{geodesic curvature} (with respect to the orientation $N(p)=p$) in $S$ is
\begin{equation}\label{geodesic curvature in S21}
\kappa^{\smash[b]{\gamma}}_{g}(t)=\frac{\det(\gamma(t),\gamma'(t),\gamma''(t))}{\|\gamma'(t)\|^{3}} .
\end{equation}
\end{definition}

\begin{remark}
The definition above is a particular case of the following one: for a regular curve $\gamma$ with $\langle\gamma',\gamma'\rangle\neq0$ on an oriented surface $S\subset\mathbb L^{3}$ with unit normal $N$,
\begin{equation}\label{geodesic curvature general L3}
\kappa^{\smash[b]{\gamma,S}}_{g}(t)=\frac{\langle N(\gamma(t))\wedge\gamma'(t),\,\gamma''(t)\rangle}{\|\gamma'(t)\|^{3}}.
\end{equation}

Taking $S=\mathbb S^{2}_{1}$ or $S=\mathbb H^{2}$ and $N(p)=p$ one obtains the definition above.
\end{remark}

\begin{proposition}
Let $\beta:I\to\mathbb L^{3}$ be a regular curve, $\beta(t)$, with $T_\beta$ non-lightlike and $\langle T_\beta',T_\beta'\rangle\neq0$. The geodesic curvature, in $\mathbb S^{2}_{1}$ (if $T_\beta$ is spacelike) or in $\mathbb H^{2}$ (if $T_\beta$ is timelike, with $T_\beta\in\mathbb H^{2}$), of its tangent indicatrix $T_{\beta}(t)$ is

\begin{equation}\label{geodesic curvature and quotient L3}
\kappa^{\smash[b]{T_{\beta}}}_{g}(t)=\frac{\tau_\beta(t)}{\kappa_\beta(t)}.
\end{equation}
\end{proposition}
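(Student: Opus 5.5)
We reduce to the arc-length parametrization and then compute the geodesic curvature of the tangent indicatrix directly from the Frenet--Serret equations. By \eqref{tangente at t}, \eqref{kappa at t} and \eqref{torsion at t}, $T_\beta$, $\kappa_\beta$ and $\tau_\beta$ are invariant under orientation-preserving reparametrization. The quotient in \eqref{geodesic curvature in S21} is invariant (up to the same orientation convention) under reparametrizations of $\gamma$, because the $t'$ and $t'^3$ factors cancel. Hence it suffices to take $\beta=\alpha$ parametrized by arc length $s$ and $\gamma(s)=T_\alpha(s)$. Note that $\gamma\in\mathbb S^2_1$ when $T_\alpha$ is spacelike, and $\gamma\in\mathbb H^2$ when $T_\alpha$ is timelike (after choosing the future orientation).

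Next we write the Frenet equations in $\mathbb L^3$ with the signs $\varepsilon_1=\langle T,T\rangle$, $\varepsilon_2=\langle N,N\rangle$, $\varepsilon_3=\langle B,B\rangle$, using $B=T\wedge N$ and Corollary~\ref{condicao dos vaerpsilons}:
\[
\dot T=\kappa N,\qquad \dot N=-\varepsilon_1\varepsilon_2\kappa T-\varepsilon_2\varepsilon_3\tau B,\qquad \dot B=\ldots,
\]
with the sign convention that makes \eqref{tau x geral} hold. Rather than fix sign conventions abstractly, I would verify them against \eqref{kappa x geral}--\eqref{tau x geral}: one has $\dot\alpha\wedge\ddot\alpha=\kappa\, T\wedge N=\kappa B$, and $\det(\dot\alpha,\ddot\alpha,\dddot\alpha)=\langle \dot\alpha\wedge\ddot\alpha,\dddot\alpha\rangle=\kappa\langle B,\dddot\alpha\rangle$. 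This determines the $B$-component of $\dot N$ in terms of $\tau$.

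Then compute the ingredients for $\gamma=T$:
\[
\gamma'=\kappa N,\qquad \|\gamma'\|=\kappa,\qquad \gamma''=\dot\kappa N+\kappa\dot N .
\]
Therefore
\[
\det(\gamma,\gamma',\gamma'')=\det(T,\kappa N,\kappa\dot N)=\kappa^2\langle T\wedge N,\dot N\rangle=\kappa^2\langle B,\dot N\rangle .
\]
From the determinant identity above, $\dddot\alpha=\dot\kappa N+\kappa\dot N$, so
\[
\det(\dot\alpha,\ddot\alpha,\dddot\alpha)=\kappa^2\langle B,\dot N\rangle ,
\]
while $\|\dot\alpha\wedge\ddot\alpha\|^2=\kappa^2$ because $B$ is unit. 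Hence $\tau=\langle B,\dot N\rangle$. This identification sidesteps all the $\varepsilon$ bookkeeping. Substituting gives
\[
\kappa_g^{T}=\frac{\kappa^2\tau}{\kappa^3}=\frac{\tau}{\kappa}.
\]

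The main obstacle is orientation and sign consistency. We must check that the determinant $\det(\gamma,\gamma',\gamma'')$ in \eqref{geodesic curvature in S21} is the same determinant (in the orthonormal basis of Definition~\ref{def:cross-product}) as the one used in \eqref{tau x geral}; this is what justifies replacing $\det(u,v,w)$ by $\langle u\wedge v,w\rangle$ in both places. We must also check that, in the timelike case, the normal $N(p)=p$ corresponds to $T_\beta\in\mathbb H^2$ rather than $-\mathbb H^2$. If $T_\beta$ lay in $-\mathbb H^2$, the sign would flip, which is why the statement imposes $T_\beta\in\mathbb H^2$. Finally, the hypothesis $\langle T_\beta',T_\beta'\rangle\neq0$ guarantees that $N$ is non-lightlike and that $\kappa\neq0$, so every division above is legitimate.
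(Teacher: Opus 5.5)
The paper states this proposition without proof, so there is no argument of the authors to compare against. Your proof is correct and complete.

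The decisive point is the one you isolate. In the arc-length parametrization, $\gamma=T_\alpha=\dot\alpha$, so $\det(\gamma,\gamma',\gamma'')$ is literally $\det(\dot\alpha,\ddot\alpha,\dddot\alpha)$, the numerator in \eqref{tau x geral}. At the same time $\|\gamma'\|=\|\dot\alpha\wedge\ddot\alpha\|=\kappa$. This yields $\tau\kappa^2/\kappa^3=\tau/\kappa$ with no $\varepsilon$-bookkeeping at all; you do not even need to introduce $N$ and $\dot N$. The reduction to arc length is legitimate for two reasons. First, $ds/dt=\|\beta'\|>0$, and \eqref{geodesic curvature in S21} is invariant under reparametrizations with positive derivative. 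Second, \eqref{kappa x geral} and \eqref{tau x geral} are invariant under any reparametrization.

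There are two small corrections.

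\emph{The Frenet display.} The display you write down but never use has the wrong $B$-coefficient. Since $\varepsilon_1\varepsilon_2\varepsilon_3=-1$, your $-\varepsilon_2\varepsilon_3\tau$ equals $\varepsilon_1\tau$. However, the identity $\tau=\langle B,\dot N\rangle$, which you derive and which matches \eqref{eq:frenet}, forces $\dot N=\varepsilon_3(\kappa T+\tau B)$. The two coefficients differ whenever $T$ or $B$ is timelike, so simply delete that display.

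\emph{The sheet of the hyperboloid.} The determinant formula \eqref{geodesic curvature in S21}, with $N(p)=p$, gives the same result on either sheet of the hyperboloid. The sign flip you mention for $-\mathbb H^2$ occurs only if one replaces $T_\beta$ by $-T_\beta\in\mathbb H^2$, or uses the future-pointing normal there. The hypothesis $T_\beta\in\mathbb H^2$ is needed just so that the definition, which is stated for $\mathbb H^2$, applies.
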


\subsection{Helix and Slant helix}

The characterization of helices by the constancy of $\tau/\kappa$ was stated by Lancret \cite{Lancret1802} and first proved by Saint-Venant \cite{SaintVenant1845}; slant helices were introduced by Izumiya and Takeuchi \cite{IzumiyaTakeuchi2004} in $\mathbb{E}^3$ and studied in $\mathbb{L}^3$ in \cite{AliLopez}.

\begin{definition}[\cite{CoutoLymberopoulos2020}]
A curve $\beta(t)$ in $\mathbb{L}^3$ with non-zero curvature is called a \emph{generalized helix} if its tangent $T_{\beta}$ makes a constant angle with a fixed direction in space.
\end{definition}

\begin{definition}[\cite{AliLopez}]
A curve $\beta(t)$ in $\mathbb{L}^3$ is called a \textit{slant helix} if its normal $N_{\beta}$ makes a constant angle with a fixed direction in space.
\end{definition}

\begin{theorem}[{\cite{CoutoLymberopoulos2020}; \cite[Theorem~2.12]{Lopez2014}}]\label{helix characterization}
A curve $\beta(t)$ in $\mathbb{L}^3$ with curvature $\kappa_{\beta} \neq 0$ is a generalized \textit{helix} if and only if
\begin{equation}\label{generalhelix_equation}
\sigma_{0}(t) =\frac{\tau_{\beta}(t)}{\kappa_{\beta}(t)}= \text{constant}.
\end{equation}
\end{theorem}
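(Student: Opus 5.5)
We prove the two implications separately, working with the arc-length reparametrization $\alpha(s)$ of $\beta$. This is legitimate because, by the reparametrization proposition, $T_\beta,N_\beta,B_\beta,\kappa_\beta,\tau_\beta$ at $t$ agree with those of $\alpha$ at $s(t)$, so $\tau_\beta/\kappa_\beta$ is constant in $t$ exactly when $\tau_\alpha/\kappa_\alpha$ is constant in $s$. We use the Lorentzian Frenet equations
\[
\dot T=\kappa N,\qquad \dot N=-\varepsilon_T\varepsilon_N\kappa T+\varepsilon_N\varepsilon_B\,\tau B,\qquad \dot B=\varepsilon_T\varepsilon_B\,\tau N\quad(\text{up to the sign convention}),
\]
with $\varepsilon_T\varepsilon_N\varepsilon_B=-1$ by Corollary~\ref{condicao dos vaerpsilons}. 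The cases are split by which of $T,N,B$ is timelike.

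\textbf{Helix $\Rightarrow$ constant ratio.} Suppose $\langle T,U\rangle=c$ is constant for a fixed unit non-null vector $U$; in every angle type (a)--(d), a constant angle means constant $|\langle T,U\rangle|$, hence $\langle T,U\rangle$ constant by continuity.
\begin{enumerate}
\item Differentiating gives $\kappa\langle N,U\rangle=0$, and since $\kappa\neq0$, $U\perp N$.
\item Hence $U=aT+bB$ with $a=\varepsilon_T c$ constant.
\item From $\langle U,U\rangle=\varepsilon_T a^2+\varepsilon_B b^2=\text{const}$, the coefficient $b$ is constant. (If $b\equiv0$ then $U$ is parallel to $T$, so $T$ is constant and $\kappa=0$; this is excluded.)
\item Differentiating $\langle N,U\rangle=0$ gives $-\varepsilon_T\varepsilon_N\kappa\langle T,U\rangle+\varepsilon_N\varepsilon_B\tau\langle B,U\rangle=0$, i.e. $\kappa c'=\tau d$ with $c,d$ constants not both zero.
\end{enumerate}
Since $\kappa\neq0$, we get $d\neq0$ (else $c=0$ and $U=0$), so $\tau/\kappa=c'/d$ is constant.

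\textbf{Constant ratio $\Rightarrow$ helix.} Suppose $\tau=\sigma_0\kappa$ with $\sigma_0$ constant. Set
\[
U=\varepsilon_N\varepsilon_B\,\sigma_0\,T+\varepsilon_T\varepsilon_N\,B,
\]
adjusting the signs so that the coefficients match those found in the first direction. Then
\[
\dot U=\kappa\bigl(\varepsilon_N\varepsilon_B\sigma_0-\varepsilon_T\varepsilon_N\varepsilon_T\varepsilon_B\sigma_0\bigr)N=0
\]
after the sign bookkeeping, so $U$ is a constant vector and $\langle T,U\rangle$ is constant.

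The main obstacle is that the angle between $T$ and $U$ is only defined when $U$ is non-null. We have $\langle U,U\rangle=\varepsilon_T\sigma_0^2+\varepsilon_B$, which can vanish, for example when $T$ is timelike and $\sigma_0=\pm1$. In that degenerate case we follow \cite{Lopez2014} and interpret ``fixed direction'' as allowing a lightlike axis, using $\langle T,U\rangle$ constant as the defining condition. We also check that in each non-null case the constant value of $|\langle T,U\rangle|$ is compatible with the relevant angle formula, e.g. $\cosh\varphi\ge1$ in type (a).

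Finally, the Frenet derivative signs must be verified carefully for each of the three cases (which of $T,N,B$ is timelike). This bookkeeping is the only delicate step; the rest is routine.
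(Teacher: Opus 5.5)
The paper gives no proof of this statement; it quotes it from \cite{CoutoLymberopoulos2020} and \cite[Theorem~2.12]{Lopez2014}. Your route is the standard one. Your forward direction is correct. It never uses that $U$ is non-null, so it also covers lightlike axes, which is how the paper later uses the result (proof of Corollary~\ref{cor:helix_slantL3}).

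\textbf{The converse fails as written.} The failure is exactly in the sign bookkeeping you postpone. You state $\dot B=\varepsilon_T\varepsilon_B\tau N$, which agrees with the paper's $\dot B=-\varepsilon_2\tau N$ because $\varepsilon_T\varepsilon_B=-\varepsilon_N$. With that equation, your vector $U=\varepsilon_N\varepsilon_B\sigma_0T+\varepsilon_T\varepsilon_NB$ gives
\[
\dot U=\varepsilon_N\varepsilon_B\,\sigma_0\kappa\,N+\varepsilon_T\varepsilon_N\,\varepsilon_T\varepsilon_B\,\tau\,N=2\,\varepsilon_N\varepsilon_B\,\sigma_0\kappa\,N\neq0\qquad(\sigma_0\neq0).
\]
The minus sign in your display has no source. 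For instance, with $T$ timelike your $U$ is $\sigma_0T-B$ and $\dot U=2\sigma_0\kappa N$; the other two cases fail the same way.

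The correct axis is $U=\sigma_0T+\varepsilon_NB$. Then $\dot U=(\sigma_0\kappa-\varepsilon_N^2\tau)N=0$, $\langle T,U\rangle=\varepsilon_T\sigma_0$ and $\langle U,U\rangle=\varepsilon_T\sigma_0^2+\varepsilon_B$.

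Separately, your $\dot N$ is inconsistent with your $\dot B$ when $N$ is timelike. Differentiating $\langle N,B\rangle=0$ forces the $B$-coefficient of $\dot N$ to be $\varepsilon_B\tau$, not $\varepsilon_N\varepsilon_B\tau$. In the forward direction this only flips the sign of the constant $c'/d$, so it is harmless there. But ``up to the sign convention'' is hiding an inconsistency, not a choice of convention. Since the converse consists entirely of exhibiting one constant vector, that computation has to be done, not deferred.

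\textbf{The degenerate case.} Your treatment is right, and it is forced rather than optional. Your forward computation shows that any fixed $V$ with $\langle T,V\rangle$ constant satisfies $V\perp N$ and $\kappa\langle T,V\rangle+\tau\langle B,V\rangle=0$. Hence $V$ is a constant multiple of $\sigma_0T+\varepsilon_NB$.

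So when $\varepsilon_T\sigma_0^2+\varepsilon_B=0$, the curve has no non-null axis at all. This happens when $|\sigma_0|=1$ with $T$ timelike, or with $B$ timelike; you only mention the first. In that case the ``if'' direction holds only with the definition the paper actually uses in Corollary~\ref{cor:helix_slantL3}: $\langle T_\beta,U\rangle$ constant for some fixed $U\neq0$. It fails for the literal ``constant angle'' definition, since angles are defined only between non-null vectors.

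\textbf{The compatibility check.} In the non-null case the check you promise is automatic. The $B$-component of $U$ is $\varepsilon_N\neq0$, so $\operatorname{span}\{T,U\}=N^\perp$ is the rectifying plane, which is non-degenerate. The causal characters of $T$, of $U$ and of $N^\perp$ do not change along the curve. So the angle is of one fixed type among (a)--(d), and bounds such as $\cosh\varphi\ge1$ hold automatically for actual vectors. It is constant because $\langle T,U\rangle$ and $\langle U,U\rangle$ are constant.
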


\begin{theorem}[\cite{AliLopez}]\label{slant helix characterization}
Let $\alpha(s)$ be a curve in $\mathbb{L}^3$ parametrized by arc length $s$, with $\kappa_{\alpha}^2-\varepsilon_{2}\tau_{\alpha}^2\neq0$. Then $\alpha$ is a \textit{slant helix} if and only if
\begin{equation}\label{slanthelix_equation}
\dfrac{\kappa_{\alpha}(s)^2}{|\kappa_{\alpha}(s)^2-\varepsilon_{2}.\tau_{\alpha}(s)^2|^{3/2}}\cdot \dfrac{d}{ds}\left(\dfrac{\tau_{\alpha}(s)}{\kappa_{\alpha}(s)}\right)\ \text{is constant}.
\end{equation}

\end{theorem}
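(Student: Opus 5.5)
We prove the characterization in Theorem~\ref{slant helix characterization} by working with the Frenet equations of $\alpha$ in $\mathbb{L}^3$. Write $\langle T,T\rangle=\varepsilon_1$, $\langle N,N\rangle=\varepsilon_2$, $\langle B,B\rangle=\varepsilon_3$, with $\varepsilon_1\varepsilon_2\varepsilon_3=-1$. The equations then read
\[
\dot T=\kappa N,\qquad \dot N=-\varepsilon_1\varepsilon_2\kappa T+\varepsilon_3\tau B\ (\text{up to the sign convention fixed by } \tau=\det/\|\cdot\|^2),\qquad \dot B=\varepsilon_1\tau N\ (\text{similarly}).
\]
The approach is the standard Darboux-vector one: $N$ makes a constant angle with a fixed direction $U$ if and only if $\langle N,U\rangle=c$ is constant. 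For the forward direction, differentiate this relation and use the Frenet equations to express $U$ in the frame. Along the way we normalize $U$ to be non-lightlike, treating the lightlike case by the same algebra with the normalization dropped.

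\textbf{Necessity.} Assume $\langle N,U\rangle=c$ with $U$ constant. Differentiating gives $\langle \dot N,U\rangle=0$, which is a linear relation between $\langle T,U\rangle$ and $\langle B,U\rangle$. It forces
\[
U=\varepsilon_1 a\tau\,T+\varepsilon_2 c\,N+\varepsilon_3 a\kappa\,B
\]
for some function $a$, up to the signs fixed by the $\varepsilon_i$. Next impose $\dot U=0$. The $T$ and $B$ components give $(a\tau)^{\cdot}=0$ and $(a\kappa)^{\cdot}=0$ modulo the $c$-terms, hence ODEs for $a$. The $N$ component gives $\kappa\tau\cdot(\text{stuff})=0$, which is automatic.

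From these relations we eliminate $c$ and $a$. The combination $\varepsilon\tau(a\kappa)^{\cdot}-\kappa(a\tau)^{\cdot}$ yields $a\,(\tau\dot\kappa-\kappa\dot\tau)=\pm c(\kappa^2-\varepsilon_2\tau^2)$. The other combination yields $\frac{d}{ds}\bigl(a^2(\kappa^2-\varepsilon_2\tau^2)\bigr)=0$, which is consistent with $\langle U,U\rangle$ being constant. Hence $a^2|\kappa^2-\varepsilon_2\tau^2|$ is a constant $m$, so $a=\pm\sqrt m\,|\kappa^2-\varepsilon_2\tau^2|^{-1/2}$. Substituting this into the first relation and using $\tau\dot\kappa-\kappa\dot\tau=-\kappa^2\frac{d}{ds}(\tau/\kappa)$, we get that
\[
\frac{\kappa^2}{|\kappa^2-\varepsilon_2\tau^2|^{3/2}}\,\frac{d}{ds}\Bigl(\frac{\tau}{\kappa}\Bigr)=\pm\frac{c}{\sqrt m}
\]
is constant, which is the condition in the statement.

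\textbf{Sufficiency.} Conversely, suppose the quantity in \eqref{slanthelix_equation} equals a constant $\mu$. Set $a=|\kappa^2-\varepsilon_2\tau^2|^{-1/2}$, choose $c=\mu$ (up to sign), and define $U$ by the formula above. A direct differentiation using the Frenet equations and the hypothesis shows $\dot U=0$. Then $\langle N,U\rangle=\varepsilon_2^2c=c$ is constant. To conclude that $N$ makes a constant \emph{angle} with $U$, we divide by $\|U\|$. This is legitimate because $\langle U,U\rangle$ is a constant, computed as $\pm a^2(\kappa^2-\varepsilon_2\tau^2)\cdot\varepsilon+\varepsilon_2c^2$. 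The case $U$ lightlike is handled by the convention that constancy of $\langle N,U\rangle$ defines the slant helix there.

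\textbf{Main obstacle.} The step I expect to be hardest is the sign bookkeeping over the causal cases: which of $T,N,B$ is timelike. This enters both the Frenet equations and the resulting factor $\kappa^2-\varepsilon_2\tau^2$, which has to come out uniformly in terms of $\varepsilon_2$ alone. I would first carry out the computation with symbolic $\varepsilon_i$, verifying that every appearance reduces to $\varepsilon_2$ via $\varepsilon_1\varepsilon_3=-\varepsilon_2$. I would then check it separately in the three cases $\varepsilon_1=-1$, $\varepsilon_2=-1$, $\varepsilon_3=-1$. The hypothesis $\kappa^2-\varepsilon_2\tau^2\neq0$ is used exactly to define $a$ and to divide in the elimination step.
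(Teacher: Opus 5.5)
The paper does not prove this statement. It is quoted from Ali--L\'opez and used only through its statement, in Step~4 of the proof of Proposition~\ref{prop:slant_helix_L3}. Your argument is the standard frame-decomposition proof: write the axis in the Frenet frame, turn $\dot U=0$ into two first-order equations, and eliminate. It does go through. However, two of the formulas you display are actually wrong, not merely ``up to sign'', and the computation only closes once they are corrected.

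\textbf{The Frenet equation for $\dot B$.} With \eqref{eq:frenet} one has $\dot B=-\varepsilon_2\tau N$. Your $\dot B=\varepsilon_1\tau N$ agrees with this only when $\varepsilon_1=-\varepsilon_2$. For $B$ timelike it gives $\frac{d}{ds}\langle B,N\rangle=2\tau\neq0$, so it is not even compatible with the frame being orthonormal.

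\textbf{The decomposition of $U$.} The condition $\langle\dot N,U\rangle=0$ reads $\kappa\langle T,U\rangle+\tau\langle B,U\rangle=0$. So, with $a=-\langle B,U\rangle/\kappa$,
\[
U=\varepsilon_1a\tau\,T+\varepsilon_2c\,N-\varepsilon_3a\kappa\,B .
\]
The minus sign on the $B$-term is forced: with your plus sign, $\langle\dot N,U\rangle=2\varepsilon_3a\kappa\tau$.

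\textbf{The resulting equations.} The cancellation you expect in the $N$-component of $\dot U$ is exactly $\varepsilon_1+\varepsilon_2\varepsilon_3=0$, so it needs both corrected signs. With them, $\dot U=0$ is equivalent to $(a\tau)^{\cdot}=c\kappa$ and $(a\kappa)^{\cdot}=\varepsilon_2c\tau$. Your first elimination must carry coefficient $1$ on both terms:
$\tau(a\kappa)^{\cdot}-\kappa(a\tau)^{\cdot}=a(\tau\dot\kappa-\kappa\dot\tau)=-c(\kappa^2-\varepsilon_2\tau^2)$.
The second is $\varepsilon_2\tau(a\tau)^{\cdot}-\kappa(a\kappa)^{\cdot}=0$; after multiplying by $2a$ this is $\frac{d}{ds}\bigl(a^2(\kappa^2-\varepsilon_2\tau^2)\bigr)=0$, as you say.

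\textbf{Necessity: the case $m=0$.} You divide by $\sqrt m$, so you must exclude $m=0$. That case means $a\equiv0$; then $(a\tau)^{\cdot}=c\kappa$ with $\kappa>0$ forces $c=0$, hence $U=0$, which is excluded. For $m>0$, both $a$ and $\kappa^2-\varepsilon_2\tau^2$ never vanish on the interval, so each has constant sign. That is what makes your $\pm c/\sqrt m$ a genuine constant.

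\textbf{Sufficiency: the right choice of constants.} Take $a=|\kappa^2-\varepsilon_2\tau^2|^{-1/2}$ and $c=\operatorname{sgn}(\kappa^2-\varepsilon_2\tau^2)\,\mu$. A direct computation then gives exactly the two equations above. Hence $U$ is constant, $U\neq0$ because its $B$-component $-\varepsilon_3a\kappa$ never vanishes, and $\langle N,U\rangle=c$.

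\textbf{No normalization or lightlike case is needed.} The working definition, recalled in Corollary~\ref{cor:helix_slantL3}, is constancy of $\langle N,U\rangle$ for a fixed $U\neq0$. Nothing in the elimination uses the causal character of $U$. Indeed $\langle U,U\rangle=\varepsilon_3\operatorname{sgn}(\kappa^2-\varepsilon_2\tau^2)+\varepsilon_2\mu^2$ can vanish.

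\textbf{The torsion sign convention.} Your worry here is immaterial. The substitution $\tau\mapsto-\tau$ only changes the sign of the quantity in \eqref{slanthelix_equation}, as the paper notes in Step~4 of Proposition~\ref{prop:slant_helix_L3}. This holds provided the Frenet equations you use are internally consistent.
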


\subsection{Frenet equations in \texorpdfstring{$\mathbb{L}^3$}{L3}}

In this subsection we recall the Frenet equations of a curve in $\mathbb{L}^3$ parametrized by arc length.

\begin{theorem}[\cite{CoutoLymberopoulos2020,Lopez2014}]Let $\alpha: I \to \mathbb{L}^3$ be a curve parametrized by arc-length $s$, with curvature $\kappa_\alpha>0$ and torsion $\tau_\alpha$. Then its Frenet frame $\{T_\alpha,N_\alpha,B_\alpha\}$ satisfies the Frenet equations \eqref{eq:frenet}, which take the forms listed below according to the causal character of $T_\alpha$, $N_\alpha$ and $B_\alpha$:

\begin{equation}\label{eq:frenet}
\begin{aligned}
{\dot{T}_{\alpha}} &= \kappa_{\alpha}\cdot  N_{\alpha},\\
{\dot{N}_{\alpha}} &=\varepsilon_{3}\cdot \left(\kappa_{\alpha}\cdot T_{\alpha}+\tau_{\alpha}\cdot B_{\alpha}\right),\\
{\dot{B}_{\alpha}} &= -\varepsilon_{2}\tau_{\alpha}\cdot N_{\alpha},
\end{aligned}
\end{equation}
with $\varepsilon_{1}=\langle T_{\alpha},T_{\alpha} \rangle,\ \varepsilon_{2}=\langle N_{\alpha},N_{\alpha} \rangle,\  \varepsilon_{3}=\langle B_{\alpha},B_{\alpha} \rangle$, $N_{\alpha}\wedge B_{\alpha}=-\varepsilon_{2}\cdot T_{\alpha}$,  $B_{\alpha}\wedge T_{\alpha}=-\varepsilon_{1}\cdot N_{\alpha}$.

Conversely, given continuous functions $\kappa>0$ and $\tau$ on $I$, a point $p_0\in\mathbb{L}^3$, $s_0\in I$ and an orthonormal basis $\{T_0,N_0,B_0\}$ with $T_0\wedge N_0=B_0$, there exists a unique curve $\alpha:I\to\mathbb{L}^3$ parametrized by arc length, with curvature $\kappa$ and torsion $\tau$, such that $\alpha(s_0)=p_0$ and its Frenet frame at $s_0$ is $\{T_0,N_0,B_0\}$. If the frame is not prescribed, but only the causal character of $T_\alpha,N_\alpha,B_\alpha$, the curve is unique up to a proper isometry of $\mathbb{L}^3$.

Thus, the Frenet equations are the \textbf{compatibility equations} for the Frenet frame: an orthonormal moving frame $\{T,N,B\}$ with associated scalar functions $\kappa>0$ and $\tau$ is the Frenet frame of an arc-length parametrized curve if and only if it satisfies \eqref{eq:frenet} and its first vector is the derivative of the curve.
\end{theorem}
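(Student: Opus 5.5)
We prove the two halves of the statement separately: first the Frenet equations \eqref{eq:frenet} for a given arc-length curve, and then the existence and uniqueness of a curve with prescribed curvature and torsion. Throughout, we use the relations $\varepsilon_1\varepsilon_2\varepsilon_3=-1$ and $\varepsilon_i=-\varepsilon_j\varepsilon_k$ from Corollary~\ref{condicao dos vaerpsilons}.

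\textbf{Direct part.} Let $\alpha$ be parametrized by arc length. Then $\langle T_\alpha,T_\alpha\rangle=\varepsilon_1$ is constant, so $\langle \dot T_\alpha,T_\alpha\rangle=0$. With the definitions \eqref{N x geral} and \eqref{kappa x geral}, this gives $\dot T_\alpha=\kappa_\alpha N_\alpha$.

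For an orthonormal frame we expand any vector as $X=\sum_i \varepsilon_i\langle X,E_i\rangle E_i$. Differentiating the constant relations $\langle E_i,E_j\rangle=\varepsilon_i\delta_{ij}$ shows that the coefficient matrix is skew-adjoint with respect to the metric. Concretely:
\begin{itemize}
\item $\langle \dot N,N\rangle=0$;
\item $\langle \dot N,T\rangle=-\langle N,\dot T\rangle=-\varepsilon_2\kappa$;
\item $\langle \dot B,B\rangle=0$;
\item $\langle \dot B,T\rangle=-\langle B,\dot T\rangle=0$.
\end{itemize}
So $\dot B$ is a multiple of $N$. We write $\dot B=-\varepsilon_2\tau N$, which defines $\tau$. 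We then check that this $\tau$ agrees with \eqref{tau x geral} by computing $\det(\dot\alpha,\ddot\alpha,\dddot\alpha)$ in the frame, using $\langle u\wedge v,w\rangle=\det$.

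Next, $\langle\dot N,B\rangle=-\langle N,\dot B\rangle=\varepsilon_2^2\tau=\tau$. Hence
\[
\dot N=\varepsilon_1(-\varepsilon_2\kappa)T+\varepsilon_3\tau B .
\]
Since $-\varepsilon_1\varepsilon_2=\varepsilon_3$, this equals $\varepsilon_3(\kappa T+\tau B)$. The identities $N\wedge B=-\varepsilon_2T$ and $B\wedge T=-\varepsilon_1N$ are Corollary~\ref{condicao dos vaerpsilons}(3) applied with $E_1=T$, $E_2=N$.

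\textbf{Converse.} Here we view \eqref{eq:frenet} as a linear ODE $F'=A(s)F$ for the $3\times3$ matrix $F$ whose rows are $T,N,B$. The coefficients are continuous, so there is a unique global solution on $I$ with $F(s_0)=(T_0,N_0,B_0)$.

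The key step is to show that the frame stays orthonormal with the same signature. Let $G(s)$ be the Gram matrix $G_{ij}=\langle E_i,E_j\rangle$. Then $G$ satisfies the linear ODE $G'=AG+GA^{t}$. The constant matrix $\mathrm{diag}(\varepsilon_1,\varepsilon_2,\varepsilon_3)$ is also a solution of this ODE. This holds precisely because $A\,\mathrm{diag}(\varepsilon)$ is antisymmetric, which is the skew-adjointness built into the coefficients $\kappa$, $\varepsilon_3\kappa$, $\varepsilon_3\tau$, $-\varepsilon_2\tau$; it must be verified using $\varepsilon_1\varepsilon_2\varepsilon_3=-1$. By uniqueness of solutions, $G\equiv\mathrm{diag}(\varepsilon)$.

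The determinant of $F$ is continuous and never zero, so orientation is preserved and $T\wedge N=B$ for all $s$.

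We then set
\[
\alpha(s)=p_0+\int_{s_0}^sT .
\]
Since $\|T\|=1$, $\alpha$ is parametrized by arc length, with $\dot T=\kappa N$ and $\kappa>0$. By \eqref{N x geral} its normal is $N$ and its curvature is $\kappa$. Its binormal is $T\wedge N=B$. Comparing $\dot B=-\varepsilon_2\tau N$ with the direct part shows that its torsion is $\tau$.

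Uniqueness follows because any other such curve has a Frenet frame solving the same initial value problem, hence the same $T$, and then the same $\alpha$ by integration.

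\textbf{Unprescribed frame.} If only the causal characters are prescribed, two admissible initial frames with the same signature and positive orientation are related by a linear map preserving $\langle\,,\,\rangle$ and orientation. This map, composed with a translation, is a proper isometry of $\mathbb L^3$. Isometries of this kind carry Frenet frames to Frenet frames and preserve $\kappa$ and $\tau$, so uniqueness reduces to the prescribed case.

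The last sentence of the statement, the compatibility claim, is a restatement of these two parts.

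I expect the main obstacle to be the sign bookkeeping in the Gram-matrix invariance step: checking that $A\,\mathrm{diag}(\varepsilon)$ is antisymmetric for all three causal configurations at once. I will do this once, symbolically, using only $\varepsilon_3=-\varepsilon_1\varepsilon_2$.
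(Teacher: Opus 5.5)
Your argument is correct. The paper gives no proof of this theorem; it quotes it from the two references in the heading. What you propose is the standard proof.

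\begin{itemize}
\item For the direct part you expand in the orthonormal frame.
\item For the converse you use the linear system $F'=AF$ and the invariance of the Gram matrix. That invariance does reduce to the antisymmetry of $A\,\mathrm{diag}(\varepsilon_1,\varepsilon_2,\varepsilon_3)$. Its off-diagonal entries are $\varepsilon_2\kappa$ against $\varepsilon_1\varepsilon_3\kappa=-\varepsilon_2\kappa$, and $\tau$ against $-\tau$.
\end{itemize}

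Two small points deserve a sentence each when you write this up.

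\emph{Regularity.} With $\kappa$ only continuous, the curve $\alpha=p_0+\int T$ is $C^2$, but $\dddot\alpha=\frac{d}{ds}(\kappa N)$ need not exist. So the torsion of the constructed curve can only be read as $-\langle\dot B,N\rangle$, which is what you actually use. Your determinant check against \eqref{tau x geral} applies once $\kappa\in C^1$.

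\emph{Orientation in the compatibility claim.} In the last sentence of the statement, the moving frame must be assumed positively oriented, $T\wedge N=B$. The frame $\{T_\alpha,N_\alpha,-B_\alpha\}$ also satisfies \eqref{eq:frenet}, with $\tau=-\tau_\alpha$, but it is not the Frenet frame. So that claim is not literally a restatement of the two parts without this hypothesis. Your orientation step shows it suffices to assume it at one point: orthonormality gives $T\wedge N=cB$ with $c=\pm1$, and $c=\varepsilon_3\det(T,N,B)$ is continuous.
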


According to Corollary~\ref{condicao dos vaerpsilons}, we have 3 cases to consider:

\begin{enumerate}
    \item $T_{\alpha}$  timelike:

 $\varepsilon_{1}=-1,\ \varepsilon_{2}=1,\  \varepsilon_{3}=1$,\
     $T_{\alpha}\wedge N_{\alpha}=B_{\alpha}$\ ,\ $N_{\alpha}\wedge B_{\alpha}=-T_{\alpha}$\ ,\  $B_{\alpha}\wedge T_{\alpha}=N_{\alpha}$,
\begin{equation}\label{eq:frenet 1}
\begin{aligned}
{\dot{T}_{\alpha}} &= \kappa_{\alpha} N_{\alpha},\\
{\dot{N}_{\alpha}} &=\kappa_{\alpha} T_{\alpha} + \tau_{\alpha} B_{\alpha},\\
{\dot{B}_{\alpha}} &= -\tau_{\alpha}N_{\alpha}.
\end{aligned}
\end{equation}

\item $N_{\alpha}$ timelike:

$\varepsilon_{1}=1,\ \varepsilon_{2}=-1,\  \varepsilon_{3}=1$,\
     $T_{\alpha}\wedge N_{\alpha}=B_{\alpha}$,\ $N_{\alpha}\wedge B_{\alpha}=T_{\alpha}$,\  $B_{\alpha}\wedge T_{\alpha}=-N_{\alpha}$,
\begin{equation}\label{eq:frenet 2}
\begin{aligned}
{\dot{T}_{\alpha}} &= \kappa_{\alpha} N_{\alpha},\\
{\dot{N}_{\alpha}} &= \kappa_{\alpha} T_{\alpha} + \tau_{\alpha} B_{\alpha},\\
{\dot{B}_{\alpha}} &= \tau_{\alpha} N_{\alpha}.
\end{aligned}
\end{equation}

\item $B_{\alpha}$ timelike:

$\varepsilon_{1}=1,\ \varepsilon_{2}=1,\  \varepsilon_{3}=-1$,\
     $T_{\alpha}\wedge N_{\alpha}=B_{\alpha}$,\ $N_{\alpha}\wedge B_{\alpha}=-T_{\alpha}$,\  $B_{\alpha}\wedge T_{\alpha}=-N_{\alpha}$,
\begin{equation}\label{eq:frenet 3}
\begin{aligned}
  {\dot{T}_{\alpha}} &= \kappa_{\alpha} N_{\alpha},\\
{\dot{N}_{\alpha}} &= -\kappa_{\alpha} T_{\alpha} - \tau_{\alpha} B_{\alpha},\\
{\dot{B}_{\alpha}} &= -\tau_{\alpha} N_{\alpha}.
\end{aligned}
\end{equation}
\end{enumerate}

\subsection{Frenet apparatus in spherical coordinates with respect to a generic parameter}\label{sec:angular}

This subsection details the Frenet frame of a curve in $\mathbb{L}^3$ in terms of the angular coordinates of its tangent vector, starting from its arc-length parametrization and then considering a general parameter $t$, according to the causal character of $T$, $N$ and $B$.

Let $\alpha: I \subset \mathbb{R} \to \mathbb{L}^3$ be a smooth unit-speed curve with arc-length parameter $s$ and $\langle\dot{T}_\alpha,\dot{T}_\alpha\rangle\neq 0$. Since its unit tangent vector field $T_\alpha(s) = \dot{\alpha}(s)$ inherently satisfies $\langle T_\alpha(s),T_\alpha(s)\rangle = -1$ for every $s \in I$ if $T_{\alpha}$ is timelike, or $\langle T_\alpha(s),T_\alpha(s)\rangle = 1$ if $T_{\alpha}$ is spacelike, its trajectory is naturally constrained to the two-sheeted hyperboloid $\mathbb{H}^2\cup(-\mathbb{H}^2)$, where $\mathbb{H}^2=\{x\in\mathbb{L}^3:\langle x,x\rangle=-1,\ x_3>0\}$ is the hyperbolic plane (upper sheet), or to the de Sitter space $\mathbb{S}^2_1=\{x\in\mathbb{L}^3:\langle x,x\rangle=1\}$, respectively. Consequently, we can seamlessly parametrize $T_\alpha(s)$ in terms of two angular functions as follows:

\begin{equation}\label{Talpha1}
T_{\alpha}(s)=(\cos\theta(s)\tan\phi(s),\ \sin\theta(s)\tan\phi(s),\ \sec\phi(s)) \qquad (T_{\alpha} \text{ timelike}),
\end{equation}
where $\theta(s)\in[0,2\pi)$ and $\phi(s) \in \left(-\dfrac{\pi}{2},\dfrac{\pi}{2}\right)\cup\left(\dfrac{\pi}{2},\dfrac{3\pi}{2}\right)$, and
\begin{equation}\label{Talpha2}
T_{\alpha}(s)=(\cos\theta(s)\cosh\phi(s),\ \sin\theta(s)\cosh\phi(s),\ \sinh\phi(s)) \qquad (T_{\alpha} \text{ spacelike}),
\end{equation}
where $\theta(s)\in[0,2\pi)$ and $\phi(s)\in\mathbb{R}$, being $\theta$ and $\phi$ differentiable real-valued functions.

\begin{proposition}\label{thm:arbitrary_param}
Let $\beta(t) = \alpha(s(t))$ be a smooth, orientation-preserving reparametrization of $\alpha$ satisfying $\dfrac{ds}{dt}>0$. In order to express the Frenet apparatus explicitly in terms of the general parameter $t$, we define, for convenience, the composite angular functions as
\begin{equation}\label{eq:ThetaPhi}
\Theta(t) = \theta(s(t)), \qquad \Phi(t) = \phi(s(t)).
\end{equation}
Throughout the remainder of the text a prime ``\ $'$\ '' denotes differentiation with respect to $t$. The Frenet apparatus of $\beta$ is given by:

\begin{enumerate}
\item Case $T_\beta$ \textbf{timelike}:
\begin{equation}\label{eq:deriv_comp_1}
\beta'(t) = \frac{ds}{dt}\cdot \big( \cos\Theta\tan\Phi,\; \sin\Theta\tan\Phi,\; \sec\Phi \big),
\end{equation}
\begin{align}
T_\beta(t) &= \big( \cos\Theta\tan\Phi,\; \sin\Theta\tan\Phi,\; \sec\Phi \big), \label{eq:T_x_1}\\
N_\beta(t) &= \frac{\big( -\Theta'\sin\Theta\tan\Phi + \Phi'\cos\Theta\sec^2\Phi,\; \Theta'\cos\Theta\tan\Phi + \Phi'\sin\Theta\sec^2\Phi,\; \Phi'\sec\Phi\tan\Phi \big)}{\sqrt{\Theta'^2\tan^2\Phi + \Phi'^2\sec^2\Phi}}, \label{eq:N_x_1}\\
B_\beta(t) &= \frac{\big( -\sec\Phi\,(\Phi'\sin\Theta + \Theta'\tan\Phi\cos\Theta),\; \sec\Phi\,(\Phi'\cos\Theta - \Theta'\tan\Phi\sin\Theta),\; -\Theta'\tan^2\Phi \big)}{\sqrt{\Theta'^2\tan^2\Phi + \Phi'^2\sec^2\Phi}}, \label{eq:B_x_1}\\
\kappa_\beta(t) &= \frac{1}{\dfrac{ds}{dt}} \sqrt{\Theta'^2\tan^2\Phi + \Phi'^2\sec^2\Phi}, \label{eq:kappa_x_1}\\
\tau_\beta(t) &= \frac{1}{\dfrac{ds}{dt}} \left\{ \Theta'\sec\Phi- \left[ \arctan\left( \frac{\Phi^{\prime}}{\Theta^{\prime}\sin\Phi} \right) \right]^{{\prime}} \right\}. \label{eq:tau_x_1}
\end{align}

\item Case $N_\beta$ \textbf{timelike}:
\begin{equation}\label{eq:deriv_comp_2}
\beta'(t) = \frac{ds}{dt}\cdot \big( \cos\Theta\cosh\Phi,\; \sin\Theta\cosh\Phi,\; \sinh\Phi \big),
\end{equation}
\begin{align}
T_\beta(t) &= \big( \cos\Theta\cosh\Phi,\; \sin\Theta\cosh\Phi,\; \sinh\Phi \big), \label{eq:T_x_2}\\
N_\beta(t) &= \frac{\big( -\Theta'\sin\Theta\cosh\Phi + \Phi'\cos\Theta\sinh\Phi,\; \Theta'\cos\Theta\cosh\Phi + \Phi'\sin\Theta\sinh\Phi,\; \Phi'\cosh\Phi \big)}{\sqrt{\Phi'^2 - \Theta'^2\cosh^2\Phi}}, \label{eq:N_x_2}\\
B_\beta(t) &= \frac{\big( \Phi'\sin\Theta - \Theta'\cos\Theta\sinh\Phi\cosh\Phi,\; -\Phi'\cos\Theta - \Theta'\sin\Theta\sinh\Phi\cosh\Phi,\; -\Theta'\cosh^2\Phi \big)}{\sqrt{\Phi'^2 - \Theta'^2\cosh^2\Phi}}, \label{eq:B_x_2}\\
\kappa_\beta(t) &= \frac{1}{\dfrac{ds}{dt}} \sqrt{\Phi'^2 - \Theta'^2\cosh^2\Phi}, \label{eq:kappa_x_2}\\
\tau_\beta(t) &= -\frac{1}{\dfrac{ds}{dt}} \left\{ \Theta'\sinh\Phi+ \left[ \operatorname{arctanh}\left( \frac{\Theta^{\prime}\cosh\Phi}{\Phi^{\prime}} \right) \right]^{{\prime}} \right\}. \label{eq:tau_x_2}
\end{align}

\item Case $B_\beta$ \textbf{timelike}:
\begin{equation}\label{eq:deriv_comp_3}
\beta'(t) = \frac{ds}{dt}\cdot \big( \cos\Theta\cosh\Phi,\; \sin\Theta\cosh\Phi,\; \sinh\Phi \big),
\end{equation}
\begin{align}
T_\beta(t) &= \big( \cos\Theta\cosh\Phi,\; \sin\Theta\cosh\Phi,\; \sinh\Phi \big), \label{eq:T_x_3}\\
N_\beta(t) &= \frac{\big( -\Theta'\sin\Theta\cosh\Phi + \Phi'\cos\Theta\sinh\Phi,\; \Theta'\cos\Theta\cosh\Phi + \Phi'\sin\Theta\sinh\Phi,\; \Phi'\cosh\Phi \big)}{\sqrt{\Theta'^2\cosh^2\Phi - \Phi'^2}}, \label{eq:N_x_3}\\
B_\beta(t) &= \frac{\big( \Phi'\sin\Theta - \Theta'\cos\Theta\sinh\Phi\cosh\Phi,\; -\Phi'\cos\Theta - \Theta'\sin\Theta\sinh\Phi\cosh\Phi,\; -\Theta'\cosh^2\Phi \big)}{\sqrt{\Theta'^2\cosh^2\Phi - \Phi'^2}}, \label{eq:B_x_3}\\
\kappa_\beta(t) &= \frac{1}{\dfrac{ds}{dt}} \sqrt{\Theta'^2\cosh^2\Phi - \Phi'^2}, \label{eq:kappa_x_3}\\
\tau_\beta(t) &= \frac{1}{\dfrac{ds}{dt}} \left\{ \Theta'\sinh\Phi+ \left[ \operatorname{arctanh}\left( \frac{\Phi^{\prime}}{\Theta^{\prime}\cosh\Phi} \right) \right]^{{\prime}} \right\}. \label{eq:tau_x_3}
\end{align}
\end{enumerate}
\end{proposition}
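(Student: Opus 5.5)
The plan is a direct computation from the definitions in Definition~\ref{definicao inicial aparato de frenet}, done separately for each causal case. Throughout we use the invariance of the apparatus under orientation-preserving reparametrization, \eqref{tangente at t}--\eqref{torsion at t}. Since $\beta(t)=\alpha(s(t))$, the chain rule gives $\beta'(t)=\frac{ds}{dt}\,T_\alpha(s(t))$. Substituting \eqref{Talpha1} or \eqref{Talpha2} with $\Theta,\Phi$ from \eqref{eq:ThetaPhi} yields \eqref{eq:deriv_comp_1}, \eqref{eq:deriv_comp_2} and \eqref{eq:deriv_comp_3}. Because $T$ is already unit, $T_\beta$ is the angular vector itself, which settles the tangent formulas.

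For the normal, we use $N_\beta=T_\beta'/\|T_\beta'\|$. We first differentiate the angular expression of $T_\beta$ componentwise to get the numerators in \eqref{eq:N_x_1}, \eqref{eq:N_x_2} and \eqref{eq:N_x_3}. We then compute $\langle T_\beta',T_\beta'\rangle$.
\begin{itemize}
\item In the timelike case, $\langle T_\beta',T_\beta'\rangle=\Theta'^2\tan^2\Phi+\Phi'^2\sec^4\Phi-\Phi'^2\sec^2\Phi\tan^2\Phi=\Theta'^2\tan^2\Phi+\Phi'^2\sec^2\Phi$.
\item In the spacelike cases, $\langle T_\beta',T_\beta'\rangle=\Theta'^2\cosh^2\Phi-\Phi'^2$.
\end{itemize}
The sign of this quantity decides whether $N_\beta$ is spacelike ($B_\beta$ timelike) or timelike ($N_\beta$ timelike), which explains the two radicands in the $N_\beta$ and $B_\beta$ formulas.

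Since $\dot T=\kappa N$, we have $T_\beta'=\frac{ds}{dt}\kappa_\beta N_\beta$, and so $\kappa_\beta=\|T_\beta'\|/\frac{ds}{dt}$. This gives \eqref{eq:kappa_x_1}, \eqref{eq:kappa_x_2} and \eqref{eq:kappa_x_3}. The binormal $B_\beta=T_\beta\wedge N_\beta$ is then computed with the determinant formula for $\wedge$, using the $-\mathbf{k}$ convention. Identities such as $\sec^2\Phi-\tan^2\Phi=1$ and $\cosh^2\Phi-\sinh^2\Phi=1$ simplify each component to the stated forms.

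The torsion is the main obstacle, because the formulas are written with a derivative of an $\arctan$ or $\operatorname{arctanh}$. Rather than using the triple-product formula \eqref{tau x geral} directly, we will use the Frenet equation for $\dot B$ from \eqref{eq:frenet 1}--\eqref{eq:frenet 3}. This gives $\tau_\beta=-\varepsilon_2\langle B_\beta',N_\beta\rangle\,\varepsilon_2/\frac{ds}{dt}$ with the appropriate signs. Equivalently, $\tau_\beta\frac{ds}{dt}=\varepsilon_3\langle N_\beta',B_\beta\rangle$.

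To organize this computation, we introduce an adapted orthonormal frame along $T$. In the spacelike case it is $e_1=T$, $e_2=(-\sin\Theta,\cos\Theta,0)$ and $e_3=T\wedge e_2$. We record how $e_2'$ and $e_3'$ are expressed in this frame; the coefficients involve $\Theta'\sinh\Phi$, $\Theta'\cosh\Phi$ and $\Phi'$. We then write $N_\beta$ as a unit combination of $e_2,e_3$: hyperbolic, $N=\cosh\psi\,e_2+\sinh\psi\,e_3$ or with the roles swapped, when the plane $\{e_2,e_3\}$ is timelike. Here $\tanh\psi=\Phi'/(\Theta'\cosh\Phi)$ or its reciprocal, according to the case. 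Differentiating, the $\psi'$ term supplies $[\operatorname{arctanh}(\cdot)]'$, and the rotation of the frame supplies $\Theta'\sinh\Phi$. This yields \eqref{eq:tau_x_2} and \eqref{eq:tau_x_3}.

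In the timelike case we proceed the same way, with $e_2=(-\sin\Theta,\cos\Theta,0)$ and $e_3=(\cos\Theta\sec\Phi,\sin\Theta\sec\Phi,\tan\Phi)$, which span a spacelike plane. Now $N=\cos\psi\,e_2+\sin\psi\,e_3$ with $\tan\psi=\Phi'/(\Theta'\sin\Phi)$ up to a common factor, and the frame rotation contributes $\Theta'\sec\Phi$. This gives \eqref{eq:tau_x_1}.

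The delicate points are the consistent tracking of the signs $\varepsilon_i$ and of the orientation $B=T\wedge N$. We will check these against a simple helix in each case.
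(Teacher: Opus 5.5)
Your proposal is correct. For $T_\beta$, $N_\beta$, $B_\beta$ and $\kappa_\beta$ it is the same direct computation as the paper's; the real difference is in the torsion.

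\textbf{How the paper does it.} It first works with the unit-speed curve $\alpha(s)$. It obtains $\tau_\alpha=\dot\theta\sec\phi-\dot\psi$ (and the analogues in the other cases) by differentiating $B_\alpha$ and taking $-\langle\dot B_\alpha,N_\alpha\rangle$. Only then does it pass to $t$ via $\Theta'=\dot\theta\,\frac{ds}{dt}$, $\Phi'=\dot\phi\,\frac{ds}{dt}$. This uses the fact that the argument of the $\arctan$ or $\operatorname{arctanh}$ is homogeneous of degree zero in $(\Theta',\Phi')$.

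\textbf{How you do it, and that it works.} You work in $t$ from the start and replace the brute-force differentiation with the adapted frame. This does go through.
\begin{itemize}
\item In the timelike case, $e_2'=\Theta'(\tan\Phi\,T-\sec\Phi\,e_3)$ and $e_3'=\Theta'\sec\Phi\,e_2+\Phi'\sec\Phi\,T$.
\item In the spacelike cases, put $f_3=-T\wedge e_2=\partial_\Phi T=(\cos\Theta\sinh\Phi,\sin\Theta\sinh\Phi,\cosh\Phi)$. Then $e_2'=\Theta'(-\cosh\Phi\,T+\sinh\Phi\,f_3)$ and $f_3'=\Theta'\sinh\Phi\,e_2+\Phi'\,T$.
\end{itemize}
Writing $N_\beta$ as a rotation or boost of the normal-plane frame and computing $\langle N_\beta',B_\beta\rangle$ gives exactly \eqref{eq:tau_x_1}--\eqref{eq:tau_x_3}.

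\textbf{What each route buys.} Your computation is essentially a Lorentzian Liouville formula for the geodesic curvature of the tangent indicatrix in the orthogonal coordinates $(\Theta,\Phi)$. So it explains the shape ``frame term plus derivative of an angle'' and why $\tau_\beta/\kappa_\beta$ is that geodesic curvature, cf.\ \eqref{geodesic curvature and quotient L3}. It also gives something concrete. In Case~1, take $\psi$ to be a continuous polar angle of $(\Theta'\tan\Phi,\Phi'\sec\Phi)$. Then $\tau_\beta\frac{ds}{dt}=\Theta'\sec\Phi-\psi'$ holds even where $\Theta'=0$, a fact the paper only asserts in passing in the proof of Theorem~\ref{thm:general_mannheim}. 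The paper's route is shorter to write, but it leaves the key torsion computation as ``we obtain''.

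\textbf{Two sign points to fix.}
\begin{itemize}
\item Since $\dot N=\varepsilon_3(\kappa T+\tau B)$ and $\langle B,B\rangle=\varepsilon_3$, one has $\tau_\beta\frac{ds}{dt}=\langle N_\beta',B_\beta\rangle$ in every case, with no factor $\varepsilon_3$. Your ``equivalent'' formula $\varepsilon_3\langle N_\beta',B_\beta\rangle$ would give the negative of \eqref{eq:tau_x_3} when $B_\beta$ is timelike. Your first formula, $-\langle B_\beta',N_\beta\rangle/\frac{ds}{dt}$, is the right one.
\item With your choice $e_3=T\wedge e_2=-\partial_\Phi T$ in the spacelike cases, the hyperbolic angle picks up a minus sign: $\tanh\psi=-\Phi'/(\Theta'\cosh\Phi)$ in Case~3 and $\tanh\psi=-\Theta'\cosh\Phi/\Phi'$ in Case~2. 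In addition, $N_\beta$ carries an overall sign determined by $\operatorname{sgn}\Theta'$ (Case~3) or $\operatorname{sgn}\Phi'$ (Case~2). Neither changes the result, since $\operatorname{arctanh}$ is odd and $(N,B)\mapsto(-N,-B)$ leaves $\langle N',B\rangle$ unchanged. They must, however, be carried consistently.
\end{itemize}
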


\begin{proof} We prove the statements for each case separately.

\noindent\textbf{Case 1: $T_\beta$ timelike.} Let $\alpha(s)$ be a curve parametrized by arc-length. Denoting its tangent vector by \eqref{Talpha1}, we obtain
$$
\dot{T}_{\alpha}
= \bigl( -\dot{\theta}\sin\theta\tan\phi + \dot{\phi}\cos\theta\sec^2\phi,\; \dot{\theta}\cos\theta\tan\phi + \dot{\phi}\sin\theta\sec^2\phi,\; \dot{\phi}\sec\phi\tan\phi \bigr).
$$

Thus, by definition and simple calculations, we obtain the curvature, the normal vector and the binormal vector given by the equations:

\begin{equation}\label{curvaturaalpha1}
\kappa_{\alpha}(s) = \sqrt{\dot{\theta}^2\tan^2\phi + \dot{\phi}^2\sec^2\phi},
\end{equation}

\begin{equation}\label{normalalpha1}
\begin{split}
N_{\alpha}(s) ={}& \dfrac{1}{\sqrt{\dot{\theta}^2\tan^2\phi + \dot{\phi}^2\sec^2\phi}} \Big( -\dot{\theta}\sin\theta\tan\phi + \dot{\phi}\cos\theta\sec^2\phi,\\
&\ \dot{\theta}\cos\theta\tan\phi + \dot{\phi}\sin\theta\sec^2\phi,\; \dot{\phi}\sec\phi\tan\phi \Big),
\end{split}
\end{equation}

\begin{equation}\label{binormalalpha1}
\begin{split}
B_{\alpha}(s) ={}& \dfrac{1}{\sqrt{\dot{\theta}^2\tan^2\phi + \dot{\phi}^2\sec^2\phi}} \Big( -\sec\phi\,(\dot{\phi}\sin\theta + \dot{\theta}\tan\phi\cos\theta),\\
&\ \sec\phi\,(\dot{\phi}\cos\theta - \dot{\theta}\tan\phi\sin\theta),\; -\dot{\theta}\tan^2\phi \Big).
\end{split}
\end{equation}

Moreover, since $\tau_{\alpha}$ = $-\langle \dot{B}_{\alpha}\,, N_{\alpha}\rangle$ (or using the third coordinate of third Frenet Equation \eqref{eq:frenet 1}), we obtain

\begin{equation}\label{taualpha1}
\tau_{\alpha} = \dot{\theta}\sec\phi -\dot{\psi}, \ \ \mbox{with} \ \ \psi  =  \arctan\left( \frac{\dot{\phi}}{\dot{\theta}\sin\phi} \right).
\end{equation}

Since $\beta(t) = \alpha(s(t))$, by the chain rule we have

$$\beta'(t)= \dot{\alpha}(s(t))\,\,\dfrac{ds}{dt}= T_{\alpha}(s(t))\,\,\dfrac{ds}{dt}.$$

Thus, from equation \eqref{Talpha1}, we get \eqref{eq:deriv_comp_1}.

From equation \eqref{eq:ThetaPhi}, we obtain
$$\Theta^{\prime}(t)=\dot{\theta}(s(t)).\dfrac{ds}{dt}\ ,\  \Phi^{\prime}(t)=\dot{\phi}(s(t)).\dfrac{ds}{dt}.$$

Thus, from equation \eqref{curvaturaalpha1}
$$\kappa_\beta(t) =\dfrac{1}{\dfrac{ds}{dt}} \sqrt{\left[\dot{\theta}(s(t)).\dfrac{ds}{dt}\right]^2\tan^2\phi(s(t))+\left[\dot{\phi}(s(t)).\dfrac{ds}{dt}\right]^2\sec^2\phi(s(t))},$$
hence we obtain \eqref{eq:kappa_x_1}, and consequently \eqref{eq:N_x_1} and \eqref{eq:B_x_1}.

Finally,
$$\dfrac{1}{\dfrac{ds}{dt}}\cdot\Theta'(t)\sec\Phi(t)=\dfrac{1}{\dfrac{ds}{dt}}\cdot \dfrac{ds}{dt}(t)\cdot \dot{\theta}(s(t))\cdot \sec\phi(s(t))= \dot{\theta}(s(t))\cdot \sec\phi(s(t)).$$

And consider

$$f(s)=\arctan\left(\frac{\dot{\phi}(s)}{\dot{\theta}(s)\cdot\sin\phi(s)}\right).$$

Then
$$\dfrac{d}{dt}\left[ \arctan\left( \frac{\Phi'(t)}{\Theta'(t)\cdot\sin\Phi(t)} \right) \right]=\dfrac{d}{dt}\left[ \arctan\left( \frac{\dot{\phi}(s(t))\cdot \dfrac{ds}{dt}(t)}{\dot{\theta}(s(t))\cdot \dfrac{ds}{dt}(t)\cdot\sin\phi(s(t))} \right) \right]=$$
$$=\dfrac{d}{dt}\left[ \arctan\left( \frac{\dot{\phi}(s(t))}{\dot{\theta}(s(t))\cdot\sin\phi(s(t))} \right) \right]=\dfrac{d}{dt}\left[ f(s(t))\right]=\dot{f}(s(t))\cdot \dfrac{ds}{dt}(t).$$
Then
$$\dfrac{1}{\dfrac{ds}{dt}}\left\{\Theta'(t)\sec\Phi(t) -\dfrac{d}{dt}\left[ \arctan\left( \frac{\Phi'(t)}{\Theta'(t)\cdot\sin\Phi(t)} \right) \right]\right\}$$
$$=\dot{\theta}(s(t))\sec\phi(s(t)) -\dot{f}(s(t))=\tau_{\alpha}(s(t))=\tau_{\beta}(t),$$
which proves \eqref{eq:tau_x_1}.

\noindent\textbf{Case 2: $N_\beta$ timelike.} Let $\alpha(s)$ be a curve parametrized by arc-length. Denoting its tangent vector by \eqref{Talpha2}, we obtain
$$
\dot{T}_{\alpha}
= \bigl( -\dot{\theta}\sin\theta\cosh\phi + \dot{\phi}\cos\theta\sinh\phi,\; \dot{\theta}\cos\theta\cosh\phi + \dot{\phi}\sin\theta\sinh\phi,\; \dot{\phi}\cosh\phi \bigr).
$$

Thus, by definition and simple calculations, we obtain the curvature, the normal vector and the binormal vector given by the equations:

\begin{equation}\label{curvaturaalpha2}
\kappa_{\alpha}(s) = \sqrt{\dot{\phi}^2 - \dot{\theta}^2\cosh^2\phi},
\end{equation}

\begin{equation}\label{normalalpha2}
N_{\alpha}(s) = \dfrac{1}{\sqrt{\dot{\phi}^2 - \dot{\theta}^2\cosh^2\phi}} \Big( -\dot{\theta}\sin\theta\cosh\phi + \dot{\phi}\cos\theta\sinh\phi,\; \dot{\theta}\cos\theta\cosh\phi + \dot{\phi}\sin\theta\sinh\phi,\; \dot{\phi}\cosh\phi \Big),
\end{equation}

\begin{equation}\label{binormalalpha2}
B_{\alpha}(s) = \dfrac{1}{\sqrt{\dot{\phi}^2 - \dot{\theta}^2\cosh^2\phi}} \Big( \dot{\phi}\sin\theta - \dot{\theta}\cos\theta\sinh\phi\cosh\phi,\; -\dot{\phi}\cos\theta - \dot{\theta}\sin\theta\sinh\phi\cosh\phi,\; -\dot{\theta}\cosh^2\phi \Big).
\end{equation}

Moreover, since $\tau_{\alpha}$ = $-\langle \dot{B}_{\alpha}\,, N_{\alpha}\rangle$ (or using the third coordinate of third Frenet Equation \eqref{eq:frenet 2}), we obtain

\begin{equation}\label{taualpha2}
\tau_{\alpha} = -\dot{\theta}\sinh\phi -\dot{\psi}, \ \ \mbox{with} \ \ \psi  =  \operatorname{arctanh}\left( \frac{\dot{\theta}\cosh\phi}{\dot{\phi}} \right).
\end{equation}

Since $\beta(t) = \alpha(s(t))$, by the chain rule we have

$$\beta'(t)= \dot{\alpha}(s(t))\,\,\dfrac{ds}{dt}= T_{\alpha}(s(t))\,\,\dfrac{ds}{dt}.$$

Thus, from equation \eqref{Talpha2}, we get \eqref{eq:deriv_comp_2}.

From equation \eqref{eq:ThetaPhi}, we obtain
$$\Theta^{\prime}(t)=\dot{\theta}(s(t)).\dfrac{ds}{dt}\ ,\  \Phi^{\prime}(t)=\dot{\phi}(s(t)).\dfrac{ds}{dt}.$$

Thus, from equation \eqref{curvaturaalpha2}
$$\kappa_\beta(t) =\dfrac{1}{\dfrac{ds}{dt}} \sqrt{\left[\dot{\phi}(s(t)).\dfrac{ds}{dt}\right]^2-\left[\dot{\theta}(s(t)).\dfrac{ds}{dt}\right]^2\cosh^2\phi(s(t))},$$
hence we obtain \eqref{eq:kappa_x_2}, and consequently \eqref{eq:N_x_2} and \eqref{eq:B_x_2}.

Finally,
$$\dfrac{1}{\dfrac{ds}{dt}}\cdot\Theta'(t)\sinh\Phi(t)=\dfrac{1}{\dfrac{ds}{dt}}\cdot \dfrac{ds}{dt}(t)\cdot \dot{\theta}(s(t))\cdot \sinh\phi(s(t))= \dot{\theta}(s(t))\cdot \sinh\phi(s(t)).$$

And consider

$$f(s)=\operatorname{arctanh}\left(\frac{\dot{\theta}(s)\cdot\cosh\phi(s)}{\dot{\phi}(s)}\right).$$

Then
$$\dfrac{d}{dt}\left[ \operatorname{arctanh}\left( \frac{\Theta'(t)\cdot\cosh\Phi(t)}{\Phi'(t)} \right) \right]=\dfrac{d}{dt}\left[ \operatorname{arctanh}\left( \frac{\dot{\theta}(s(t))\cdot \dfrac{ds}{dt}(t)\cdot\cosh\phi(s(t))}{\dot{\phi}(s(t))\cdot \dfrac{ds}{dt}(t)} \right) \right]=$$
$$=\dfrac{d}{dt}\left[ \operatorname{arctanh}\left( \frac{\dot{\theta}(s(t))\cdot\cosh\phi(s(t))}{\dot{\phi}(s(t))} \right) \right]=\dfrac{d}{dt}\left[ f(s(t))\right]=\dot{f}(s(t))\cdot \dfrac{ds}{dt}(t).$$
Then
$$-\dfrac{1}{\dfrac{ds}{dt}}\left\{\Theta'(t)\sinh\Phi(t) +\dfrac{d}{dt}\left[ \operatorname{arctanh}\left( \frac{\Theta'(t)\cdot\cosh\Phi(t)}{\Phi'(t)} \right) \right]\right\}$$
$$=-\dot{\theta}(s(t))\sinh\phi(s(t)) -\dot{f}(s(t))=\tau_{\alpha}(s(t))=\tau_{\beta}(t),$$
which proves \eqref{eq:tau_x_2}.

\noindent\textbf{Case 3: $B_\beta$ timelike.} Let $\alpha(s)$ be a curve parametrized by arc-length. Denoting its tangent vector by \eqref{Talpha2}, we obtain
$$
\dot{T}_{\alpha}
= \bigl( -\dot{\theta}\sin\theta\cosh\phi + \dot{\phi}\cos\theta\sinh\phi,\; \dot{\theta}\cos\theta\cosh\phi + \dot{\phi}\sin\theta\sinh\phi,\; \dot{\phi}\cosh\phi \bigr).
$$

Thus, by definition and simple calculations, we obtain the curvature, the normal vector and the binormal vector given by the equations:

\begin{equation}\label{curvaturaalpha3}
\kappa_{\alpha}(s) = \sqrt{\dot{\theta}^2\cosh^2\phi - \dot{\phi}^2},
\end{equation}

\begin{equation}\label{normalalpha3}
N_{\alpha}(s) = \dfrac{1}{\sqrt{\dot{\theta}^2\cosh^2\phi - \dot{\phi}^2}} \Big( -\dot{\theta}\sin\theta\cosh\phi + \dot{\phi}\cos\theta\sinh\phi,\; \dot{\theta}\cos\theta\cosh\phi + \dot{\phi}\sin\theta\sinh\phi,\; \dot{\phi}\cosh\phi \Big),
\end{equation}

\begin{equation}\label{binormalalpha3}
B_{\alpha}(s) = \dfrac{1}{\sqrt{\dot{\theta}^2\cosh^2\phi - \dot{\phi}^2}} \Big( \dot{\phi}\sin\theta - \dot{\theta}\cos\theta\sinh\phi\cosh\phi,\; -\dot{\phi}\cos\theta - \dot{\theta}\sin\theta\sinh\phi\cosh\phi,\; -\dot{\theta}\cosh^2\phi \Big).
\end{equation}

Moreover, since $\tau_{\alpha}$ = $-\langle \dot{B}_{\alpha}\,, N_{\alpha}\rangle$ (or using the third coordinate of third Frenet Equation \eqref{eq:frenet 3}), we obtain

\begin{equation}\label{taualpha3}
\tau_{\alpha} = \dot{\theta}\sinh\phi +\dot{\psi}, \ \ \mbox{with} \ \ \psi  =  \operatorname{arctanh}\left( \frac{\dot{\phi}}{\dot{\theta}\cosh\phi} \right).
\end{equation}

Since $\beta(t) = \alpha(s(t))$, by the chain rule we have

$$\beta'(t)= \dot{\alpha}(s(t))\,\,\dfrac{ds}{dt}= T_{\alpha}(s(t))\,\,\dfrac{ds}{dt}.$$

Thus, from equation \eqref{Talpha2}, we get \eqref{eq:deriv_comp_3}.

From equation \eqref{eq:ThetaPhi}, we obtain
$$\Theta^{\prime}(t)=\dot{\theta}(s(t)).\dfrac{ds}{dt}\ ,\  \Phi^{\prime}(t)=\dot{\phi}(s(t)).\dfrac{ds}{dt}.$$

Thus, from equation \eqref{curvaturaalpha3}
$$\kappa_\beta(t) =\dfrac{1}{\dfrac{ds}{dt}} \sqrt{\left[\dot{\theta}(s(t)).\dfrac{ds}{dt}\right]^2\cosh^2\phi(s(t))-\left[\dot{\phi}(s(t)).\dfrac{ds}{dt}\right]^2},$$
hence we obtain \eqref{eq:kappa_x_3}, and consequently \eqref{eq:N_x_3} and \eqref{eq:B_x_3}.

Finally,
$$\dfrac{1}{\dfrac{ds}{dt}}\cdot\Theta'(t)\sinh\Phi(t)=\dfrac{1}{\dfrac{ds}{dt}}\cdot \dfrac{ds}{dt}(t)\cdot \dot{\theta}(s(t))\cdot \sinh\phi(s(t))= \dot{\theta}(s(t))\cdot \sinh\phi(s(t)).$$

And consider

$$f(s)=\operatorname{arctanh}\left(\frac{\dot{\phi}(s)}{\dot{\theta}(s)\cdot\cosh\phi(s)}\right).$$

Then
$$\dfrac{d}{dt}\left[ \operatorname{arctanh}\left( \frac{\Phi'(t)}{\Theta'(t)\cdot\cosh\Phi(t)} \right) \right]=\dfrac{d}{dt}\left[ \operatorname{arctanh}\left( \frac{\dot{\phi}(s(t))\cdot \dfrac{ds}{dt}(t)}{\dot{\theta}(s(t))\cdot \dfrac{ds}{dt}(t)\cdot\cosh\phi(s(t))} \right) \right]=$$
$$=\dfrac{d}{dt}\left[ \operatorname{arctanh}\left( \frac{\dot{\phi}(s(t))}{\dot{\theta}(s(t))\cdot\cosh\phi(s(t))} \right) \right]=\dfrac{d}{dt}\left[ f(s(t))\right]=\dot{f}(s(t))\cdot \dfrac{ds}{dt}(t).$$
Then
$$\dfrac{1}{\dfrac{ds}{dt}}\left\{\Theta'(t)\sinh\Phi(t) +\dfrac{d}{dt}\left[ \operatorname{arctanh}\left( \frac{\Phi'(t)}{\Theta'(t)\cdot\cosh\Phi(t)} \right) \right]\right\}$$
$$=\dot{\theta}(s(t))\sinh\phi(s(t)) +\dot{f}(s(t))=\tau_{\alpha}(s(t))=\tau_{\beta}(t),$$
which proves \eqref{eq:tau_x_3}.
\end{proof}

By a coordinate system we mean a proper orthochronous Lorentz transformation of the axes: $\kappa_\beta$, $\tau_\beta$ and the Frenet frame (up to the transformation) do not change, and $(\Theta,\Phi)$ are the angular coordinates of $T_\beta$ in that system. In Case~1 this requires $T_\beta\neq\pm e_3$ along $\beta$ (so that $\Theta$ is defined and $\tan\Phi\neq0$), and we assume $T_\beta$ future pointing, i.e.\ $\Phi\in(-\frac\pi2,\frac\pi2)$ (otherwise reverse the orientation of $\beta$, which changes $T_\beta,B_\beta$ into $-T_\beta,-B_\beta$ and keeps $N_\beta,\kappa_\beta,\tau_\beta$). We write $D=\dfrac{ds}{dt}>0$, we assume that $\Theta$ and $\Phi$ are of class $C^2$ on an open interval $J$ and that $\langle T_\beta',T_\beta'\rangle\neq0$, i.e.\ $\kappa_\beta>0$; in particular, a derivative that does not vanish has constant sign. Cases~1, 2 and~3 are those of Proposition~\ref{thm:arbitrary_param}.

\begin{remark}[Straight lines and planar curves]\label{rem:planarL3}
Recall that, if $\kappa_\beta>0$, $\beta$ is planar if and only if $\tau_\beta\equiv0$, i.e.\ if and only if $B_\beta$ is constant; the plane is orthogonal to $B_\beta$, hence spacelike in Case~3 ($B_\beta$ timelike) and timelike in Cases~1 and~2 ($B_\beta$ spacelike).
\begin{enumerate}
\item[(0)] \textbf{Straight lines.} $T_\beta'\equiv0$ if and only if $\beta$ is a straight line, if and only if $\Theta$ and $\Phi$ are both constant. In this case $\langle T_\beta',T_\beta'\rangle=0$ and $N_\beta$, $B_\beta$, $\tau_\beta$ are not defined. Items (a), (b) and Corollary~\ref{cor:helix_slantL3} assume $\kappa_\beta>0$.
\item[(a)] (Case~3) $\beta$ is planar if and only if there is a coordinate system in which $\Phi\equiv0$. In this case $\kappa_\beta=|\Theta'|/D$, $N_\beta=\operatorname{sgn}(\Theta')(-\sin\Theta,\cos\Theta,0)$, $B_\beta=-\operatorname{sgn}(\Theta')(0,0,1)$ and $\tau_\beta\equiv0$.
\item[(b)] (Cases~1 and~2) If $\Theta\equiv\Theta_0$ is constant in some coordinate system, then $\beta$ is planar (in a timelike plane), $B_\beta$ is constant, $\tau_\beta\equiv0$, and
\begin{align*}
\text{Case 1:}\quad & \kappa_\beta=\frac{\sec\Phi\,|\Phi'|}{D},\qquad B_\beta=\operatorname{sgn}(\Phi')\,(-\sin\Theta_0,\cos\Theta_0,0),\\
& N_\beta=\operatorname{sgn}(\Phi')\,(\cos\Theta_0\sec\Phi,\ \sin\Theta_0\sec\Phi,\ \tan\Phi),\\
\text{Case 2:}\quad & \kappa_\beta=\frac{|\Phi'|}{D},\qquad B_\beta=\operatorname{sgn}(\Phi')\,(\sin\Theta_0,-\cos\Theta_0,0),\\
& N_\beta=\operatorname{sgn}(\Phi')\,(\cos\Theta_0\sinh\Phi,\ \sin\Theta_0\sinh\Phi,\ \cosh\Phi).
\end{align*}
Conversely, if $\beta$ is planar with $B_\beta$ spacelike and there is a timelike unit vector $w$ parallel to its plane with $\pm w\notin T_\beta(J)$, then $\Theta$ is constant in every coordinate system with $e_3=w$.
\end{enumerate}
\end{remark}

\begin{proof}
(0) In each case $T_\beta'=\Theta' A+\Phi' C$, where $A=\partial_\Theta T_\beta$ and $C=\partial_\Phi T_\beta$ are linearly independent: $A\neq0$ has third coordinate $0$, while $C$ has third coordinate $\sec\Phi\tan\Phi\neq0$ (Case~1) or $\cosh\Phi\neq0$ (Cases~2, 3). Hence $T_\beta'=0$ if and only if $\Theta'=\Phi'=0$, i.e.\ $T_\beta$ is constant, and then $\beta(t)=\beta(t_0)+\big(\int_{t_0}^t D\big)T_\beta$ is a straight line; the converse is clear.

(a) If $\beta$ lies in a spacelike plane, its unit normal $v$ is timelike, and we may take $e_3=v$ (future pointing); then $0=\langle T_\beta,e_3\rangle=-\sinh\Phi$, so $\Phi\equiv0$. Conversely, if $\Phi\equiv0$, then $(\beta_3)'=D\sinh\Phi=0$ and $\beta$ lies in the plane $z=\text{const}$. For $\Phi\equiv0$ the formulas follow from Case~3 (here $\Theta'\neq0$ because $\kappa_\beta>0$), and $\tau_\beta=0$ from \eqref{eq:tau_x_3}, since $\Phi'=0$ and $\sinh\Phi=0$.

(b) Let $v_0=(-\sin\Theta_0,\cos\Theta_0,0)$ and let $r=\tan\Phi$ (Case~1) or $r=\cosh\Phi$ (Case~2). Then $\langle\beta',v_0\rangle=Dr(-\cos\Theta_0\sin\Theta_0+\sin\Theta_0\cos\Theta_0)=0$, so $\langle\beta,v_0\rangle$ is constant. The formulas follow from Cases~1 and~2 with $\Theta'=0$ (note $\Phi'\neq0$, since $\kappa_\beta>0$); $B_\beta$ is constant, hence $\tau_\beta=0$ (formula \eqref{eq:tau_x_1} does not apply, since $\Theta'=0$). There are no curves of Case~3 with $\Theta$ constant, since then $\langle T_\beta',T_\beta'\rangle=\Theta'^2\cosh^2\Phi-\Phi'^2=-\Phi'^2<0$. Conversely, let $e_3=w$. The unit normal $v$ of the plane is spacelike and orthogonal to $w$, so $v=(-\sin a,\cos a,0)$ and $0=\langle T_\beta,v\rangle=r\sin(\Theta-a)$. Now $r=\tan\Phi\neq0$ in Case~1 (because $T_\beta\neq\pm e_3$) and $r=\cosh\Phi\ge1$ in Case~2; hence $\Theta$ is a continuous function with values in $a+\pi\mathbb{Z}$, thus constant.
\end{proof}

\begin{corollary}[Helices and slant helices with timelike axis]\label{cor:helix_slantL3}
Let $\beta$ be as in Proposition~\ref{thm:arbitrary_param}, with $\kappa_\beta>0$. Recall that $\beta$ is a \emph{helix} (resp.\ \emph{slant helix}) if $\langle T_\beta,U\rangle$ (resp.\ $\langle N_\beta,U\rangle$) is constant for some fixed vector $U\neq0$ (the axis). We consider timelike axes: the spacelike and lightlike axes are not captured by the coordinates $(\Theta,\Phi)$, whose polar axis $e_3$ is timelike.
\begin{enumerate}
\item $\beta$ is a non-planar helix with timelike axis if and only if there is a coordinate system in which $\Phi\equiv\Phi_0$ with $\Phi_0\neq0$. This only occurs in Cases~1 and~3, $\Theta'\neq0$, and
\begin{align*}
\text{Case 1:}\quad & \kappa_\beta=\frac{|\tan\Phi_0|\,|\Theta'|}{D},\quad \tau_\beta=\frac{\sec\Phi_0\,\Theta'}{D},\quad \frac{\tau_\beta}{\kappa_\beta}=\frac{\operatorname{sgn}(\Theta')}{|\sin\Phi_0|},\\
\text{Case 3:}\quad & \kappa_\beta=\frac{\cosh\Phi_0\,|\Theta'|}{D},\quad \tau_\beta=\frac{\sinh\Phi_0\,\Theta'}{D},\quad \frac{\tau_\beta}{\kappa_\beta}=\operatorname{sgn}(\Theta')\tanh\Phi_0 .
\end{align*}
The case $\Phi_0=0$ is the planar case of Remark~\ref{rem:planarL3}(a).
\item $\beta$ is a slant helix with timelike axis and not a helix if and only if there is a coordinate system in which the third coordinate of $N_\beta$ is a nonzero constant $1/m$; equivalently, $\Phi'\neq0$ and
\begin{align*}
\text{Case 1:}\quad & \operatorname{sgn}m=\operatorname{sgn}(\Phi'\tan\Phi),\quad \Theta'^2=\big(m^2-\cot^2\Phi\big)\sec^2\Phi\,\Phi'^2,\\
\text{Case 2:}\quad & \operatorname{sgn}m=\operatorname{sgn}\Phi',\quad \Theta'^2=\big(\operatorname{sech}^2\Phi-m^2\big)\Phi'^2,\\
\text{Case 3:}\quad & \operatorname{sgn}m=\operatorname{sgn}\Phi',\quad \Theta'^2=\big(m^2+\operatorname{sech}^2\Phi\big)\Phi'^2 .
\end{align*}
In this case $\Theta'\neq0$ and, with $\varepsilon=\operatorname{sgn}(\Theta'\Phi')$,
\begin{align*}
\text{Case 1}\ (m^2\tan^2\Phi>1):\quad & \kappa_\beta=\frac{|m|\sec\Phi\,|\tan\Phi|\,|\Phi'|}{D},\\
& \tau_\beta=\frac{\varepsilon\,m^2\sec^2\Phi\,|\tan\Phi|\,\Phi'}{D\sqrt{m^2\tan^2\Phi-1}},\\
\text{Case 2}\ (m^2\cosh^2\Phi<1,\ |m|<1):\quad & \kappa_\beta=\frac{|m|\cosh\Phi\,|\Phi'|}{D},\\
& \tau_\beta=\frac{\varepsilon\,m^2\sinh\Phi\cosh\Phi\,\Phi'}{D\sqrt{1-m^2\cosh^2\Phi}},\\
\text{Case 3:}\quad & \kappa_\beta=\frac{|m|\cosh\Phi\,|\Phi'|}{D},\\
& \tau_\beta=\frac{\varepsilon\,m^2\sinh\Phi\cosh\Phi\,\Phi'}{D\sqrt{1+m^2\cosh^2\Phi}} .
\end{align*}
\end{enumerate}
\end{corollary}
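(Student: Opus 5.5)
The approach is to work directly from the explicit formulas of Proposition~\ref{thm:arbitrary_param}. Any timelike axis $U$ can be normalized to a unit future-pointing vector. A proper orthochronous Lorentz transformation then carries it to $e_3$; this changes neither $\kappa_\beta,\tau_\beta$ nor the frame up to the transformation. So "helix (resp.\ slant helix) with timelike axis" means exactly that, in some coordinate system, the third coordinate of $T_\beta$ (resp.\ $N_\beta$) is constant, since $\langle X,e_3\rangle=-X_3$.

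\textbf{Part (1).} The third coordinate of $T_\beta$ is $\sec\Phi$ in Case~1 and $\sinh\Phi$ in Cases~2, 3. Each is a strictly monotone function of $\Phi$ on the relevant range ($\Phi\in(-\pi/2,\pi/2)$ up to sign in Case~1). Hence it is constant if and only if $\Phi\equiv\Phi_0$.

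In Case~2 with $\Phi'=0$, the quantity $\langle T',T'\rangle$ equals $-\Theta'^2\cosh^2\Phi_0$. This makes $T'$ timelike, contradicting $N_\beta$ timelike, so the case cannot occur. In Cases~1 and~3, $\kappa_\beta>0$ forces $\Theta'\neq0$ by (0) of Remark~\ref{rem:planarL3}. In Case~1 we need $\tan\Phi_0\neq0$ because $T\neq\pm e_3$. $\Phi_0=0$ is possible only in Case~3, and there it is the planar case (a).

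The formulas follow by substitution. With $\Phi'=0$, the arctan/arctanh terms in \eqref{eq:tau_x_1} and \eqref{eq:tau_x_3} are constant (zero), so their derivatives vanish. Then $\tau_\beta=\Theta'\sec\Phi_0/D$, resp.\ $\Theta'\sinh\Phi_0/D$. The ratio $\tau_\beta/\kappa_\beta$ is immediate, using $\sec\Phi_0/|\tan\Phi_0|=1/|\sin\Phi_0|$ for $\Phi_0\in(-\pi/2,\pi/2)$. Non-planarity for $\Phi_0\neq0$ follows since $\tau_\beta\neq0$.

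\textbf{Part (2).} Read off $N_3$ from \eqref{eq:N_x_1}, \eqref{eq:N_x_2} and \eqref{eq:N_x_3}: it is $\Phi'\sec\Phi\tan\Phi/\sqrt{\cdot}$ in Case~1 and $\Phi'\cosh\Phi/\sqrt{\cdot}$ in Cases~2, 3. "Not a helix" together with part (1) forces $\Phi'\not\equiv0$. Then $N_3=1/m\neq0$, and since a nonvanishing continuous quantity has constant sign, $\Phi'\neq0$ everywhere. The sign of $m$ is the sign of the numerator.

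Squaring $N_3=1/m$ and solving for $\Theta'^2$ gives the three displayed relations. In Case~1, $m^2\Phi'^2\sec^2\Phi\tan^2\Phi=\Theta'^2\tan^2\Phi+\Phi'^2\sec^2\Phi$, hence $\Theta'^2=(m^2-\cot^2\Phi)\sec^2\Phi\,\Phi'^2$. Cases~2 and~3 are analogous. Conversely, these relations together with the sign condition give $N_3=1/m$. The constant $N_3=0$ is excluded: it forces $\Phi'=0$, i.e.\ a helix.

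The remaining step is showing $\Theta'\neq0$. If $\Theta'$ vanished at a point, the relation would force $m^2=\cot^2\Phi$ (resp.\ $\operatorname{sech}^2\Phi$) there. I would argue that on an open set this contradicts $\Phi'\neq0$, since $\Phi$ would be constant. Pointwise, I expect this to need the standing positivity assumptions: $m^2\tan^2\Phi>1$ in Case~1 and $m^2\cosh^2\Phi<1$ in Case~2 come from $\kappa_\beta>0$ rewritten using the relation. In Case~3 the factor $m^2+\operatorname{sech}^2\Phi$ is automatically positive. With $\Theta'\neq0$, the sign $\varepsilon$ is well defined.

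For the curvature, substitute $\Theta'^2$ into \eqref{eq:kappa_x_1}, \eqref{eq:kappa_x_2} and \eqref{eq:kappa_x_3}; for instance, in Case~1 the radicand becomes $m^2\sec^2\Phi\tan^2\Phi\,\Phi'^2$.

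The main obstacle is the torsion. One must take $\Theta'=\varepsilon\,\operatorname{sgn}(\Phi')|\Theta'|$, with $|\Theta'|$ the explicit square root. Then the argument of arctan/arctanh, e.g.\ $\Phi'/(\Theta'\sin\Phi)$, becomes a function of $\Phi$ alone: in Case~1 it is $\pm\cos\Phi/\sqrt{m^2-\cot^2\Phi}\,|\sin\Phi|^{-1}$ up to sign. Its $t$-derivative equals $\Phi'$ times an explicit $\Phi$-derivative, which is combined with $\Theta'\sec\Phi$. I would simplify after writing everything over $\sqrt{m^2\tan^2\Phi-1}$ and expect the terms to collapse to $m^2\sec^2\Phi|\tan\Phi|$. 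As a check, I would also verify the result against the slant helix criterion of Theorem~\ref{slant helix characterization}, i.e.\ that $\tau/\kappa$ has the required derivative structure.
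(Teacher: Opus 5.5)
Your overall route is the paper's: move the axis to $e_3$, read off the third coordinate of $T_\beta$ or $N_\beta$, square, substitute, and differentiate the $\arctan$/$\operatorname{arctanh}$ term to get the torsion. However, two steps do not go through as proposed.

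\textbf{First gap: $\Theta'\neq0$.} The strict inequalities $m^2\tan^2\Phi>1$ (Case~1) and $m^2\cosh^2\Phi<1$ (Case~2) do not come from $\kappa_\beta>0$. Once the relation for $\Theta'^2$ holds, $D\kappa_\beta=|m|\sec\Phi\,|\tan\Phi|\,|\Phi'|$, resp.\ $|m|\cosh\Phi\,|\Phi'|$, and this is positive automatically. The relation itself only says $g\ge0$, where $g=m^2\tan^2\Phi-1=\Theta'^2\tan^2\Phi/(\Phi'^2\sec^2\Phi)$, resp.\ $g=1-m^2\cosh^2\Phi=\Theta'^2\cosh^2\Phi/\Phi'^2$. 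Strict positivity of $g$ \emph{is} the claim $\Theta'\neq0$, so your justification is circular, and your open-set argument does not rule out isolated zeros. The missing idea is that a zero $t_0$ of $g\ge0$ is a minimum, so $g'(t_0)=0$.
\begin{itemize}
\item[] In Case~1, $g'=2m^2\tan\Phi\sec^2\Phi\,\Phi'\neq0$, a contradiction.
\item[] In Case~2, $g'=-2m^2\cosh\Phi\sinh\Phi\,\Phi'$ vanishes only if $\Phi(t_0)=0$. Then $g(t_0)=0$ gives $m^2=1$, and $g=1-\cosh^2\Phi<0$ near $t_0$, a contradiction.
\end{itemize}
This argument is also what gives $|m|<1$ in Case~2, which you do not address.

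\textbf{Second gap: ``not a helix'' in the converse of (2).} The ``if'' direction of (2) asserts that $\beta$ is \emph{not a helix}, and your proposal never proves this. Showing that the relations are equivalent to $N_{\beta,3}=1/m$ only yields the ``equivalently'' clause and the slant-helix property. The paper closes this by writing $\tau_\beta/\kappa_\beta=\operatorname{sgn}(\Theta')\,|m|\,q(\Phi)$ from the torsion formulas (e.g.\ $q=\sinh\Phi/\sqrt{1+m^2\cosh^2\Phi}$ in Case~3). It then checks that $q'$ never vanishes and applies Lancret's theorem for an arbitrary axis.

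A shorter alternative also works. A helix with axis $V$ has $\langle N_\beta,V\rangle=0$, so $N_\beta$ would lie on the affine line $V^\perp\cap\{\langle x,e_3\rangle=-1/m\}$. Its direction lies in $e_3^\perp$ and is therefore spacelike, so $\langle N_\beta,N_\beta\rangle=\pm1$ allows at most two values and $N_\beta$ is constant. Differentiating $\langle T_\beta,N_\beta\rangle=0$ then gives $D\kappa_\beta\langle N_\beta,N_\beta\rangle=0$, which is impossible.

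\textbf{Smaller points.}
\begin{itemize}
\item[] In Case~2 of (1), $\langle T_\beta',T_\beta'\rangle=\Theta'^2\cosh^2\Phi_0\ge0$, not $-\Theta'^2\cosh^2\Phi_0$. It is the spacelike character of $T_\beta'$ that contradicts $N_\beta$ timelike; a timelike $T_\beta'$ would contradict nothing.
\item[] $\sec\Phi$ is even, not monotone, so you need continuity together with $\tan\Phi\neq0$ to conclude that $\Phi$ is constant.
\item[] In Case~1 you must check that $e_3=U$ is admissible, i.e.\ $T_\beta\neq\pm U$. In (1) this holds because $|\langle T_\beta,U\rangle|=1$ would force $T_\beta=\pm U$ to be constant. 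In (2), $T_\beta=\pm U$ forces $d=0$, and $d=0$ must be excluded before choosing coordinates: $\langle T_\beta',U\rangle=D\kappa_\beta d=0$ makes $\beta$ a helix.
\end{itemize}
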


\begin{proof}
(1) ($\Rightarrow$) Let $\langle T_\beta,U\rangle=c$ with $U$ unit timelike, future pointing. Then $T_\beta'\perp U$, and the orthogonal complement of a timelike vector is spacelike, so $T_\beta'$ is spacelike: $N_\beta$ is spacelike and Case~2 does not occur. Take $e_3=U$ (admissible: in Case~1, $|c|>1$, since $|c|=1$ would give $T_\beta=\pm U$, constant). Then $T_{\beta,3}=-c$: in Case~3, $\sinh\Phi=-c$, so $\Phi\equiv\Phi_0$, with $\Phi_0\neq0$ because $\Phi_0=0$ is the planar case of Remark~\ref{rem:planarL3}(a); in Case~1, $\sec\Phi=-c=|c|>1$, so $\Phi\in\{\pm\arccos(1/|c|)\}$ and, by continuity, $\Phi\equiv\Phi_0\neq0$. ($\Leftarrow$) If $\Phi\equiv\Phi_0$, then $\langle T_\beta,e_3\rangle$ is constant, so $\beta$ is a helix. Since $\Phi'=0$ is incompatible with Case~2, we are in Case~1 or~3. By \eqref{eq:kappa_x_1}, \eqref{eq:kappa_x_3}, $\kappa_\beta>0$ gives $\Theta'\neq0$; by \eqref{eq:tau_x_1}, \eqref{eq:tau_x_3} (with $\Phi'=0$), $\tau_\beta=\sec\Phi_0\,\Theta'/D$ in Case~1 and $\tau_\beta=\sinh\Phi_0\,\Theta'/D$ in Case~3, which are nonzero; so $\beta$ is non-planar.

(2) ($\Rightarrow$) Let $\langle N_\beta,U\rangle=d$ with $U$ unit timelike, future pointing. If $d=0$, then $\langle T_\beta',U\rangle=0$, so $\langle T_\beta,U\rangle$ is constant and $\beta$ is a helix; hence $d\neq0$. Take $e_3=U$; this is admissible, since $T_\beta=\pm e_3$ would give $\langle N_\beta,e_3\rangle=0$. Let $m=-1/d$, so that $N_{\beta,3}=1/m$. By \eqref{eq:N_x_1}, \eqref{eq:N_x_2}, \eqref{eq:N_x_3}, $\Phi'\neq0$, and squaring:
\begin{align*}
\text{Case 1:}\quad & m^2\Phi'^2\sec^2\Phi\tan^2\Phi=\Theta'^2\tan^2\Phi+\Phi'^2\sec^2\Phi,\\
\text{Case 2:}\quad & \Phi'^2-\Theta'^2\cosh^2\Phi=m^2\Phi'^2\cosh^2\Phi,\\
\text{Case 3:}\quad & \Theta'^2\cosh^2\Phi-\Phi'^2=m^2\Phi'^2\cosh^2\Phi,
\end{align*}
which are the stated relations for $\Theta'^2$ (in Case~1 we use $\tan\Phi\neq0$); the sign of $m$ is the sign of $N_{\beta,3}$. Conversely, if $\Phi'\neq0$, the sign condition holds and $\Theta'^2$ is as stated, the same identities give $\sqrt{\Theta'^2\tan^2\Phi+\Phi'^2\sec^2\Phi}=|m|\sec\Phi\,|\tan\Phi|\,|\Phi'|$ (Case~1) and $\sqrt{|\Phi'^2-\Theta'^2\cosh^2\Phi|}=|m|\cosh\Phi\,|\Phi'|$ (Cases~2, 3), hence $N_{\beta,3}=1/m$. This also gives the formulas for $\kappa_\beta$.

\emph{$\Theta'\neq0$ and the bounds.} Case~3: $\Theta'^2=(m^2+\operatorname{sech}^2\Phi)\Phi'^2>0$. Case~2: $g=1-m^2\cosh^2\Phi=\Theta'^2\cosh^2\Phi/\Phi'^2\ge0$; if $g(t_0)=0$, then $g'(t_0)=-2m^2\cosh\Phi\sinh\Phi\,\Phi'=0$ forces $\Phi(t_0)=0$ and $m^2=1$, but then $g=1-\cosh^2\Phi<0$ for $\Phi\neq0$ near $t_0$, which is impossible; so $g>0$, $\Theta'\neq0$ and $m^2\cosh^2\Phi<1$. Case~1: $g=m^2\tan^2\Phi-1=\Theta'^2\tan^2\Phi/(\Phi'^2\sec^2\Phi)\ge0$; if $g(t_0)=0$, then $g'(t_0)=2m^2\tan\Phi\sec^2\Phi\,\Phi'\neq0$, which is impossible for $g\ge0$; so $g>0$ and $\Theta'\neq0$.

\emph{The torsion.} Put $\varepsilon=\operatorname{sgn}(\Theta'\Phi')$.

\emph{Case 3.} Let $u=\sqrt{1+m^2\cosh^2\Phi}$. Then $\Theta'=\varepsilon\Phi'u/\cosh\Phi$, $\dfrac{\Phi'}{\Theta'\cosh\Phi}=\dfrac\varepsilon u$ and $u'=m^2\cosh\Phi\sinh\Phi\,\Phi'/u$, so
\[
\Big[\operatorname{arctanh}\frac\varepsilon u\Big]'=-\frac{\varepsilon u'}{u^2-1}=-\frac{\varepsilon\tanh\Phi\,\Phi'}{u},\qquad \Theta'\sinh\Phi=\varepsilon\Phi'\tanh\Phi\,u,
\]
and \eqref{eq:tau_x_3} gives $D\tau_\beta=\varepsilon\Phi'\tanh\Phi\big(u-\tfrac1u\big)=\dfrac{\varepsilon m^2\sinh\Phi\cosh\Phi\,\Phi'}{u}$.

\emph{Case 2.} Let $v=\sqrt{1-m^2\cosh^2\Phi}>0$. Then $\Theta'=\varepsilon\Phi'v/\cosh\Phi$, $\dfrac{\Theta'\cosh\Phi}{\Phi'}=\varepsilon v$ and $v'=-m^2\cosh\Phi\sinh\Phi\,\Phi'/v$, so
\[
\big[\operatorname{arctanh}(\varepsilon v)\big]'=\frac{\varepsilon v'}{1-v^2}=-\frac{\varepsilon\tanh\Phi\,\Phi'}{v},\qquad \Theta'\sinh\Phi=\varepsilon\Phi'\tanh\Phi\,v,
\]
and \eqref{eq:tau_x_2} gives $D\tau_\beta=-\varepsilon\Phi'\tanh\Phi\big(v-\tfrac1v\big)=\dfrac{\varepsilon m^2\sinh\Phi\cosh\Phi\,\Phi'}{v}$.

\emph{Case 1.} Let $u=\sqrt{m^2\tan^2\Phi-1}>0$ and $\delta=\operatorname{sgn}(\tan\Phi)$ (constant, since $\tan\Phi\neq0$). Then $\Theta'=\varepsilon\Phi'\sec\Phi\,u/|\tan\Phi|$, $\dfrac{\Phi'}{\Theta'\sin\Phi}=\dfrac{\varepsilon\delta}{u}$ and $u'=m^2\tan\Phi\sec^2\Phi\,\Phi'/u$, so
\[
\Big[\arctan\frac{\varepsilon\delta}u\Big]'=-\frac{\varepsilon\delta u'}{1+u^2}=-\frac{\varepsilon\sec^2\Phi\,\Phi'}{u\,|\tan\Phi|},\qquad \Theta'\sec\Phi=\frac{\varepsilon\Phi'\sec^2\Phi\,u}{|\tan\Phi|},
\]
and \eqref{eq:tau_x_1} gives $D\tau_\beta=\dfrac{\varepsilon\Phi'\sec^2\Phi}{|\tan\Phi|}\Big(u+\dfrac1u\Big)=\dfrac{\varepsilon m^2\sec^2\Phi\,|\tan\Phi|\,\Phi'}{u}$.

\emph{Not a helix.} By the above, $\dfrac{\tau_\beta}{\kappa_\beta}=\operatorname{sgn}(\Theta')\,|m|\,q$, with $q=\dfrac{\sinh\Phi}{\sqrt{1+m^2\cosh^2\Phi}}$ (Case~3), $q=\dfrac{\sinh\Phi}{\sqrt{1-m^2\cosh^2\Phi}}$ (Case~2) and $q=\dfrac{\sec\Phi}{\sqrt{m^2\tan^2\Phi-1}}$ (Case~1). A direct computation gives
\[
q'=\frac{(1+m^2)\cosh\Phi\,\Phi'}{(1+m^2\cosh^2\Phi)^{3/2}},\qquad
q'=\frac{(1-m^2)\cosh\Phi\,\Phi'}{(1-m^2\cosh^2\Phi)^{3/2}},\qquad
q'=-\frac{(1+m^2)\sec\Phi\tan\Phi\,\Phi'}{(m^2\tan^2\Phi-1)^{3/2}},
\]
respectively, which never vanishes (in Case~2 because $|m|<1$). So $\tau_\beta/\kappa_\beta$ is not constant, and $\beta$ is not a helix, since a helix, whatever its axis, has $\tau_\beta/\kappa_\beta$ constant (Lancret's theorem in $\mathbb{L}^3$, \cite[Theorem~2.12]{Lopez2014}).
\end{proof}

\subsection{Mannheim pair}

\begin{definition}\label{def:mannheim}
A curve $\alpha(s)$ is called a \emph{Mannheim curve} if there exist a differentiable curve $\alpha^*(s^*)$ and a reparametrization $s^*=s^*(s)$ such that, at corresponding points, the principal normal line of $\alpha$ coincides with the binormal line of $\alpha^*$. The curve $\alpha^*$ is called the \emph{Mannheim partner} of $\alpha$, and $(\alpha,\alpha^*)$ is called a \emph{Mannheim pair}. Equivalently,
\begin{equation}
    \overrightarrow{\alpha(s)\alpha^*(s^*)}// N_\alpha(s)// B_{\alpha^*}(s^*).
\end{equation}
\end{definition}

\begin{remark}\label{rem:mannheim-sign}
Throughout this paper, we assume without loss of generality that
\begin{equation}\label{fundamental hypotesis Mannheim Pair}
    N_\alpha(s)=-B_{\alpha^*}(s^*).
\end{equation}
\end{remark}

\begin{remark}\label{rem:normalization}
The relations of Theorem~\ref{lem:relationsL3} and Corollary~\ref{corolario par s e par s*} are stated under a normalization of the orientations: besides $N_\alpha=-B_{\alpha^*}$, we assume $\frac{ds^*}{ds}>0$ and the sign conditions of Corollary~\ref{cor:angles}. These can be obtained by applying an isometry of $\mathbb{L}^3$, possibly orientation-reversing (which changes the signs of $\tau_\alpha$ and $B_\alpha$), and by reversing the orientations of $\alpha$ and $\alpha^*$. For instance, the pairs of Types~1 and~4 of Section~\ref{sec:choice_phi mannheim condition} have $\tau_\beta>0$ as printed; the map $(x,y,z)\mapsto(-x,y,z)$ together with $t\mapsto-t$ takes them to pairs with $\tau<0$ satisfying these conditions.
\end{remark}

\begin{example}\label{Example Mannheim pair slanthelixXhelix}  Mannheim pair (Slant helix, helix).
\begin{eqnarray}
 \alpha(s)=
\left(
\operatorname{arcsinh}(e^{2s})-\dfrac{1}{\sqrt{1+e^{-4s}}},\;
\dfrac{1}{\sqrt{1+e^{4s}}}-\operatorname{arcsinh}(e^{-2s}),\;
\dfrac{\sqrt{3}}{2}\ln\cosh 2s+\dfrac{2}{\sqrt{3}}
\right),\quad s>0,
\end{eqnarray}
\begin{eqnarray}
\alpha^{*}(s^{*})=
\left(
\begin{array}{l}
\operatorname{artanh}\!\Big(\cos\!\big(\tfrac12\arcsin e^{-2s^{*}}\big)\Big)-2\cos\!\big(\tfrac12\arcsin e^{-2s^{*}}\big)\\[6pt]
\ \ 2\sin\!\big(\tfrac12\arcsin e^{-2s^{*}}\big)-\operatorname{artanh}\!\Big(\sin\!\big(\tfrac12\arcsin e^{-2s^{*}}\big)\Big)\\[6pt]
\ \ \sqrt{3}\,s^{*}
\end{array}
\right),\quad s^{*}>0,
\end{eqnarray}
is a Mannheim pair with $\lambda=-\dfrac{1}{\sqrt{3}}$.

 In fact, take $s^{*}(s)=\frac{1}{2}\ln\cosh 2s$ with $s>0$.

\begin{equation}
\alpha^{*}(s^{*}(s))-\alpha(s)
=
-\dfrac{1}{\sqrt{3}}\left(
\dfrac{\sqrt{3}}{\sqrt{1+e^{-4s}}},\;
-\dfrac{\sqrt{3}}{\sqrt{1+e^{4s}}},\;
2
\right),
\qquad s>0.
\end{equation}
 \begin{equation}
N_{\alpha}(s)=
\left(
\dfrac{\sqrt{3}}{\sqrt{1+e^{-4s}}},\;
-\dfrac{\sqrt{3}}{\sqrt{1+e^{4s}}},\;
2
\right),
 \end{equation}

\begin{equation}
 B_{\alpha^{*}}(s^{*}(s))=-\left(
\dfrac{\sqrt{3}}{\sqrt{1+e^{-4s}}},\;
-\dfrac{\sqrt{3}}{\sqrt{1+e^{4s}}},\;
2
\right).
\end{equation}

Then
$$\alpha^{*}(s^{*}(s))-\alpha(s)=-\dfrac{1}{\sqrt{3}}\,N_{\alpha}(s)=\dfrac{1}{\sqrt{3}}\,B_{\alpha^{*}}(s^{*}(s)).$$
Moreover, $\kappa_\alpha=\sqrt3\operatorname{sech}^2 2s$ and $\tau_\alpha=-\sqrt3\operatorname{sech}2s\tanh2s$, so $\dfrac{\tau_\alpha}{\kappa_\alpha}=-\sinh2s$ is non-constant and $\alpha$ is not a helix. However, $\langle N_\alpha(s),e_3\rangle=-2$ is constant, so $\alpha$ is a slant helix (with timelike axis $e_3$). On the other hand, $\langle T_{\alpha^*}(s^*),e_3\rangle=-\sqrt3$ is constant, so $\alpha^*$ is a helix. Here $T_\alpha$ is spacelike, $N_\alpha$ is timelike, $B_\alpha$ is spacelike, $T_{\alpha^*}$ and $N_{\alpha^*}$ are spacelike and $B_{\alpha^*}$ is timelike, so the pair is of Type~3 (see Corollary~\ref{cor:angles}). In accordance with Proposition~\ref{prop:slant_helix_L3}, stated below, $\sigma_3=-\dfrac{2}{\sqrt3}$ is constant and $\dfrac{\tau_{\alpha^*}}{\kappa_{\alpha^*}}=\dfrac{\eta_3}{\sigma_3}=\dfrac{\sqrt3}{2}$.
\end{example}

The next result shows that the causal character of $B_{\alpha^*}$ is determined by that of $T_\alpha \wedge T_{\alpha^*}$.

\begin{corollary}
We have
\begin{enumerate}
    \item $T_{\alpha}\wedge T_{\alpha^{*}}$ is timelike if and only if $B_{\alpha^{*}}$ (or $N_{\alpha}$) is timelike.
    \item $T_{\alpha}\wedge T_{\alpha^{*}}$ is spacelike (and nonzero) if and only if $B_{\alpha^{*}}$ (or $N_{\alpha}$) is spacelike.
\end{enumerate}
\end{corollary}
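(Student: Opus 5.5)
The idea is that, along a Mannheim pair, $T_\alpha$ and $T_{\alpha^*}$ both lie in the plane orthogonal to the common line $N_\alpha=-B_{\alpha^*}$. Their cross product is therefore parallel to $B_{\alpha^*}$, and its causal character is that of $B_{\alpha^*}$.

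\emph{Step 1: orthogonality.} By Definition~\ref{def:mannheim} and Remark~\ref{rem:mannheim-sign}, $N_\alpha(s)=-B_{\alpha^*}(s^*(s))$. Since $\{T_\alpha,N_\alpha,B_\alpha\}$ is orthonormal, $\langle T_\alpha,N_\alpha\rangle=0$, hence $\langle T_\alpha,B_{\alpha^*}\rangle=0$. Since $\{T_{\alpha^*},N_{\alpha^*},B_{\alpha^*}\}$ is orthonormal, $\langle T_{\alpha^*},B_{\alpha^*}\rangle=0$. Thus both tangents lie in $B_{\alpha^*}^{\perp}$, which is the nondegenerate plane spanned by $T_{\alpha^*}$ and $N_{\alpha^*}$.

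\emph{Step 2: the cross product is parallel to $B_{\alpha^*}$.} Write $T_\alpha=aT_{\alpha^*}+bN_{\alpha^*}$. Then
\[
T_\alpha\wedge T_{\alpha^*}=b\,N_{\alpha^*}\wedge T_{\alpha^*}=-b\,B_{\alpha^*}.
\]
From property (3) of the cross product we also get
\[
\langle T_\alpha\wedge T_{\alpha^*},T_\alpha\wedge T_{\alpha^*}\rangle=b^2\varepsilon_3^*,
\]
where $\varepsilon_3^*=\langle B_{\alpha^*},B_{\alpha^*}\rangle$. If $b\neq0$, the causal character of $T_\alpha\wedge T_{\alpha^*}$ is exactly that of $B_{\alpha^*}$, hence of $N_\alpha=-B_{\alpha^*}$. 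This gives both directions of (1) and (2) at once, because by our exclusion of lightlike vectors every vector is either timelike or spacelike.

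\emph{Step 3: $b\neq0$, the main obstacle.} Here $b=\varepsilon_2^*\langle T_\alpha,N_{\alpha^*}\rangle$, so we need $T_\alpha$ not parallel to $T_{\alpha^*}$. The plan is to differentiate $\alpha^*(s^*(s))=\alpha(s)+\lambda N_\alpha(s)$ using \eqref{eq:frenet}:
\[
T_{\alpha^*}\frac{ds^*}{ds}=(1+\lambda\varepsilon_3\kappa_\alpha)T_\alpha+\lambda\varepsilon_3\tau_\alpha B_\alpha .
\]
Hence $T_\alpha\parallel T_{\alpha^*}$ exactly when $\tau_\alpha=0$ at that point. In that case $T_\alpha\wedge T_{\alpha^*}=0$, which is spacelike by our convention, so the ``spacelike and nonzero'' clause in (2) needs $b\neq0$.

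I would handle this in one of two ways, depending on the paper's implicit hypotheses. The first is to invoke the nondegeneracy already implicit in the Mannheim relations of Theorem~\ref{theorem caracterization mannheim pair}, under which $\tau_\alpha\neq0$ because otherwise the partner degenerates. The second is to state that at points with $\tau_\alpha=0$ the cross product vanishes and the equivalences are asserted where it is nonzero.

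With $b\neq0$ secured, Step 2 finishes the proof. Computing $\langle T_\alpha,N_{\alpha^*}\rangle$ explicitly in the three causal cases is not needed.
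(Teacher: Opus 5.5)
This is essentially the paper's proof: it also writes $T_\alpha=xT_{\alpha^*}+yN_{\alpha^*}$ and rules out $y=0$ (your $b=0$) using $\frac{d\alpha^*}{ds}=(1+\lambda\varepsilon_3\kappa_\alpha)T_\alpha+\lambda\varepsilon_3\tau_\alpha B_\alpha$, then reads off $\langle T_\alpha\wedge T_{\alpha^*},T_\alpha\wedge T_{\alpha^*}\rangle=y^2\langle B_{\alpha^*},B_{\alpha^*}\rangle$. In Step~3 the paper takes your first option and cites Corollary~\ref{corolario par s e par s*} for $\tau_\alpha\neq0$; pointwise, a zero of $\tau_\alpha$ would force $1+\lambda\varepsilon_3\kappa_\alpha=0$ by \eqref{Mannheim equation how hypotesis}, hence $\frac{d\alpha^*}{ds}=0$, contradicting $\frac{ds^*}{ds}\neq0$, so your weaker second option is not needed.
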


\begin{proof}
Since $\langle T_{\alpha},N_{\alpha}\rangle=0=\langle T_{\alpha},-B_{\alpha^{*}}\rangle$, we have $T_{\alpha}=x\,T_{\alpha^{*}}+y\,N_{\alpha^{*}}$, with $y\neq0$: if $y=0$, then $T_\alpha=\pm T_{\alpha^*}$ and, since $\frac{d\alpha^*}{ds}=(1+\lambda\varepsilon_3\kappa_\alpha)T_\alpha+\lambda\varepsilon_3\tau_\alpha B_\alpha$, we would get $\tau_\alpha=0$, which is impossible for a Mannheim pair (Corollary~\ref{corolario par s e par s*}). Then
$$\langle T_{\alpha}\wedge T_{\alpha^{*}},T_{\alpha}\wedge T_{\alpha^{*}} \rangle=y^2\langle N_{\alpha^{*}}\wedge T_{\alpha^{*}},N_{\alpha^{*}}\wedge T_{\alpha^{*}} \rangle=y^2\langle T_{\alpha^{*}}\wedge N_{\alpha^{*}},T_{\alpha^{*}}\wedge N_{\alpha^{*}} \rangle=y^2\langle B_{\alpha^{*}},B_{\alpha^{*}} \rangle,$$
therefore
\begin{equation}\label{cases of plane T, T*}
\langle T_{\alpha}\wedge T_{\alpha^{*}},T_{\alpha}\wedge T_{\alpha^{*}} \rangle=y^2\langle B_{\alpha^{*}},B_{\alpha^{*}} \rangle=y^2\langle N_{\alpha},N_{\alpha} \rangle,
\end{equation}
and the two items of the result follow directly from this formula.
\end{proof}

The next corollary fixes, for each of the five types of \cite{ElsharkawyElshenhab}, the angle between $T_\alpha$ and $T_{\alpha^*}$.

\begin{corollary}\label{cor:angles} For a Mannheim pair $(\alpha(s),\alpha^{*}(s^{*}))$, we have five cases to consider (the types are those of Propositions~1 and~2 of \cite{ElsharkawyElshenhab}):
\begin{enumerate}
    \item $T_{\alpha^{*}}$ is timelike, $N_{\alpha^{*}}$ is spacelike, $B_{\alpha^{*}}$ is spacelike, and
    \begin{enumerate}[label=\arabic{enumi}.\arabic*.]
        \item $T_{\alpha}$ is spacelike, $N_{\alpha}$ is spacelike, $B_{\alpha}$ is timelike, with $T_{\alpha^{*}}$ and $B_{\alpha}$ in the same time cone.

        By equation \eqref{angle type 2} (\textbf{Type 1}):
\begin{equation}\label{angle phi1 cor}
     \sinh \bm{\varphi}_{1}=\langle T_{\alpha}, T_{\alpha^{*}} \rangle
\end{equation}
        \item $T_{\alpha}$ is timelike, $N_{\alpha}$ is spacelike, $B_{\alpha}$ is spacelike, with $T_{\alpha}$ and $T_{\alpha^{*}}$ in the same time cone.

        By equation \eqref{angle type 1} (\textbf{Type 2}):
\begin{equation}\label{angle phi2 cor}
     \cosh \bm{\varphi}_{2}=-\langle T_{\alpha}, T_{\alpha^{*}} \rangle
\end{equation}
    \end{enumerate}
    \item $T_{\alpha^{*}}$ is spacelike, and
    \begin{enumerate}[label=\arabic{enumi}.\arabic*.]
        \item $T_{\alpha}$ is spacelike, $N_{\alpha}$ is timelike, $B_{\alpha}$ is spacelike, and $N_{\alpha^{*}}$ is spacelike, $B_{\alpha^{*}}$ is timelike.

        $\langle T_{\alpha}\wedge T_{\alpha^{*}},T_{\alpha}\wedge T_{\alpha^{*}} \rangle<0$ by equation \eqref{cases of plane T, T*}, then by equation \eqref{angle type 3} (\textbf{Type 3}):
\begin{equation}\label{angle phi3 cor}
     \cos \bm{\varphi}_{3}=\langle T_{\alpha}, T_{\alpha^{*}} \rangle
\end{equation}
        \item $T_{\alpha}$ is timelike, $N_{\alpha}$ is spacelike, $B_{\alpha}$ is spacelike, and $N_{\alpha^{*}}$ is timelike, $B_{\alpha^{*}}$ is spacelike, with $\langle T_{\alpha^{*}},B_{\alpha}\rangle>0$.

        By equation \eqref{angle type 2} (\textbf{Type 4}):
\begin{equation}\label{angle phi4 cor}
     \sinh \bm{\varphi}_{4}=\langle T_{\alpha}, T_{\alpha^{*}} \rangle
\end{equation}
        \item $T_{\alpha}$ is spacelike, $N_{\alpha}$ is spacelike, $B_{\alpha}$ is timelike, and $N_{\alpha^{*}}$ is timelike, $B_{\alpha^{*}}$ is spacelike, with $\langle T_{\alpha}, T_{\alpha^{*}} \rangle>0$.

        $\langle T_{\alpha}\wedge T_{\alpha^{*}},T_{\alpha}\wedge T_{\alpha^{*}} \rangle>0$ by equation \eqref{cases of plane T, T*}, then by equation \eqref{angle type 5} (\textbf{Type 5}):
\begin{equation}\label{angle phi5 cor}
     \cosh \bm{\varphi}_{5}=\langle T_{\alpha}, T_{\alpha^{*}} \rangle
\end{equation}
    \end{enumerate}
\end{enumerate}
\end{corollary}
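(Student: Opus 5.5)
The corollary is a case analysis on causal characters, and I would split it according to the causal character of $B_{\alpha^*}=-N_\alpha$. First, from the preceding corollary, $T_\alpha\wedge T_{\alpha^*}$ has the same causal character as $N_\alpha$ and $B_{\alpha^*}$. Both $T_\alpha$ and $T_{\alpha^*}$ are orthogonal to $N_\alpha=-B_{\alpha^*}$, so the plane $\pi=\operatorname{span}\{T_\alpha,T_{\alpha^*}\}=N_\alpha^{\perp}$ is the common osculating-complement plane. This plane contains $T_\alpha,B_\alpha$ and also $T_{\alpha^*},N_{\alpha^*}$.

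\emph{$B_{\alpha^*}$ spacelike.} Then $N_\alpha$ is spacelike and $\pi$ is timelike. By Corollary~\ref{condicao dos vaerpsilons}, exactly one of $T_{\alpha^*},N_{\alpha^*}$ is timelike, and exactly one of $T_\alpha,B_\alpha$ is timelike. This gives four combinations.
\begin{itemize}
\item[] If $T_{\alpha^*}$ and $B_\alpha$ are timelike, we get Type~1. The pair $(T_\alpha,T_{\alpha^*})$ is spacelike--timelike, so \eqref{angle type 2} applies.
\item[] If $T_{\alpha^*}$ and $T_\alpha$ are timelike, we get Type~2, and \eqref{angle type 1} applies.
\item[] If $N_{\alpha^*}$ and $T_\alpha$ are timelike, we get Type~4, again with \eqref{angle type 2}.
\item[] If $N_{\alpha^*}$ and $B_\alpha$ are timelike, then $T_\alpha,T_{\alpha^*}$ are both spacelike and span the timelike plane $\pi$. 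Equivalently, $\langle T_\alpha\wedge T_{\alpha^*},T_\alpha\wedge T_{\alpha^*}\rangle>0$ by \eqref{cases of plane T, T*}, so \eqref{angle type 5} applies. This is Type~5.
\end{itemize}

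\emph{$B_{\alpha^*}$ timelike.} Then $N_\alpha$ is timelike and, by Corollary~\ref{condicao dos vaerpsilons}, all of $T_\alpha,B_\alpha,T_{\alpha^*},N_{\alpha^*}$ are spacelike. The plane $\pi$ is spacelike, so \eqref{cases of plane T, T*} gives a negative sign and \eqref{angle type 3} applies. This is Type~3.

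Together these give exactly the five types listed.

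It remains to remove the absolute values in the angle formulas and obtain the stated signed identities. In each case I would use the orientation normalization of Remark~\ref{rem:normalization}: reverse $\alpha$ or $\alpha^*$, or apply an isometry, to fix the sign of $\langle T_\alpha,T_{\alpha^*}\rangle$.
\begin{itemize}
\item[] Type~2: requiring $T_\alpha$ and $T_{\alpha^*}$ in the same time cone gives $\langle T_\alpha,T_{\alpha^*}\rangle<0$, hence $\cosh\varphi_2=-\langle T_\alpha,T_{\alpha^*}\rangle$.
\item[] Type~1: requiring $T_{\alpha^*}$ and $B_\alpha$ in the same time cone fixes the sign of $\langle T_\alpha,T_{\alpha^*}\rangle$. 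Then $\varphi_1$ is taken as the signed hyperbolic angle with $\sinh\varphi_1=\langle T_\alpha,T_{\alpha^*}\rangle$.
\item[] Type~4: the condition $\langle T_{\alpha^*},B_\alpha\rangle>0$ plays the same role.
\item[] Type~5: the condition $\langle T_\alpha,T_{\alpha^*}\rangle>0$ is imposed directly.
\item[] Type~3: no normalization is needed, since $\varphi_3\in[0,\pi]$ is a genuine angle and $\cos\varphi_3=\langle T_\alpha,T_{\alpha^*}\rangle$ holds by \eqref{angle type 3}.
\end{itemize}

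The main obstacle is this sign bookkeeping in Types~1 and~4. There one must check that the chosen normalization is compatible with $N_\alpha=-B_{\alpha^*}$ and with $ds^*/ds>0$. One must also check that it can be achieved simultaneously with these, which needs an orientation-reversing isometry flipping $B_\alpha$. I would first verify that flipping $\alpha$ changes $T_\alpha,B_\alpha$ but not $N_\alpha$, so the hypothesis \eqref{fundamental hypotesis Mannheim Pair} is preserved.
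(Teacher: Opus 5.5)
Your causal-character classification is correct, and it is exactly the justification the paper gives inline in the statement. The plane $\operatorname{span}\{T_\alpha,T_{\alpha^*}\}=N_\alpha^\perp$ contains $T_\alpha,B_\alpha,T_{\alpha^*},N_{\alpha^*}$. Corollary~\ref{condicao dos vaerpsilons} leaves four combinations when $N_\alpha=-B_{\alpha^*}$ is spacelike and one when it is timelike. When both tangents are spacelike, \eqref{cases of plane T, T*} decides between \eqref{angle type 3} and \eqref{angle type 5}.

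The step that fails is your removal of the absolute value in Types~1 and~4. Since $T_\alpha\perp B_\alpha$, the conditions you invoke say nothing about $\langle T_\alpha,T_{\alpha^*}\rangle$.
\begin{itemize}
\item In Type~1, write $T_{\alpha^*}=a\,T_\alpha+b\,B_\alpha$ with $a^2-b^2=-1$. ``Same time cone as $B_\alpha$'' means $\langle T_{\alpha^*},B_\alpha\rangle=-b<0$, while $\langle T_\alpha,T_{\alpha^*}\rangle=a$ can have either sign.
\item In Type~4, $b^2-a^2=1$, and $\langle T_{\alpha^*},B_\alpha\rangle=b>0$ leaves $\langle T_\alpha,T_{\alpha^*}\rangle=-a$ free.
\end{itemize}
These conditions only make the $\cosh$-coefficient positive in \eqref{eq:T_star_1} and \eqref{eq:T_star_4}.

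What fixes the sign of $\langle T_\alpha,T_{\alpha^*}\rangle$ is the orientation. From $\frac{d\alpha^*}{ds}=(1+\lambda\varepsilon_3\kappa_\alpha)T_\alpha+\lambda\varepsilon_3\tau_\alpha B_\alpha$ and \eqref{Mannheim equation how hypotesis},
\[
\frac{ds^*}{ds}\,\langle T_{\alpha^*},T_\alpha\rangle=\varepsilon_1\,(1+\lambda\varepsilon_3\kappa_\alpha)=-\frac{\lambda\,\tau_\alpha^2}{\kappa_\alpha},
\]
so with $\frac{ds^*}{ds}>0$ one gets $\operatorname{sgn}\langle T_\alpha,T_{\alpha^*}\rangle=-\operatorname{sgn}\lambda$. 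The consequences type by type are:
\begin{itemize}
\item Types~2 and~5: this gives the Type~2 time-cone condition and the Type~5 condition $\langle T_\alpha,T_{\alpha^*}\rangle>0$ for free.
\item Type~4: it gives $\langle T_\alpha,T_{\alpha^*}\rangle>0$, so the unsigned formula from \eqref{angle type 2} is right there. The reason is this computation, not $\langle T_{\alpha^*},B_\alpha\rangle>0$.
\item Type~1: it gives $\langle T_\alpha,T_{\alpha^*}\rangle<0$. Then $\sinh\varphi_1=\langle T_\alpha,T_{\alpha^*}\rangle$ cannot hold with $\varphi_1\ge0$ as in \eqref{angle type 2}, so $\varphi_1$ must be a negative signed angle. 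This is the $\bm{a}_1<0$ of Corollary~\ref{corolario par s e par s*}.
\end{itemize}
Your fallback to a signed angle is therefore the right reading, but it is a convention. The sentence claiming that the time-cone condition fixes the sign should be replaced by the computation above.

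Finally, the paper does not derive the Type~1 time-cone condition or the Type~4 condition $\langle T_{\alpha^*},B_\alpha\rangle>0$. Both are imposed as normalizations in Remark~\ref{rem:normalization}. Your last paragraph sketches how to reach them:
\begin{itemize}
\item reverse $\alpha$ alone to get $\frac{ds^*}{ds}>0$;
\item reverse both curves and apply an orientation-reversing isometry. This preserves $N_\alpha=-B_{\alpha^*}$ and $\frac{ds^*}{ds}$, and flips the sign of $\langle T_{\alpha^*},B_\alpha\rangle$.
\end{itemize}
That sketch is sound.
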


The next theorem collects the relations between the Frenet apparatus of a Mannheim pair in $\mathbb{L}^3$, for each of the five types of \cite{ElsharkawyElshenhab}, obtained by straightforward calculations from Theorems~1, 2 and~5 of \cite{ElsharkawyElshenhab}.

\begin{theorem}[Elsharkawy and Elshenhab \cite{ElsharkawyElshenhab}]\label{lem:relationsL3}
Let $\{T_\alpha,\allowbreak N_\alpha,\allowbreak B_\alpha,\allowbreak \kappa_\alpha,\allowbreak \tau_\alpha\}$ and $\{T_{\alpha^*},\allowbreak N_{\alpha^*},\allowbreak B_{\alpha^*},\allowbreak \kappa_{\alpha^*},\allowbreak \tau_{\alpha^*}\}$ be the Frenet apparatus of the Mannheim pair $(\alpha, \alpha^*)$ of Type~$k$, $k\in\{1,2,3,4,5\}$, respectively. Define $\bm{\varphi}_k(s, s^*) = \angle(T_\alpha(s), T_{\alpha^*}(s^*))$ as in Corollary~\ref{cor:angles}, and set $\bm{a}_k(s) = \bm{\varphi}_k(s, s^*(s))$ and $\bm{A}_k(s^*) = \bm{\varphi}_k(s(s^*), s^*)$. In Types~2, 3 and~5 the angle $\bm{\varphi}_k$ is taken with sign, so that the expressions of $T_{\alpha^*}$ below hold. There exists a nonzero constant $\lambda$ such that
\begin{equation}\label{eq:alpha_star}
\alpha^*(s^*) = \alpha(s(s^{*})) + \lambda N_\alpha(s(s^{*})),
\end{equation}
or equivalently
\begin{equation}\label{eq:alpha}
\alpha(s)=\alpha^*(s^{*}(s))+\lambda B_{\alpha^{*}}(s^{*}(s)).
\end{equation}
And if $\kappa_{\alpha^*}\neq 0$ the following relations hold:
\begin{enumerate}[label=\textbf{Type \arabic*.}, leftmargin=*]
\item
\begin{align}
T_{\alpha^*}(s^{*}) &= \sinh\bm{A}_1(s^{*})\cdot T_\alpha(s(s^{*})) + \cosh\bm{A}_1(s^{*})\cdot B_\alpha(s(s^{*})), \label{eq:T_star_1}\\[6pt]
N_{\alpha^*}(s^{*}) &= \cosh\bm{A}_1(s^{*})\cdot T_\alpha(s(s^{*})) + \sinh\bm{A}_1(s^{*})\cdot B_\alpha(s(s^{*})), \label{eq:N_star_1}\\[6pt]
T_\alpha(s) &= -\sinh\bm{a}_1(s)\cdot T_{\alpha^*}(s^{*}(s)) + \cosh\bm{a}_1(s)\cdot N_{\alpha^*}(s^{*}(s)), \label{eq:T_alpha_1}\\[6pt]
B_\alpha(s) &= \cosh\bm{a}_1(s)\cdot T_{\alpha^*}(s^{*}(s)) - \sinh\bm{a}_1(s)\cdot N_{\alpha^{*}}(s^{*}(s)), \label{eq:B_alpha_1}\\[6pt]
\kappa_\alpha &= \frac{1}{\lambda}\cosh^2\bm{a}_1, \qquad \tau_\alpha = \frac{1}{\lambda}\sinh\bm{a}_1\cosh\bm{a}_1, \label{eq:kappa_tau_alpha_1}\\[6pt]
\kappa_{\alpha^*} &= \frac{d\bm{A}_1}{ds^*}, \qquad \tau_{\alpha^*} = \frac{1}{\lambda}\coth\bm{A}_1, \label{eq:kappa_tau_star_1}\\[6pt]
\sinh\bm{a}_1 &= -\frac{ds^*}{ds}, \qquad \operatorname{csch}\bm{A}_{1}=-\dfrac{ds}{ds^{*}}. \label{eq:ds_1}
\end{align}
\item
\begin{align}
T_{\alpha^*}(s^{*}) &= \cosh\bm{A}_2(s^{*})\cdot T_\alpha(s(s^{*})) + \sinh\bm{A}_2(s^{*})\cdot B_\alpha(s(s^{*})), \label{eq:T_star_2}\\[6pt]
N_{\alpha^*}(s^{*}) &= \sinh\bm{A}_2(s^{*})\cdot T_\alpha(s(s^{*})) + \cosh\bm{A}_2(s^{*})\cdot B_\alpha(s(s^{*})), \label{eq:N_star_2}\\[6pt]
T_\alpha(s) &= \cosh\bm{a}_2(s)\cdot T_{\alpha^*}(s^{*}(s)) - \sinh\bm{a}_2(s)\cdot N_{\alpha^*}(s^{*}(s)), \label{eq:T_alpha_2}\\[6pt]
B_\alpha(s) &= -\sinh\bm{a}_2(s)\cdot T_{\alpha^*}(s^{*}(s)) + \cosh\bm{a}_2(s)\cdot N_{\alpha^{*}}(s^{*}(s)), \label{eq:B_alpha_2}\\[6pt]
\kappa_\alpha &= \frac{1}{\lambda}\sinh^2\bm{a}_2, \qquad \tau_\alpha = \frac{1}{\lambda}\sinh\bm{a}_2\cosh\bm{a}_2, \label{eq:kappa_tau_alpha_2}\\[6pt]
\kappa_{\alpha^*} &= \frac{d\bm{A}_2}{ds^*}, \qquad \tau_{\alpha^*} = \frac{1}{\lambda}\tanh\bm{A}_2, \label{eq:kappa_tau_star_2}\\[6pt]
\cosh\bm{a}_2 &= \frac{ds^*}{ds}, \qquad \operatorname{sech}\bm{A}_{2}=\dfrac{ds}{ds^{*}}. \label{eq:ds_2}
\end{align}
\item
\begin{align}
T_{\alpha^*}(s^{*}) &= \cos\bm{A}_3(s^{*})\cdot T_\alpha(s(s^{*})) - \sin\bm{A}_3(s^{*})\cdot B_\alpha(s(s^{*})), \label{eq:T_star_3}\\[6pt]
N_{\alpha^*}(s^{*}) &= -\sin\bm{A}_3(s^{*})\cdot T_\alpha(s(s^{*})) - \cos\bm{A}_3(s^{*})\cdot B_\alpha(s(s^{*})), \label{eq:N_star_3}\\[6pt]
T_\alpha(s) &= \cos\bm{a}_3(s)\cdot T_{\alpha^*}(s^{*}(s)) - \sin\bm{a}_3(s)\cdot N_{\alpha^*}(s^{*}(s)), \label{eq:T_alpha_3}\\[6pt]
B_\alpha(s) &= -\sin\bm{a}_3(s)\cdot T_{\alpha^*}(s^{*}(s)) - \cos\bm{a}_3(s)\cdot N_{\alpha^{*}}(s^{*}(s)), \label{eq:B_alpha_3}\\[6pt]
\kappa_\alpha &= -\frac{1}{\lambda}\sin^2\bm{a}_3, \qquad \tau_\alpha = -\frac{1}{\lambda}\sin\bm{a}_3\cos\bm{a}_3, \label{eq:kappa_tau_alpha_3}\\[6pt]
\kappa_{\alpha^*} &= \frac{d\bm{A}_3}{ds^*}, \qquad \tau_{\alpha^*} = \frac{1}{\lambda}\tan\bm{A}_3, \label{eq:kappa_tau_star_3}\\[6pt]
\cos\bm{a}_3 &= \frac{ds^*}{ds}, \qquad \sec\bm{A}_{3}=\dfrac{ds}{ds^{*}}. \label{eq:ds_3}
\end{align}
\item
\begin{align}
T_{\alpha^*}(s^{*}) &= -\sinh\bm{A}_4(s^{*})\cdot T_\alpha(s(s^{*})) + \cosh\bm{A}_4(s^{*})\cdot B_\alpha(s(s^{*})), \label{eq:T_star_4}\\[6pt]
N_{\alpha^*}(s^{*}) &= -\cosh\bm{A}_4(s^{*})\cdot T_\alpha(s(s^{*})) + \sinh\bm{A}_4(s^{*})\cdot B_\alpha(s(s^{*})), \label{eq:N_star_4}\\[6pt]
T_\alpha(s) &= \sinh\bm{a}_4(s)\cdot T_{\alpha^*}(s^{*}(s)) - \cosh\bm{a}_4(s)\cdot N_{\alpha^*}(s^{*}(s)), \label{eq:T_alpha_4}\\[6pt]
B_\alpha(s) &= \cosh\bm{a}_4(s)\cdot T_{\alpha^*}(s^{*}(s)) - \sinh\bm{a}_4(s)\cdot N_{\alpha^{*}}(s^{*}(s)), \label{eq:B_alpha_4}\\[6pt]
\kappa_\alpha &= -\frac{1}{\lambda}\cosh^2\bm{a}_4, \qquad \tau_\alpha = \frac{1}{\lambda}\sinh\bm{a}_4\cosh\bm{a}_4, \label{eq:kappa_tau_alpha_4}\\[6pt]
\kappa_{\alpha^*} &= \frac{d\bm{A}_4}{ds^*}, \qquad \tau_{\alpha^*} = -\frac{1}{\lambda}\coth\bm{A}_4, \label{eq:kappa_tau_star_4}\\[6pt]
\sinh\bm{a}_4 &= \frac{ds^*}{ds}, \qquad \operatorname{csch}\bm{A}_{4}=\dfrac{ds}{ds^{*}}. \label{eq:ds_4}
\end{align}
\item
\begin{align}
T_{\alpha^*}(s^{*}) &= \cosh\bm{A}_5(s^{*})\cdot T_\alpha(s(s^{*})) - \sinh\bm{A}_5(s^{*})\cdot B_\alpha(s(s^{*})), \label{eq:T_star_5}\\[6pt]
N_{\alpha^*}(s^{*}) &= \sinh\bm{A}_5(s^{*})\cdot T_\alpha(s(s^{*})) - \cosh\bm{A}_5(s^{*})\cdot B_\alpha(s(s^{*})), \label{eq:N_star_5}\\[6pt]
T_\alpha(s) &= \cosh\bm{a}_5(s)\cdot T_{\alpha^*}(s^{*}(s)) - \sinh\bm{a}_5(s)\cdot N_{\alpha^*}(s^{*}(s)), \label{eq:T_alpha_5}\\[6pt]
B_\alpha(s) &= \sinh\bm{a}_5(s)\cdot T_{\alpha^*}(s^{*}(s)) - \cosh\bm{a}_5(s)\cdot N_{\alpha^{*}}(s^{*}(s)), \label{eq:B_alpha_5}\\[6pt]
\kappa_\alpha &= -\frac{1}{\lambda}\sinh^2\bm{a}_5, \qquad \tau_\alpha = \frac{1}{\lambda}\sinh\bm{a}_5\cosh\bm{a}_5, \label{eq:kappa_tau_alpha_5}\\[6pt]
\kappa_{\alpha^*} &= \frac{d\bm{A}_5}{ds^*}, \qquad \tau_{\alpha^*} = -\frac{1}{\lambda}\tanh\bm{A}_5, \label{eq:kappa_tau_star_5}\\[6pt]
\cosh\bm{a}_5 &= \frac{ds^*}{ds}, \qquad \operatorname{sech}\bm{A}_{5}=\dfrac{ds}{ds^{*}}. \label{eq:ds_5}
\end{align}
\end{enumerate}
\end{theorem}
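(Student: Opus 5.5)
The plan is to derive all five blocks of relations directly from the Mannheim condition $\alpha^*=\alpha+\lambda N_\alpha$ together with the normalization $N_\alpha=-B_{\alpha^*}$. The tools are the Frenet equations \eqref{eq:frenet 1}--\eqref{eq:frenet 3} for both curves, the causal characters fixed in Corollary~\ref{cor:angles}, and the sign conventions of Remark~\ref{rem:normalization}.

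\textbf{Step 1: $\lambda$ is constant.} Write $\alpha^*(s^*(s))=\alpha(s)+\lambda(s)N_\alpha(s)$ and differentiate in $s$:
\[
\frac{ds^*}{ds}T_{\alpha^*}=(1+\lambda\varepsilon_3\kappa_\alpha)T_\alpha+\lambda' N_\alpha+\lambda\varepsilon_3\tau_\alpha B_\alpha .
\]
Pairing with $N_\alpha=-B_{\alpha^*}$, which is orthogonal to $T_{\alpha^*}$, gives $\varepsilon_2\lambda'=0$. Hence $\lambda$ is a nonzero constant, which yields \eqref{eq:alpha_star} and \eqref{eq:alpha}.

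\textbf{Step 2: frame relations.} Since $T_{\alpha^*}$ and $N_{\alpha^*}$ are orthogonal to $B_{\alpha^*}=-N_\alpha$, both lie in $\operatorname{span}\{T_\alpha,B_\alpha\}$. This plane is spacelike or timelike according to the causal character of $N_\alpha$.
\begin{enumerate}
\item In each type, write $T_{\alpha^*}$ in this plane using the angle $\bm A_k$ of Corollary~\ref{cor:angles}. Use $\cos/\sin$ in Type~3 (spacelike plane) and $\cosh/\sinh$ otherwise, placed according to which of $T_\alpha,B_\alpha$ is timelike.
\item Obtain $N_{\alpha^*}$ from $N_{\alpha^*}=-\varepsilon\,B_{\alpha^*}\wedge T_{\alpha^*}$ via Corollary~\ref{condicao dos vaerpsilons}(3), with $B_{\alpha^*}=-N_\alpha$, and the cross products $B_\alpha\wedge T_\alpha$, $N_\alpha\wedge B_\alpha$ listed for the three frame cases.
\item Invert this $2\times2$ orthonormal change of basis to get $T_\alpha$ and $B_\alpha$ in terms of $T_{\alpha^*}$, $N_{\alpha^*}$, writing $\bm a_k(s)=\bm A_k(s^*(s))$.
\end{enumerate}
The time-cone and sign hypotheses of Corollary~\ref{cor:angles} fix the signs of these coefficients. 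The convention of signed angles in Types~2, 3, 5 absorbs the remaining sign.

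\textbf{Step 3: scalar relations.} Compare the equation of Step~1 (with $\lambda'=0$) with the expansion of $T_{\alpha^*}$ from Step~2:
\[
\frac{ds^*}{ds}\,c_T=1+\lambda\varepsilon_3\kappa_\alpha,\qquad \frac{ds^*}{ds}\,c_B=\lambda\varepsilon_3\tau_\alpha ,
\]
where $c_T,c_B$ are the coefficients of $T_\alpha,B_\alpha$. Next, differentiate \eqref{eq:alpha} with respect to $s^*$, using $\dot B_{\alpha^*}=-\varepsilon_2^*\tau_{\alpha^*}N_{\alpha^*}$:
\[
\frac{ds}{ds^*}T_\alpha=T_{\alpha^*}-\lambda\varepsilon_2^*\tau_{\alpha^*}N_{\alpha^*}.
\]
Compare this with the expression for $T_\alpha$ from Step~2. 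The $T_{\alpha^*}$-component gives $ds/ds^*$, e.g.\ $\operatorname{csch}\bm A_1=-ds/ds^*$. The ratio of the two components gives $\tau_{\alpha^*}$, e.g.\ $\frac1\lambda\coth\bm A_1$. Substituting $ds^*/ds$ back into the first pair gives $\kappa_\alpha$ and $\tau_\alpha$, e.g.\ in Type~1 $\kappa_\alpha=\frac1\lambda\cosh^2\bm a_1$ and $\tau_\alpha=\frac1\lambda\sinh\bm a_1\cosh\bm a_1$. These are consistent with the Mannheim relations quoted in the introduction.

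\textbf{Step 4: curvature of $\alpha^*$.} Differentiate the expression for $T_{\alpha^*}$ in $s^*$. Use the Frenet equations of $\alpha$ multiplied by $ds/ds^*$; the $N_\alpha$-terms combine as $(\kappa_\alpha c_T-\varepsilon_2\tau_\alpha c_B)\,ds/ds^*$. The relations of Step~3 should make this $N_\alpha$-component cancel exactly, leaving
\[
\dot T_{\alpha^*}=\frac{d\bm A_k}{ds^*}\,N_{\alpha^*}.
\]
Hence $\kappa_{\alpha^*}=d\bm A_k/ds^*$, where $\kappa_{\alpha^*}>0$ fixes the orientation.

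The main obstacle is sign bookkeeping, not the computation itself. The $\varepsilon_i$ differ between the frames of $\alpha$ and $\alpha^*$ in each of the five types, and each type's sign choices must be shown consistent with the normalization of Remark~\ref{rem:normalization}. I would handle this by writing Steps~2--4 once with symbolic $\varepsilon_i,\varepsilon_i^*$ and a generic pair $(c,s)$ of trigonometric or hyperbolic functions, then specializing to each type. The cancellation of the $N_\alpha$-component in Step~4 serves as a check on the signs.
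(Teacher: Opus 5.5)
Your proposal is correct. I checked it type by type, and it reproduces every relation in the statement with the right signs:
\begin{itemize}
\item The cone and sign conditions of Corollary~\ref{cor:angles} give the stated expansions of $T_{\alpha^*}$.
\item The formula $N_{\alpha^*}=\varepsilon_{1*}\,N_\alpha\wedge T_{\alpha^*}$ reproduces \eqref{eq:N_star_1}--\eqref{eq:N_star_5}.
\item Comparing $\frac{ds}{ds^*}T_\alpha=T_{\alpha^*}-\lambda\varepsilon_{2*}\tau_{\alpha^*}N_{\alpha^*}$ with the inverted frame gives \eqref{eq:ds_1}--\eqref{eq:ds_5} and the formulas for $\tau_{\alpha^*}$.
\item Back-substituting into $\frac{ds^*}{ds}c_T=1+\lambda\varepsilon_3\kappa_\alpha$ and $\frac{ds^*}{ds}c_B=\lambda\varepsilon_3\tau_\alpha$ yields exactly the stated $\kappa_\alpha$ and $\tau_\alpha$ in all five types.
\end{itemize}

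The paper gives no argument of its own here. It imports the relations from Theorems~1, 2 and~5 of \cite{ElsharkawyElshenhab} and re-expresses them in its own conventions through unspecified ``straightforward calculations.'' Citing is shorter, but it leaves the reader to reconcile the source's conventions with the ones used here. These include the Frenet sign conventions, the choice $N_\alpha=\pm B_{\alpha^*}$, and signed versus unsigned angles. Your self-contained computation from $\alpha^*=\alpha+\lambda N_\alpha$ and $N_\alpha=-B_{\alpha^*}$ shows which sign comes from which hypothesis. For example, the sign in $\sinh\bm a_1=-\frac{ds^*}{ds}$ comes from the time-cone condition of Type~1.

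Two small refinements:
\begin{itemize}
\item In Step~4 the $N_\alpha$-component of $\dot T_{\alpha^*}$ vanishes automatically, since $\dot T_{\alpha^*}\perp B_{\alpha^*}=-N_\alpha$. So there is nothing to prove there. It is only a consistency check, and it gives $\tau_\alpha/\kappa_\alpha=c_T/(\varepsilon_2c_B)$ independently.
\item $\kappa_{\alpha^*}=d\bm A_k/ds^*$, and hence $d\bm A_k/ds^*>0$, is not something you impose. It follows from the normalization $N_\alpha=-B_{\alpha^*}$, which makes your computed $N_{\alpha^*}$ the actual principal normal.
\end{itemize}
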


In the next result, for each type, the angles and the curvatures and torsions of one curve of the pair are written in terms of those of the other.

\begin{corollary}\label{corolario par s e par s*} Let $(\alpha(s),\alpha^{*}(s^{*}))$ be a Mannheim pair of Type~$k$ in $\mathbb{L}^3$, with the conventions of Theorem~\ref{lem:relationsL3}, and suppose $\dfrac{ds^{*}}{ds}>0$. Then $\tau_\alpha\neq0$ and, in all five types,
\begin{equation}\label{tau* e tau e kappa L3}
\tau_{\alpha^{*}}(s^{*})=\dfrac{1}{\lambda}\,\dfrac{\kappa_{\alpha}(s(s^{*}))}{\tau_{\alpha}(s(s^{*}))}.
\end{equation}
Moreover:
\begin{enumerate}[label=\textbf{Type \arabic*.}, leftmargin=*]

\item ($\lambda>0$, and then $\dfrac{\kappa_\alpha}{\tau_\alpha}<-1$ and $\bm a_1<0$)
\begin{equation}\label{eq:L3-1-A}
\cosh\bm A_1(s^{*})=\dfrac{-\dfrac{\kappa_{\alpha}(s(s^{*}))}{\tau_{\alpha}(s(s^{*}))}}{\sqrt{\left(\dfrac{\kappa_{\alpha}(s(s^{*}))}{\tau_{\alpha}(s(s^{*}))}\right)^2-1}},\qquad
\sinh\bm A_1(s^{*})=\dfrac{-1}{\sqrt{\left(\dfrac{\kappa_{\alpha}(s(s^{*}))}{\tau_{\alpha}(s(s^{*}))}\right)^2-1}},
\end{equation}
\begin{equation}\label{eq:L3-1-ds}
\dfrac{ds}{ds^{*}}(s^{*})=\sqrt{\left(\dfrac{\kappa_{\alpha}(s(s^{*}))}{\tau_{\alpha}(s(s^{*}))}\right)^2-1},
\end{equation}
\begin{equation}\label{eq:L3-1-kappa*}
\kappa_{\alpha^{*}}(s^{*})=\dfrac{-\dfrac{d}{ds^{*}}\left(\dfrac{\kappa_{\alpha}(s(s^{*}))}{\tau_{\alpha}(s(s^{*}))}\right)}{\left(\dfrac{\kappa_{\alpha}(s(s^{*}))}{\tau_{\alpha}(s(s^{*}))}\right)^2-1},
\end{equation}
\begin{equation}\label{mannheim equation 1}
\kappa_\alpha(s)^2-\tau_\alpha(s)^2=\dfrac{1}{\lambda}\kappa_\alpha(s),
\end{equation}
\begin{equation}\label{eq:L3-1-a}
\cosh\bm a_1(s)=\dfrac{-\lambda\tau_{\alpha^{*}}(s^{*}(s))}{\sqrt{\lambda^2\tau_{\alpha^{*}}(s^{*}(s))^2-1}},\qquad
\sinh\bm a_1(s)=\dfrac{-1}{\sqrt{\lambda^2\tau_{\alpha^{*}}(s^{*}(s))^2-1}},
\end{equation}
\begin{equation}\label{eq:L3-1-ds*}
\dfrac{ds^{*}}{ds}(s)=\dfrac{1}{\sqrt{\lambda^2\tau_{\alpha^{*}}(s^{*}(s))^2-1}},
\end{equation}
\begin{equation}\label{eq:L3-1-kt}
\kappa_\alpha(s)=\dfrac{\lambda\tau_{\alpha^{*}}(s^{*}(s))^2}{\lambda^2\tau_{\alpha^{*}}(s^{*}(s))^2-1},\qquad
\tau_\alpha(s)=\dfrac{\tau_{\alpha^{*}}(s^{*}(s))}{\lambda^2\tau_{\alpha^{*}}(s^{*}(s))^2-1},
\end{equation}
\begin{equation}\label{mannheim pair equation 1}
\kappa_{\alpha^{*}}(s^{*})=\dfrac{\lambda\,\dfrac{d}{ds^{*}}\left(\tau_{\alpha^{*}}(s^{*})\right)}{1-\lambda^2\tau_{\alpha^{*}}(s^{*})^2}.
\end{equation}

\item ($\lambda>0$, and then $\left|\dfrac{\kappa_\alpha}{\tau_\alpha}\right|<1$)
\begin{equation}\label{eq:L3-2-A}
\cosh\bm A_2(s^{*})=\dfrac{1}{\sqrt{1-\left(\dfrac{\kappa_{\alpha}(s(s^{*}))}{\tau_{\alpha}(s(s^{*}))}\right)^2}},\qquad
\sinh\bm A_2(s^{*})=\dfrac{\dfrac{\kappa_{\alpha}(s(s^{*}))}{\tau_{\alpha}(s(s^{*}))}}{\sqrt{1-\left(\dfrac{\kappa_{\alpha}(s(s^{*}))}{\tau_{\alpha}(s(s^{*}))}\right)^2}},
\end{equation}
\begin{equation}\label{eq:L3-2-ds}
\dfrac{ds}{ds^{*}}(s^{*})=\sqrt{1-\left(\dfrac{\kappa_{\alpha}(s(s^{*}))}{\tau_{\alpha}(s(s^{*}))}\right)^2},
\end{equation}
\begin{equation}\label{eq:L3-2-kappa*}
\kappa_{\alpha^{*}}(s^{*})=\dfrac{\dfrac{d}{ds^{*}}\left(\dfrac{\kappa_{\alpha}(s(s^{*}))}{\tau_{\alpha}(s(s^{*}))}\right)}{1-\left(\dfrac{\kappa_{\alpha}(s(s^{*}))}{\tau_{\alpha}(s(s^{*}))}\right)^2},
\end{equation}
\begin{equation}\label{mannheim equation 2}
\kappa_\alpha(s)^2-\tau_\alpha(s)^2=-\dfrac{1}{\lambda}\kappa_\alpha(s),
\end{equation}
\begin{equation}\label{eq:L3-2-a}
\cosh\bm a_2(s)=\dfrac{1}{\sqrt{1-\lambda^2\tau_{\alpha^{*}}(s^{*}(s))^2}},\qquad
\sinh\bm a_2(s)=\dfrac{\lambda\tau_{\alpha^{*}}(s^{*}(s))}{\sqrt{1-\lambda^2\tau_{\alpha^{*}}(s^{*}(s))^2}},
\end{equation}
\begin{equation}\label{eq:L3-2-ds*}
\dfrac{ds^{*}}{ds}(s)=\dfrac{1}{\sqrt{1-\lambda^2\tau_{\alpha^{*}}(s^{*}(s))^2}},
\end{equation}
\begin{equation}\label{eq:L3-2-kt}
\kappa_\alpha(s)=\dfrac{\lambda\tau_{\alpha^{*}}(s^{*}(s))^2}{1-\lambda^2\tau_{\alpha^{*}}(s^{*}(s))^2},\qquad
\tau_\alpha(s)=\dfrac{\tau_{\alpha^{*}}(s^{*}(s))}{1-\lambda^2\tau_{\alpha^{*}}(s^{*}(s))^2},
\end{equation}
\begin{equation}\label{mannheim pair equation 2}
\kappa_{\alpha^{*}}(s^{*})=\dfrac{\lambda\,\dfrac{d}{ds^{*}}\left(\tau_{\alpha^{*}}(s^{*})\right)}{1-\lambda^2\tau_{\alpha^{*}}(s^{*})^2}.
\end{equation}

\item ($\lambda<0$)
\begin{equation}\label{eq:L3-3-A}
\cos\bm A_3(s^{*})=\dfrac{1}{\sqrt{1+\left(\dfrac{\kappa_{\alpha}(s(s^{*}))}{\tau_{\alpha}(s(s^{*}))}\right)^2}},\qquad
\sin\bm A_3(s^{*})=\dfrac{\dfrac{\kappa_{\alpha}(s(s^{*}))}{\tau_{\alpha}(s(s^{*}))}}{\sqrt{1+\left(\dfrac{\kappa_{\alpha}(s(s^{*}))}{\tau_{\alpha}(s(s^{*}))}\right)^2}},
\end{equation}
\begin{equation}\label{eq:L3-3-ds}
\dfrac{ds}{ds^{*}}(s^{*})=\sqrt{1+\left(\dfrac{\kappa_{\alpha}(s(s^{*}))}{\tau_{\alpha}(s(s^{*}))}\right)^2},
\end{equation}
\begin{equation}\label{eq:L3-3-kappa*}
\kappa_{\alpha^{*}}(s^{*})=\dfrac{\dfrac{d}{ds^{*}}\left(\dfrac{\kappa_{\alpha}(s(s^{*}))}{\tau_{\alpha}(s(s^{*}))}\right)}{1+\left(\dfrac{\kappa_{\alpha}(s(s^{*}))}{\tau_{\alpha}(s(s^{*}))}\right)^2},
\end{equation}
\begin{equation}\label{mannheim equation 3}
\kappa_\alpha(s)^2+\tau_\alpha(s)^2=-\dfrac{1}{\lambda}\kappa_\alpha(s),
\end{equation}
\begin{equation}\label{eq:L3-3-a}
\cos\bm a_3(s)=\dfrac{1}{\sqrt{1+\lambda^2\tau_{\alpha^{*}}(s^{*}(s))^2}},\qquad
\sin\bm a_3(s)=\dfrac{\lambda\tau_{\alpha^{*}}(s^{*}(s))}{\sqrt{1+\lambda^2\tau_{\alpha^{*}}(s^{*}(s))^2}},
\end{equation}
\begin{equation}\label{eq:L3-3-ds*}
\dfrac{ds^{*}}{ds}(s)=\dfrac{1}{\sqrt{1+\lambda^2\tau_{\alpha^{*}}(s^{*}(s))^2}},
\end{equation}
\begin{equation}\label{eq:L3-3-kt}
\kappa_\alpha(s)=\dfrac{-\lambda\tau_{\alpha^{*}}(s^{*}(s))^2}{1+\lambda^2\tau_{\alpha^{*}}(s^{*}(s))^2},\qquad
\tau_\alpha(s)=\dfrac{-\tau_{\alpha^{*}}(s^{*}(s))}{1+\lambda^2\tau_{\alpha^{*}}(s^{*}(s))^2},
\end{equation}
\begin{equation}\label{mannheim pair equation 3}
\kappa_{\alpha^{*}}(s^{*})=\dfrac{\lambda\,\dfrac{d}{ds^{*}}\left(\tau_{\alpha^{*}}(s^{*})\right)}{1+\lambda^2\tau_{\alpha^{*}}(s^{*})^2}.
\end{equation}

\item ($\lambda<0$, and then $\dfrac{\kappa_\alpha}{\tau_\alpha}<-1$ and $\bm a_4>0$)
\begin{equation}\label{eq:L3-4-A}
\cosh\bm A_4(s^{*})=\dfrac{-\dfrac{\kappa_{\alpha}(s(s^{*}))}{\tau_{\alpha}(s(s^{*}))}}{\sqrt{\left(\dfrac{\kappa_{\alpha}(s(s^{*}))}{\tau_{\alpha}(s(s^{*}))}\right)^2-1}},\qquad
\sinh\bm A_4(s^{*})=\dfrac{1}{\sqrt{\left(\dfrac{\kappa_{\alpha}(s(s^{*}))}{\tau_{\alpha}(s(s^{*}))}\right)^2-1}},
\end{equation}
\begin{equation}\label{eq:L3-4-ds}
\dfrac{ds}{ds^{*}}(s^{*})=\sqrt{\left(\dfrac{\kappa_{\alpha}(s(s^{*}))}{\tau_{\alpha}(s(s^{*}))}\right)^2-1},
\end{equation}
\begin{equation}\label{eq:L3-4-kappa*}
\kappa_{\alpha^{*}}(s^{*})=\dfrac{\dfrac{d}{ds^{*}}\left(\dfrac{\kappa_{\alpha}(s(s^{*}))}{\tau_{\alpha}(s(s^{*}))}\right)}{\left(\dfrac{\kappa_{\alpha}(s(s^{*}))}{\tau_{\alpha}(s(s^{*}))}\right)^2-1},
\end{equation}
\begin{equation}\label{mannheim equation 4}
\kappa_\alpha(s)^2-\tau_\alpha(s)^2=-\dfrac{1}{\lambda}\kappa_\alpha(s),
\end{equation}
\begin{equation}\label{eq:L3-4-a}
\cosh\bm a_4(s)=\dfrac{-\lambda\tau_{\alpha^{*}}(s^{*}(s))}{\sqrt{\lambda^2\tau_{\alpha^{*}}(s^{*}(s))^2-1}},\qquad
\sinh\bm a_4(s)=\dfrac{1}{\sqrt{\lambda^2\tau_{\alpha^{*}}(s^{*}(s))^2-1}},
\end{equation}
\begin{equation}\label{eq:L3-4-ds*}
\dfrac{ds^{*}}{ds}(s)=\dfrac{1}{\sqrt{\lambda^2\tau_{\alpha^{*}}(s^{*}(s))^2-1}},
\end{equation}
\begin{equation}\label{eq:L3-4-kt}
\kappa_\alpha(s)=\dfrac{-\lambda\tau_{\alpha^{*}}(s^{*}(s))^2}{\lambda^2\tau_{\alpha^{*}}(s^{*}(s))^2-1},\qquad
\tau_\alpha(s)=\dfrac{-\tau_{\alpha^{*}}(s^{*}(s))}{\lambda^2\tau_{\alpha^{*}}(s^{*}(s))^2-1},
\end{equation}
\begin{equation}\label{mannheim pair equation 4}
\kappa_{\alpha^{*}}(s^{*})=\dfrac{-\lambda\,\dfrac{d}{ds^{*}}\left(\tau_{\alpha^{*}}(s^{*})\right)}{1-\lambda^2\tau_{\alpha^{*}}(s^{*})^2}.
\end{equation}

\item ($\lambda<0$, and then $\left|\dfrac{\kappa_\alpha}{\tau_\alpha}\right|<1$)
\begin{equation}\label{eq:L3-5-A}
\cosh\bm A_5(s^{*})=\dfrac{1}{\sqrt{1-\left(\dfrac{\kappa_{\alpha}(s(s^{*}))}{\tau_{\alpha}(s(s^{*}))}\right)^2}},\qquad
\sinh\bm A_5(s^{*})=\dfrac{-\dfrac{\kappa_{\alpha}(s(s^{*}))}{\tau_{\alpha}(s(s^{*}))}}{\sqrt{1-\left(\dfrac{\kappa_{\alpha}(s(s^{*}))}{\tau_{\alpha}(s(s^{*}))}\right)^2}},
\end{equation}
\begin{equation}\label{eq:L3-5-ds}
\dfrac{ds}{ds^{*}}(s^{*})=\sqrt{1-\left(\dfrac{\kappa_{\alpha}(s(s^{*}))}{\tau_{\alpha}(s(s^{*}))}\right)^2},
\end{equation}
\begin{equation}\label{eq:L3-5-kappa*}
\kappa_{\alpha^{*}}(s^{*})=\dfrac{-\dfrac{d}{ds^{*}}\left(\dfrac{\kappa_{\alpha}(s(s^{*}))}{\tau_{\alpha}(s(s^{*}))}\right)}{1-\left(\dfrac{\kappa_{\alpha}(s(s^{*}))}{\tau_{\alpha}(s(s^{*}))}\right)^2},
\end{equation}
\begin{equation}\label{mannheim equation 5}
\kappa_\alpha(s)^2-\tau_\alpha(s)^2=\dfrac{1}{\lambda}\kappa_\alpha(s),
\end{equation}
\begin{equation}\label{eq:L3-5-a}
\cosh\bm a_5(s)=\dfrac{1}{\sqrt{1-\lambda^2\tau_{\alpha^{*}}(s^{*}(s))^2}},\qquad
\sinh\bm a_5(s)=\dfrac{-\lambda\tau_{\alpha^{*}}(s^{*}(s))}{\sqrt{1-\lambda^2\tau_{\alpha^{*}}(s^{*}(s))^2}},
\end{equation}
\begin{equation}\label{eq:L3-5-ds*}
\dfrac{ds^{*}}{ds}(s)=\dfrac{1}{\sqrt{1-\lambda^2\tau_{\alpha^{*}}(s^{*}(s))^2}},
\end{equation}
\begin{equation}\label{eq:L3-5-kt}
\kappa_\alpha(s)=\dfrac{-\lambda\tau_{\alpha^{*}}(s^{*}(s))^2}{1-\lambda^2\tau_{\alpha^{*}}(s^{*}(s))^2},\qquad
\tau_\alpha(s)=\dfrac{-\tau_{\alpha^{*}}(s^{*}(s))}{1-\lambda^2\tau_{\alpha^{*}}(s^{*}(s))^2},
\end{equation}
\begin{equation}\label{mannheim pair equation 5}
\kappa_{\alpha^{*}}(s^{*})=\dfrac{-\lambda\,\dfrac{d}{ds^{*}}\left(\tau_{\alpha^{*}}(s^{*})\right)}{1-\lambda^2\tau_{\alpha^{*}}(s^{*})^2}.
\end{equation}

\end{enumerate}
\end{corollary}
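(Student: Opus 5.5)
The plan is to derive everything from Theorem~\ref{lem:relationsL3}, type by type, using only the formulas for $\kappa_\alpha,\tau_\alpha$ in terms of $\bm a_k$, the formulas for $\kappa_{\alpha^*},\tau_{\alpha^*}$ in terms of $\bm A_k$, and the relation between $ds^*/ds$ and the angle. Since $\bm a_k(s)=\bm A_k(s^*(s))$ is the same angle read in two parameters, every identity can be written in the single quantity $\bm a_k$.

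\textbf{Step 1: $\tau_\alpha\neq0$ and formula \eqref{tau* e tau e kappa L3}.} In each type $\kappa_\alpha/\tau_\alpha$ is a ratio such as $\cosh\bm a_1/\sinh\bm a_1=\coth\bm a_1$, $\tanh\bm a_2$, $\tan\bm a_3$, $-\coth\bm a_4$ or $-\tanh\bm a_5$. Comparing with $\tau_{\alpha^*}=\frac1\lambda\coth\bm A_1$ (and the analogues) gives \eqref{tau* e tau e kappa L3} immediately, provided $\tau_\alpha\neq0$. The nonvanishing comes from $ds^*/ds>0$:
\begin{itemize}
\item in Types~1 and~4, $\sinh\bm a_k=\mp ds^*/ds\neq0$;
\item in Types~2, 3 and~5, $\cosh\bm a_k$ (or $\cos\bm a_3$) equals $ds^*/ds>0$, and $\kappa_\alpha>0$ forces $\sinh\bm a_k\neq0$ (or $\sin\bm a_3\neq0$).
\end{itemize}
In all cases $\tau_\alpha\neq0$.

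\textbf{Step 2: signs of $\lambda$ and $\bm a_k$.} These come from $\kappa_\alpha>0$ and $ds^*/ds>0$:
\begin{itemize}
\item Type~1: $\kappa_\alpha=\frac1\lambda\cosh^2\bm a_1>0$ gives $\lambda>0$, and $\sinh\bm a_1=-ds^*/ds<0$ gives $\bm a_1<0$. Hence $\kappa_\alpha/\tau_\alpha=\coth\bm a_1<-1$.
\item Type~4: $\lambda<0$ and $\bm a_4>0$, so $\kappa_\alpha/\tau_\alpha=-\coth\bm a_4<-1$.
\item Types~2 and~5: $|\tanh|<1$, which gives $|\kappa_\alpha/\tau_\alpha|<1$.
\item Type~3: the sign of $\lambda$ follows from $\kappa_\alpha=-\frac1\lambda\sin^2\bm a_3>0$.
\end{itemize}

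\textbf{Step 3: angle, speed, and Mannheim equations.} Next, $\cosh\bm A_k$ and $\sinh\bm A_k$ are inverted from $\rho=\kappa_\alpha/\tau_\alpha$. In Type~1, $\coth\bm A_1=\rho$ with $\bm A_1<0$ gives $\sinh\bm A_1=-1/\sqrt{\rho^2-1}$ and $\cosh\bm A_1=-\rho/\sqrt{\rho^2-1}$. Then $ds/ds^*=-\operatorname{csch}\bm A_1=\sqrt{\rho^2-1}$.

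The formula for $\kappa_{\alpha^*}=d\bm A_k/ds^*$ comes from differentiating the relation $\coth\bm A_1=\rho$ (respectively $\tanh$, $\tan$) with respect to $s^*$:
\[
-\operatorname{csch}^2\bm A_1\,\bm A_1'=\rho', \qquad \operatorname{csch}^2\bm A_1=\frac{1}{\rho^2-1}\ \Longrightarrow\ \bm A_1'=-\frac{\rho'}{\rho^2-1}.
\]
The Mannheim equations \eqref{mannheim equation 1} and its analogues are one line each. For instance, in Type~1,
\[
\kappa_\alpha^2-\tau_\alpha^2=\frac{1}{\lambda^2}\cosh^2\bm a_1\bigl(\cosh^2\bm a_1-\sinh^2\bm a_1\bigr)=\frac1\lambda\kappa_\alpha .
\]

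\textbf{Step 4: the partner-side formulas.} Substitute $\rho=1/(\lambda\tau_{\alpha^*})$, which is \eqref{tau* e tau e kappa L3}, into the Step~3 expressions. This gives $\cosh\bm a_k$, $\sinh\bm a_k$ and $ds^*/ds=1/(ds/ds^*)$. Then $\kappa_\alpha$ and $\tau_\alpha$ follow from the Theorem's expressions in $\bm a_k$. Finally, $\kappa_{\alpha^*}$ is obtained by differentiating $\rho=1/(\lambda\tau_{\alpha^*})$ inside the Step~3 formula for $\kappa_{\alpha^*}$.

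The main obstacle is sign bookkeeping: choosing the correct branch of the square roots and tracking the signs of $\lambda$, $\tau_{\alpha^*}$ and $\bm A_k$ in each type, for example checking $-\lambda\tau_{\alpha^*}>0$ in Types~1 and~4. I would handle this by fixing the sign of $\bm a_k$ first, from Step~2, and always recovering $\sinh$, $\sin$ and $\cosh$ from $ds^*/ds>0$ rather than from squared identities.
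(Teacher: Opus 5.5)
Your route is the paper's: write $\rho=\kappa_\alpha/\tau_\alpha$ as $\coth\bm a_1$, $\tanh\bm a_2$, $\tan\bm a_3$, $-\coth\bm a_4$, $-\tanh\bm a_5$, fix the signs from $\kappa_\alpha>0$ and $\frac{ds^*}{ds}>0$, invert, differentiate, and then pass to the partner side. Steps~1--3 are correct apart from one slip in Step~3. Since $\sinh^2\bm A_1=1/(\rho^2-1)$, one has $\operatorname{csch}^2\bm A_1=\rho^2-1$, not $1/(\rho^2-1)$. Your conclusion $\frac{d\bm A_1}{ds^*}=-\frac{d\rho/ds^*}{\rho^2-1}$ holds only with the correct value.

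Step~4, however, fails as written. Formula \eqref{tau* e tau e kappa L3} says $\tau_{\alpha^*}=\rho/\lambda$, i.e.\ $\rho=\lambda\tau_{\alpha^*}$; this is the paper's identity $r(s)=\lambda\tau_{\alpha^*}(s^*(s))$. It does not say $\rho=1/(\lambda\tau_{\alpha^*})$, and the reciprocal substitution does not produce the stated partner-side formulas.
\begin{itemize}
\item In Type~1, $\sinh\bm a_1=-1/\sqrt{\rho^2-1}$ would become $-|\lambda\tau_{\alpha^*}|/\sqrt{1-\lambda^2\tau_{\alpha^*}^2}$ instead of $-1/\sqrt{\lambda^2\tau_{\alpha^*}^2-1}$. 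This is even incompatible with $|\lambda\tau_{\alpha^*}|=|\coth\bm A_1|>1$.
\item In Types~2 and~5 you would get $\cosh\bm a_k=|\lambda\tau_{\alpha^*}|/\sqrt{\lambda^2\tau_{\alpha^*}^2-1}$ instead of $1/\sqrt{1-\lambda^2\tau_{\alpha^*}^2}$. The resulting $\frac{ds^*}{ds}$, $\kappa_\alpha$ and $\tau_\alpha$ inherit the same error.
\item In Type~3, inserting $\rho=1/(\lambda\tau_{\alpha^*})$ into $\kappa_{\alpha^*}=\frac{d\rho/ds^*}{1+\rho^2}$ gives $-\lambda\frac{d\tau_{\alpha^*}}{ds^*}/(1+\lambda^2\tau_{\alpha^*}^2)$, the opposite sign of \eqref{mannheim pair equation 3}.
\end{itemize}
In the hyperbolic types the $\kappa_{\alpha^*}$ formula happens to be invariant under $\rho\mapsto1/\rho$, which can hide the mistake, but the angle, speed, curvature and torsion formulas are not. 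With $\rho=\lambda\tau_{\alpha^*}$ your Step~4 goes through and coincides with the paper's argument.
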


\begin{proof}
Throughout, $\bm a_k(s)=\bm A_k(s^*(s))$, $r(s)=\kappa_\alpha(s)/\tau_\alpha(s)$, $\tau_{\alpha^*}'=\frac{d\tau_{\alpha^*}}{ds^*}$, and the formulas of Theorem~\ref{lem:relationsL3} for Type~$k$ are used without further mention. Since $\frac{ds^*}{ds}>0$, the map $s\mapsto s^*(s)$ is a diffeomorphism onto an interval, so an identity at $s$ is the same as an identity at $s^*=s^*(s)$.

\emph{Sign of $\lambda$.} Since $\kappa_\alpha>0$, the expressions of $\kappa_\alpha$ give $\lambda>0$ in Types~1 and~2, and $\lambda<0$ in Types~3, 4 and~5.

\emph{$\tau_\alpha\neq 0$.} In Types~1 and~4, $\sinh\bm a_k=\mp\frac{ds^*}{ds}\neq 0$; in Types~2 and~5, $\kappa_\alpha>0$ gives $\sinh\bm a_k\neq 0$; in Type~3, $\kappa_\alpha>0$ gives $\sin\bm a_3\neq 0$, and $\cos\bm a_3=\frac{ds^*}{ds}>0$.

\emph{Differentiability of $\bm A_k$.} From the expressions of $\tau_{\alpha^*}$, $\bm A_1=\operatorname{arccoth}(\lambda\tau_{\alpha^*})$, $\bm A_2=\operatorname{arctanh}(\lambda\tau_{\alpha^*})$, $\bm A_3=\arctan(\lambda\tau_{\alpha^*})$ (since $\cos\bm A_3=\frac{ds^*}{ds}>0$), $\bm A_4=\operatorname{arccoth}(-\lambda\tau_{\alpha^*})$ and $\bm A_5=\operatorname{arctanh}(-\lambda\tau_{\alpha^*})$. In particular $\bm A_k$ is differentiable.

\begin{enumerate}
\item \textbf{Formula for $\tau_{\alpha^*}$.}
\begin{align*}
\text{Type 1:}\quad &\dfrac{1}{\lambda}\dfrac{\kappa_{\alpha}}{\tau_{\alpha}}=\dfrac{1}{\lambda}\dfrac{\frac{1}{\lambda}\cosh^2\bm a_1}{\frac{1}{\lambda}\sinh\bm a_1\cosh\bm a_1}=\dfrac{1}{\lambda}\coth\bm a_1=\dfrac{1}{\lambda}\coth\bm A_1=\tau_{\alpha^*}(s^*(s)),\\
\text{Type 2:}\quad &\dfrac{1}{\lambda}\dfrac{\frac{1}{\lambda}\sinh^2\bm a_2}{\frac{1}{\lambda}\sinh\bm a_2\cosh\bm a_2}=\dfrac{1}{\lambda}\tanh\bm a_2=\dfrac{1}{\lambda}\tanh\bm A_2=\tau_{\alpha^*}(s^*(s)),\\
\text{Type 3:}\quad &\dfrac{1}{\lambda}\dfrac{-\frac{1}{\lambda}\sin^2\bm a_3}{-\frac{1}{\lambda}\sin\bm a_3\cos\bm a_3}=\dfrac{1}{\lambda}\tan\bm a_3=\dfrac{1}{\lambda}\tan\bm A_3=\tau_{\alpha^*}(s^*(s)),\\
\text{Type 4:}\quad &\dfrac{1}{\lambda}\dfrac{-\frac{1}{\lambda}\cosh^2\bm a_4}{\frac{1}{\lambda}\sinh\bm a_4\cosh\bm a_4}=-\dfrac{1}{\lambda}\coth\bm a_4=-\dfrac{1}{\lambda}\coth\bm A_4=\tau_{\alpha^*}(s^*(s)),\\
\text{Type 5:}\quad &\dfrac{1}{\lambda}\dfrac{-\frac{1}{\lambda}\sinh^2\bm a_5}{\frac{1}{\lambda}\sinh\bm a_5\cosh\bm a_5}=-\dfrac{1}{\lambda}\tanh\bm a_5=-\dfrac{1}{\lambda}\tanh\bm A_5=\tau_{\alpha^*}(s^*(s)).
\end{align*}
In particular
\begin{equation}\label{eq:r_values}
r=\coth\bm a_1,\ \ \tanh\bm a_2,\ \ \tan\bm a_3,\ \ -\coth\bm a_4,\ \ -\tanh\bm a_5\quad\text{(Types 1--5)},\qquad r(s)=\lambda\,\tau_{\alpha^*}(s^*(s)).
\end{equation}

\item \textbf{$\bm a_k(s)$ in terms of $r$ and $\frac{ds^*}{ds}$; hence $\cosh\bm A_k,\sinh\bm A_k$ (or $\cos\bm A_3,\sin\bm A_3$).} Using \eqref{eq:r_values} and the expression of $\frac{ds^*}{ds}$ in the theorem:
\begin{enumerate}[label=\textbf{Type \arabic*.}, leftmargin=*]
\item $\frac{ds^*}{ds}=-\sinh\bm a_1>0$, so $\bm a_1<0$ and $r=\coth\bm a_1<-1$. From $\coth^2-1=1/\sinh^2$: $\sinh\bm a_1=\frac{-1}{\sqrt{r^2-1}}$, $\cosh\bm a_1=r\sinh\bm a_1=\frac{-r}{\sqrt{r^2-1}}$, $\frac{ds^*}{ds}=\frac{1}{\sqrt{r^2-1}}$.
\item $\frac{ds^*}{ds}=\cosh\bm a_2>0$ and $1-\tanh^2=1/\cosh^2$: $\cosh\bm a_2=\frac{1}{\sqrt{1-r^2}}$, $\sinh\bm a_2=r\cosh\bm a_2=\frac{r}{\sqrt{1-r^2}}$, $\frac{ds^*}{ds}=\frac{1}{\sqrt{1-r^2}}$.
\item $\frac{ds^*}{ds}=\cos\bm a_3>0$ and $1+\tan^2=1/\cos^2$: $\cos\bm a_3=\frac{1}{\sqrt{1+r^2}}$, $\sin\bm a_3=r\cos\bm a_3=\frac{r}{\sqrt{1+r^2}}$, $\frac{ds^*}{ds}=\frac{1}{\sqrt{1+r^2}}$.
\item $\frac{ds^*}{ds}=\sinh\bm a_4>0$, so $\bm a_4>0$ and $r=-\coth\bm a_4<-1$. Then $\sinh\bm a_4=\frac{1}{\sqrt{r^2-1}}$, $\cosh\bm a_4=-r\sinh\bm a_4=\frac{-r}{\sqrt{r^2-1}}$, $\frac{ds^*}{ds}=\frac{1}{\sqrt{r^2-1}}$.
\item $\frac{ds^*}{ds}=\cosh\bm a_5>0$: $\cosh\bm a_5=\frac{1}{\sqrt{1-r^2}}$, $\sinh\bm a_5=-r\cosh\bm a_5=\frac{-r}{\sqrt{1-r^2}}$, $\frac{ds^*}{ds}=\frac{1}{\sqrt{1-r^2}}$.
\end{enumerate}
Evaluating at $s=s(s^*)$ gives the formulas for $\bm A_k(s^*)=\bm a_k(s(s^*))$.

\item \textbf{Formula for $\frac{ds}{ds^*}$.} Since $\frac{ds}{ds^*}=\left(\frac{ds^*}{ds}\right)^{-1}$, item~2 gives, at $s=s(s^*)$, $\frac{ds}{ds^*}=\sqrt{r^2-1}$ in Types~1 and~4, $\sqrt{1-r^2}$ in Types~2 and~5, and $\sqrt{1+r^2}$ in Type~3.

\item \textbf{Formula for $\kappa_{\alpha^*}(s^*)$.} By \eqref{eq:r_values}, $r(s(s^*))=\coth\bm A_1,\allowbreak\ \tanh\bm A_2,\allowbreak\ \tan\bm A_3,\allowbreak\ -\coth\bm A_4,\allowbreak\ -\tanh\bm A_5$. Differentiating with respect to $s^*$ and using $\kappa_{\alpha^*}=\frac{d\bm A_k}{ds^*}$:
\begin{align*}
\text{Type 1:}\quad &\dfrac{d}{ds^*}r(s(s^*))=-\operatorname{csch}^2\bm A_1\,\kappa_{\alpha^*}=-(r^2-1)\,\kappa_{\alpha^*},\\
\text{Type 2:}\quad &\dfrac{d}{ds^*}r(s(s^*))=\operatorname{sech}^2\bm A_2\,\kappa_{\alpha^*}=(1-r^2)\,\kappa_{\alpha^*},\\
\text{Type 3:}\quad &\dfrac{d}{ds^*}r(s(s^*))=\sec^2\bm A_3\,\kappa_{\alpha^*}=(1+r^2)\,\kappa_{\alpha^*},\\
\text{Type 4:}\quad &\dfrac{d}{ds^*}r(s(s^*))=\operatorname{csch}^2\bm A_4\,\kappa_{\alpha^*}=(r^2-1)\,\kappa_{\alpha^*},\\
\text{Type 5:}\quad &\dfrac{d}{ds^*}r(s(s^*))=-\operatorname{sech}^2\bm A_5\,\kappa_{\alpha^*}=-(1-r^2)\,\kappa_{\alpha^*},
\end{align*}
where we used $\operatorname{csch}^2=\coth^2-1$, $\operatorname{sech}^2=1-\tanh^2$ and $\sec^2=1+\tan^2$. The factors $r^2-1$ (Types~1, 4), $1-r^2$ (Types~2, 5) and $1+r^2$ (Type~3) do not vanish, since $|r|>1$ in Types~1 and~4 and $|r|<1$ in Types~2 and~5 (item~2). Solving for $\kappa_{\alpha^*}$ gives the formulas.

\item \textbf{Mannheim equation.} It follows directly from the expressions of $\kappa_\alpha,\tau_\alpha$ in Theorem~\ref{lem:relationsL3}:
\begin{align*}
\text{Types 1, 4:}\quad & \kappa_\alpha^2-\tau_\alpha^2=\dfrac{1}{\lambda^2}\cosh^2\bm a_k\,(\cosh^2\bm a_k-\sinh^2\bm a_k)=\dfrac{1}{\lambda^2}\cosh^2\bm a_k\\
& \qquad=\pm\dfrac{1}{\lambda}\kappa_\alpha\ \ (+\ \text{Type 1},\ -\ \text{Type 4}),\\
\text{Types 2, 5:}\quad & \kappa_\alpha^2-\tau_\alpha^2=\dfrac{1}{\lambda^2}\sinh^2\bm a_k\,(\sinh^2\bm a_k-\cosh^2\bm a_k)=-\dfrac{1}{\lambda^2}\sinh^2\bm a_k\\
& \qquad=\mp\dfrac{1}{\lambda}\kappa_\alpha\ \ (-\ \text{Type 2},\ +\ \text{Type 5}),\\
\text{Type 3:}\quad & \kappa_\alpha^2+\tau_\alpha^2=\dfrac{1}{\lambda^2}\sin^2\bm a_3\,(\sin^2\bm a_3+\cos^2\bm a_3)=\dfrac{1}{\lambda^2}\sin^2\bm a_3=-\dfrac{1}{\lambda}\kappa_\alpha .
\end{align*}

\item \textbf{$\cosh\bm a_k,\sinh\bm a_k$ (or $\cos\bm a_3,\sin\bm a_3$).} By \eqref{eq:r_values}, $r(s)=\lambda\tau_{\alpha^*}(s^*(s))$. Hence the formulas of item~2 for $\bm a_k(s)$, with $r$ replaced by $\lambda\tau_{\alpha^*}(s^*(s))$, give the stated expressions.

\item \textbf{Formula for $\frac{ds^*}{ds}$.} By item~2 with $r=\lambda\tau_{\alpha^*}(s^*(s))$: $\frac{ds^*}{ds}=\frac{1}{\sqrt{\lambda^2\tau_{\alpha^*}^2-1}}$ in Types~1 and~4, $\frac{1}{\sqrt{1-\lambda^2\tau_{\alpha^*}^2}}$ in Types~2 and~5, and $\frac{1}{\sqrt{1+\lambda^2\tau_{\alpha^*}^2}}$ in Type~3.

\item \textbf{Formula for $\kappa_\alpha(s)$.} From the expressions of $\kappa_\alpha$ in Theorem~\ref{lem:relationsL3} and item~6, with $\tau_{\alpha^*}=\tau_{\alpha^*}(s^*(s))$:
\begin{align*}
\text{Type 1:}\quad &\kappa_\alpha=\dfrac{1}{\lambda}\cosh^2\bm a_1=\dfrac{\lambda\tau_{\alpha^*}^2}{\lambda^2\tau_{\alpha^*}^2-1},\\
\text{Type 2:}\quad &\kappa_\alpha=\dfrac{1}{\lambda}\sinh^2\bm a_2=\dfrac{\lambda\tau_{\alpha^*}^2}{1-\lambda^2\tau_{\alpha^*}^2},\\
\text{Type 3:}\quad &\kappa_\alpha=-\dfrac{1}{\lambda}\sin^2\bm a_3=\dfrac{-\lambda\tau_{\alpha^*}^2}{1+\lambda^2\tau_{\alpha^*}^2},\\
\text{Type 4:}\quad &\kappa_\alpha=-\dfrac{1}{\lambda}\cosh^2\bm a_4=\dfrac{-\lambda\tau_{\alpha^*}^2}{\lambda^2\tau_{\alpha^*}^2-1},\\
\text{Type 5:}\quad &\kappa_\alpha=-\dfrac{1}{\lambda}\sinh^2\bm a_5=\dfrac{-\lambda\tau_{\alpha^*}^2}{1-\lambda^2\tau_{\alpha^*}^2}.
\end{align*}

\item \textbf{Formula for $\tau_\alpha(s)$.} From the expressions of $\tau_\alpha$ in Theorem~\ref{lem:relationsL3} and item~6, which fixes the signs:
\begin{align*}
\text{Type 1:}\quad &\tau_\alpha=\dfrac{1}{\lambda}\sinh\bm a_1\cosh\bm a_1=\dfrac{\tau_{\alpha^*}}{\lambda^2\tau_{\alpha^*}^2-1},\\
\text{Type 2:}\quad &\tau_\alpha=\dfrac{1}{\lambda}\sinh\bm a_2\cosh\bm a_2=\dfrac{\tau_{\alpha^*}}{1-\lambda^2\tau_{\alpha^*}^2},\\
\text{Type 3:}\quad &\tau_\alpha=-\dfrac{1}{\lambda}\sin\bm a_3\cos\bm a_3=\dfrac{-\tau_{\alpha^*}}{1+\lambda^2\tau_{\alpha^*}^2},\\
\text{Type 4:}\quad &\tau_\alpha=\dfrac{1}{\lambda}\sinh\bm a_4\cosh\bm a_4=\dfrac{-\tau_{\alpha^*}}{\lambda^2\tau_{\alpha^*}^2-1},\\
\text{Type 5:}\quad &\tau_\alpha=\dfrac{1}{\lambda}\sinh\bm a_5\cosh\bm a_5=\dfrac{-\tau_{\alpha^*}}{1-\lambda^2\tau_{\alpha^*}^2}.
\end{align*}

\item \textbf{Mannheim pair equation.} By Theorem~\ref{lem:relationsL3}, $\kappa_{\alpha^*}=\frac{d\bm A_k}{ds^*}$, and $\lambda\tau_{\alpha^*}=\coth\bm A_1,\allowbreak\ \tanh\bm A_2,\allowbreak\ \tan\bm A_3,\allowbreak\ -\coth\bm A_4,\allowbreak\ -\tanh\bm A_5$. Differentiating in $s^*$:
\begin{align*}
\text{Type 1:}\quad &\lambda\tau_{\alpha^*}'=-\dfrac{\kappa_{\alpha^*}}{\sinh^2\bm A_1}=(1-\lambda^2\tau_{\alpha^*}^2)\,\kappa_{\alpha^*}, \\
\text{Type 2:}\quad &\lambda\tau_{\alpha^*}'=\dfrac{\kappa_{\alpha^*}}{\cosh^2\bm A_2}=(1-\lambda^2\tau_{\alpha^*}^2)\,\kappa_{\alpha^*}, \\
\text{Type 3:}\quad &\lambda\tau_{\alpha^*}'=\dfrac{\kappa_{\alpha^*}}{\cos^2\bm A_3}=(1+\lambda^2\tau_{\alpha^*}^2)\,\kappa_{\alpha^*}, \\
\text{Type 4:}\quad &\lambda\tau_{\alpha^*}'=\dfrac{\kappa_{\alpha^*}}{\sinh^2\bm A_4}=(\lambda^2\tau_{\alpha^*}^2-1)\,\kappa_{\alpha^*}, \\
\text{Type 5:}\quad &\lambda\tau_{\alpha^*}'=-\dfrac{\kappa_{\alpha^*}}{\cosh^2\bm A_5}=-(1-\lambda^2\tau_{\alpha^*}^2)\,\kappa_{\alpha^*}.
\end{align*}
In Types~1 and~4, $1-\lambda^2\tau_{\alpha^*}^2=-1/\sinh^2\bm A_k\neq0$; in Types~2 and~5, $1-\lambda^2\tau_{\alpha^*}^2=1/\cosh^2\bm A_k\neq0$; in Type~3, $1+\lambda^2\tau_{\alpha^*}^2>0$. Solving for $\kappa_{\alpha^*}$ gives the stated formulas. For Types~1, 2, 3, 4 and~5 these are \eqref{mannheim pair equation 1}, \eqref{mannheim pair equation 2}, \eqref{mannheim pair equation 3}, \eqref{mannheim pair equation 4} and \eqref{mannheim pair equation 5}, respectively.
\end{enumerate}
\end{proof}

The following proposition is the Lorentzian version, for the five types, of the known Euclidean fact that a Mannheim curve is a slant helix if and only if its partner is a helix \cite{Ucum}.

\begin{proposition}\label{prop:slant_helix_L3}
Let $(\alpha,\alpha^*)$ be a Mannheim pair of Type~$k$ in $\mathbb{L}^3$, with the conventions of Theorem~\ref{lem:relationsL3}, $\kappa_{\alpha^*}\neq0$ and $\frac{ds^*}{ds}>0$. Let $'=d/ds$ and define
\[
\sigma_k=\frac{\kappa_\alpha^2}{(\kappa_\alpha^2-\tau_\alpha^2)^{3/2}}\Big(\frac{\tau_\alpha}{\kappa_\alpha}\Big)'\ (k=1,4),\qquad
\sigma_k=\frac{\kappa_\alpha^2}{(\tau_\alpha^2-\kappa_\alpha^2)^{3/2}}\Big(\frac{\tau_\alpha}{\kappa_\alpha}\Big)'\ (k=2,5),
\]
\[
\sigma_3=\frac{\kappa_\alpha^2}{(\kappa_\alpha^2+\tau_\alpha^2)^{3/2}}\Big(\frac{\tau_\alpha}{\kappa_\alpha}\Big)' .
\]
Then $\alpha$ is not a helix, $\sigma_k\neq0$, and
\begin{equation}\label{eq:tau_kappa_sigma}
\frac{\tau_{\alpha^*}}{\kappa_{\alpha^*}}(s^*(s))=\frac{\eta_k}{\sigma_k(s)},
\end{equation}
where $\eta_1=\eta_4=-1$, $\eta_2=\eta_5=-\operatorname{sgn}\tau_\alpha$ and $\eta_3=\operatorname{sgn}\tau_\alpha$ ($\operatorname{sgn}\tau_\alpha$ is constant, since $\tau_\alpha\neq0$). Consequently, $\alpha$ is a slant helix if and only if $\alpha^*$ is a helix.
\end{proposition}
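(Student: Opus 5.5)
The plan is to express everything through $r=\kappa_\alpha/\tau_\alpha$ and the angle $\bm a_k$, using Theorem~\ref{lem:relationsL3} and Corollary~\ref{corolario par s e par s*}.

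\textbf{Step 1: the relevant quantities in terms of $\bm a_k$.} By \eqref{eq:r_values}, $\tau_\alpha/\kappa_\alpha=1/r$ equals $\tanh\bm a_1$, $\coth\bm a_2$, $\cot\bm a_3$, $-\tanh\bm a_4$, $-\coth\bm a_5$. Differentiating in $s$ and using $\bm a_k=\bm A_k\circ s^*$, we get $\bm a_k'=\kappa_{\alpha^*}\,\frac{ds^*}{ds}$. For instance, in Type~1 this gives $(\tau_\alpha/\kappa_\alpha)'=\operatorname{sech}^2\bm a_1\,\kappa_{\alpha^*}\frac{ds^*}{ds}$.

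The prefactors also simplify via Theorem~\ref{lem:relationsL3}. In Type~1, $\kappa_\alpha^2-\tau_\alpha^2=\lambda^{-2}\cosh^2\bm a_1$, so
\[
\frac{\kappa_\alpha^2}{(\kappa_\alpha^2-\tau_\alpha^2)^{3/2}}=\frac{\lambda^{-2}\cosh^4\bm a_1}{|\lambda|^{-3}\cosh^3\bm a_1}=|\lambda|\cosh\bm a_1 .
\]
Analogously, the prefactor equals $|\lambda|\sinh^2\bm a_2/|\sinh\bm a_2|$ in Type~2 and $|\lambda|\sin^2\bm a_3/|\sin\bm a_3|$ in Type~3, with similar expressions in Types~4 and~5.

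\textbf{Step 2: compute $\sigma_k$.} Combining the prefactor with the derivative and inserting $\frac{ds^*}{ds}$ from \eqref{eq:ds_1}--\eqref{eq:ds_5} (e.g.\ $-\sinh\bm a_1$ in Type~1), one expects
\[
\sigma_1=-|\lambda|\tanh\bm a_1\,\kappa_{\alpha^*}=-\lambda\tanh\bm A_1\,\kappa_{\alpha^*}.
\]
On the other hand, $\tau_{\alpha^*}=\frac1\lambda\coth\bm A_1$, so $\tau_{\alpha^*}/\kappa_{\alpha^*}=\coth\bm A_1/(\lambda\kappa_{\alpha^*})=-1/\sigma_1$, which is $\eta_1=-1$. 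In Types~2, 3 and~5 the factors $|\sinh\bm a_k|$ and $|\sin\bm a_3|$ produce $\operatorname{sgn}\sinh\bm a_k$ or $\operatorname{sgn}\sin\bm a_3$. These coincide with $\pm\operatorname{sgn}\tau_\alpha$, because $\tau_\alpha=\frac1\lambda\sinh\bm a_k\cosh\bm a_k$ (resp.\ $-\frac1\lambda\sin\bm a_3\cos\bm a_3$ with $\cos\bm a_3>0$). The sign of $\lambda$ from Corollary~\ref{corolario par s e par s*} then fixes $\eta_k$ as stated. Type~4 parallels Type~1 with $\lambda<0$ and $\frac{ds^*}{ds}=\sinh\bm a_4$.

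\textbf{Main obstacle.} The hard part is the sign bookkeeping: $|\lambda|$ versus $\lambda$, $\operatorname{sgn}$ of the hyperbolic or trigonometric factors, and the sign of $\frac{ds^*}{ds}$, in each type separately. I would tabulate these once per type before combining.

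\textbf{Step 3: conclusions.} Since $\kappa_{\alpha^*}\neq0$, and $\frac{ds^*}{ds}\neq0$ with the hyperbolic/trigonometric factors nonzero, the formula gives $\sigma_k\neq0$. Hence $(\tau_\alpha/\kappa_\alpha)'\neq0$, and by Theorem~\ref{helix characterization} $\alpha$ is not a helix. Equation \eqref{eq:tau_kappa_sigma} shows that $\sigma_k$ is constant iff $\tau_{\alpha^*}/\kappa_{\alpha^*}$ is constant, since $s^*(s)$ is a diffeomorphism. The quantity $\sigma_k$ is, up to sign, the expression of Theorem~\ref{slant helix characterization}: note $\varepsilon_2=\pm1$ according to the causal character of $N_\alpha$ in each type, and $\kappa_\alpha^2-\varepsilon_2\tau_\alpha^2\neq0$ because it equals $\pm\kappa_\alpha/\lambda$ by the Mannheim equations. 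Therefore $\alpha$ is a slant helix iff $\sigma_k$ is constant, iff $\alpha^*$ is a helix, again by Theorem~\ref{helix characterization}.
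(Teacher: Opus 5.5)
Your proposal is correct and follows essentially the same strategy as the paper: write $\sigma_k$ and $\tau_{\alpha^*}/\kappa_{\alpha^*}$ in terms of a single function, track the signs type by type, and conclude with the Ali--L\'opez criterion and Lancret's theorem. The differences are cosmetic. You work with the angle $\bm a_k$ and the formulas of Theorem~\ref{lem:relationsL3} separately in each type; your sign bookkeeping gives the stated $\eta_k$ in all five types. The paper instead uses $r=\kappa_\alpha/\tau_\alpha$, $q_k>0$ and sign constants $\delta_k,\theta_k$ to get the uniform formula $\sigma_k=-|\lambda|\,r'/(|r|\sqrt{q_k})$. It also shows $\kappa_\alpha^2-\varepsilon_2\tau_\alpha^2\neq0$ via $|r|\neq1$, where you use the Mannheim equations.
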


\begin{proof}
Recall that a non-null curve is a helix if and only if $\tau/\kappa$ is constant, and that, in the types of \cite{ElsharkawyElshenhab}, $\alpha$ is spacelike with spacelike normal in Types~1 and~5, timelike in Types~2 and~4, and spacelike with timelike normal in Type~3.

Let $r=\kappa_\alpha/\tau_\alpha$; it never vanishes, since $\kappa_\alpha>0$, and $\operatorname{sgn}r=\operatorname{sgn}\tau_\alpha$. Put
\[
q_k=r^2-1\ (k=1,4),\qquad q_k=1-r^2\ (k=2,5),\qquad q_k=1+r^2\ (k=3);
\]
by Corollary~\ref{corolario par s e par s*}, $q_k>0$. Let $\delta_k=-1$ for $k=1,5$ and $\delta_k=1$ for $k=2,3,4$, and $\theta_k=1$ for $k=1,2$ and $\theta_k=-1$ for $k=3,4,5$.

\emph{Step 1.} By \eqref{tau* e tau e kappa L3}, $\tau_{\alpha^*}=r/\lambda$. By \eqref{eq:L3-1-ds}, \eqref{eq:L3-2-ds}, \eqref{eq:L3-3-ds}, \eqref{eq:L3-4-ds}, \eqref{eq:L3-5-ds}, $\frac{ds}{ds^*}=\sqrt{q_k}$, and by \eqref{eq:L3-1-kappa*}, \eqref{eq:L3-2-kappa*}, \eqref{eq:L3-3-kappa*}, \eqref{eq:L3-4-kappa*}, \eqref{eq:L3-5-kappa*}, $\kappa_{\alpha^*}=\frac{\delta_k}{q_k}\frac{dr}{ds^*}$. Since $\frac{dr}{ds^*}=r'\frac{ds}{ds^*}$,
\[
\kappa_{\alpha^*}=\delta_k\,\frac{r'}{\sqrt{q_k}} .
\]
As $\kappa_{\alpha^*}\neq0$, we get $r'\neq0$, so $\tau_\alpha/\kappa_\alpha=1/r$ is not constant and $\alpha$ is not a helix.

\emph{Step 2.} By \eqref{eq:L3-1-kt}, \eqref{eq:L3-2-kt}, \eqref{eq:L3-3-kt}, \eqref{eq:L3-4-kt}, \eqref{eq:L3-5-kt}, with $\tau_{\alpha^*}=r/\lambda$,
\[
\kappa_\alpha=\theta_k\frac{r^2}{\lambda q_k},\qquad \tau_\alpha=\theta_k\frac{r}{\lambda q_k}.
\]
Hence the quantity $w_k$ inside the power $3/2$ in $\sigma_k$ ($\kappa_\alpha^2-\tau_\alpha^2$ for $k=1,4$; $\tau_\alpha^2-\kappa_\alpha^2$ for $k=2,5$; $\kappa_\alpha^2+\tau_\alpha^2$ for $k=3$) equals $\dfrac{r^2}{\lambda^2q_k}>0$, and $\dfrac{\kappa_\alpha^2}{w_k^{3/2}}=\dfrac{|\lambda|\,|r|}{\sqrt{q_k}}$. Since $(\tau_\alpha/\kappa_\alpha)'=(1/r)'=-r'/r^2$,
\[
\sigma_k=-\frac{|\lambda|\,r'}{|r|\sqrt{q_k}}\neq0 .
\]

\emph{Step 3.} From Steps~1 and~2, $\dfrac{r'}{\sqrt{q_k}}=-\dfrac{|r|\sigma_k}{|\lambda|}$, so
\[
\frac{\tau_{\alpha^*}}{\kappa_{\alpha^*}}=\frac{r/\lambda}{\delta_k\,r'/\sqrt{q_k}}=\frac{-\delta_k\operatorname{sgn}(\lambda)\operatorname{sgn}(r)}{\sigma_k}.
\]
Now $\operatorname{sgn}\lambda=+,+,-,-,-$ and $\delta_k=-,+,+,+,-$ for $k=1,\dots,5$, and $\operatorname{sgn}r=-1$ for $k=1,4$ (there $r<-1$) and $\operatorname{sgn}r=\operatorname{sgn}\tau_\alpha$ otherwise. This gives $\eta_k$ as stated, and \eqref{eq:tau_kappa_sigma}.

\emph{Step 4.} In Types~1, 2, 4, 5, $|r|\neq1$, so $\tau_\alpha^2-\kappa_\alpha^2\neq0$. By Ali--L\'opez \cite{AliLopez} (Theorem~1.2 for timelike curves, Theorem~1.3 for spacelike curves with spacelike or timelike normal), $\alpha$ is a slant helix if and only if $\sigma_k$ is constant. This criterion does not depend on the sign convention for $\tau$, since $\tau\mapsto-\tau$ changes $\sigma_k$ into $-\sigma_k$. If $\alpha$ is a slant helix, $\sigma_k$ is a nonzero constant, so by \eqref{eq:tau_kappa_sigma} $\tau_{\alpha^*}/\kappa_{\alpha^*}$ is constant and $\alpha^*$ is a helix. Conversely, if $\tau_{\alpha^*}/\kappa_{\alpha^*}=c$ is constant, then $c\neq0$ (because $\tau_{\alpha^*}=r/\lambda\neq0$), so $\sigma_k=\eta_k/c$ is constant and $\alpha$ is a slant helix.
\end{proof}

The next theorem unifies the characterizations of Mannheim pairs in $\mathbb{L}^3$.
\begin{theorem}[cf.\ \cite{Liu,Kahraman2011,ElsharkawyElshenhab}]\label{theorem caracterization mannheim pair}
If $(\alpha(s),\alpha^{*}(s^{*}))$ is a Mannheim pair (here a prime on a function of $s^{*}$ denotes $d/ds^{*}$), then \begin{equation}\label{Mannheim equation how hypotesis}
\kappa_{\alpha}(s)\bigl[1+\lambda\varepsilon_3\kappa_{\alpha}(s)\bigr]\varepsilon_{1}
+ \lambda(\tau_{\alpha}(s))^2=0,
\end{equation}

and
\begin{equation}\label{eq mannheim partner}
\varepsilon_{2*}\,\tau_{\alpha^{*}}' = \frac{\kappa_{\alpha^{*}}}{\lambda}\Bigl(1 + \lambda^{2}\,\varepsilon_{1*}\varepsilon_{2*}\,\tau_{\alpha^{*}}^{2}\Bigr).
  \end{equation}

Reciprocally, if $\alpha(s)$ is a curve with $\kappa_{\alpha}>0$ and $\tau_{\alpha}\neq0$ satisfying :
\begin{equation}\label{Mannheim equation how hypotesis reciproca}
\kappa_{\alpha}(s)\bigl[1+\lambda\varepsilon_3\kappa_{\alpha}(s)\bigr]\varepsilon_{1}
+ \lambda(\tau_{\alpha}(s))^2 = 0,
\end{equation}
then $\alpha(s)$ is a Mannheim curve, with its partner curve $\alpha^{*}(s^{*})$ given by
\begin{equation}\label{equacao da reciproca alpha*}
\alpha^{*}(s^{*}) = \alpha(s(s^{*})) + \lambda N_{\alpha}(s(s^{*})),
\end{equation}
where $\varepsilon_{1*}=-\operatorname{sgn}\lambda$ and $s(s^{*})$ satisfies
\begin{equation}\label{ds/ds* of statement Mannheim theorem charac}
\left(\frac{ds}{ds^{*}}\right)^{\!2}
=-\dfrac{\varepsilon_{1*}}{\lambda }\dfrac{\kappa_{\alpha}(s(s^{*}))}{\tau_{\alpha}(s(s^{*}))^{2}},
\end{equation}
and if $\alpha^{*}(s^{*})$ is a curve with $\kappa_{\alpha^{*}}>0$ and $\tau_{\alpha^{*}}\neq0$ satisfying:

\begin{equation}\label{eq:partner_condition}
\varepsilon_{2*}\,\tau_{\alpha^{*}}'(s^{*}) = \frac{\kappa_{\alpha^{*}}(s^{*})}{\lambda}\Bigl(1 + \lambda^{2}\,\varepsilon_{1*}\varepsilon_{2*}\,\tau_{\alpha^{*}}^{2}(s^{*})\Bigr),
  \end{equation}
then $\alpha^{*}(s^{*})$ is a Mannheim partner, with its Mannheim curve $\alpha(s)$ given by
\begin{equation}\label{equacao da reciproca alpha}
\alpha(s) = \alpha^{*}(s^{*}(s)) + \lambda B_{\alpha^{*}}(s^{*}(s)),
\end{equation}
with
\begin{equation}
\kappa_{\alpha}(s)=-\lambda\varepsilon_{1*}\left(\tau_{\alpha^{*}}(s^{*}(s))\right)^2\left(\dfrac{ds^{*}}{ds}(s)\right)^2,
\end{equation}
where $s^{*}(s)$ satisfies
\begin{equation}\label{ds/ds* of statement Mannheim theorem charac 2}
\frac{\dfrac{{d^2s^*}}{ds^2}\left(s\right)}{\left(\dfrac{ds^*}{ds}\left(s\right)\right)^2}=-\lambda\,\varepsilon_{1*}\,\kappa_{\alpha^{*}}(s^{*}(s))\,\tau_{\alpha^{*}}(s^{*}(s)),
\end{equation}
and
\begin{equation}\label{eq de alpha pca}
\varepsilon_{1}=\left(\dfrac{ds^{*}}{ds}(s)\right)^2\left[\varepsilon_{1*}+\lambda^2\tau_{\alpha^{*}}(s^{*}(s))^{2}\varepsilon_{2*}\right].
\end{equation}
\end{theorem}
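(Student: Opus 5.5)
The plan is to argue directly from the Frenet equations \eqref{eq:frenet}, written with $\varepsilon_1,\varepsilon_2,\varepsilon_3$ for $\alpha$ and $\varepsilon_{1*},\varepsilon_{2*},\varepsilon_{3*}$ for $\alpha^*$. This handles all five types at once rather than passing through the type-by-type formulas of Theorem~\ref{lem:relationsL3}.

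\emph{Necessity.} Write $\alpha^*=\alpha+\lambda N_\alpha$ with $\lambda$ a function, where $N_\alpha=-B_{\alpha^*}$ by Remark~\ref{rem:mannheim-sign}. Differentiating in $s$ gives
\[
\frac{d\alpha^*}{ds}=(1+\lambda\varepsilon_3\kappa_\alpha)T_\alpha+\lambda' N_\alpha+\lambda\varepsilon_3\tau_\alpha B_\alpha .
\]
Pairing with $N_\alpha$, which is orthogonal to $T_{\alpha^*}$, yields $\lambda'=0$, so $\lambda$ is constant. Next, $\frac{dB_{\alpha^*}}{ds}$ is parallel to $N_{\alpha^*}$ and therefore lies in $\operatorname{span}\{T_\alpha,B_\alpha\}$. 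Comparing this with $\frac{dN_\alpha}{ds}=\varepsilon_3(\kappa_\alpha T_\alpha+\tau_\alpha B_\alpha)$ shows that $N_{\alpha^*}$ is parallel to $\kappa_\alpha T_\alpha+\tau_\alpha B_\alpha$. Then $T_{\alpha^*}$, being orthogonal to both $N_{\alpha^*}$ and $N_\alpha$, is parallel to $\varepsilon_1\tau_\alpha T_\alpha-\varepsilon_3\kappa_\alpha B_\alpha$; a short check of inner products confirms the signs. Since $\frac{d\alpha^*}{ds}$ is a multiple of this vector, the two coefficients must satisfy
\[
(1+\lambda\varepsilon_3\kappa_\alpha)(-\varepsilon_3\kappa_\alpha)=\lambda\varepsilon_3\tau_\alpha\cdot\varepsilon_1\tau_\alpha .
\]
After multiplying by $-\varepsilon_1\varepsilon_3$ this is \eqref{Mannheim equation how hypotesis}. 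For \eqref{eq mannheim partner} I would run the symmetric computation from $\alpha=\alpha^*+\lambda B_{\alpha^*}$, which gives
\[
\frac{d\alpha}{ds^*}=T_{\alpha^*}-\lambda\varepsilon_{2*}\tau_{\alpha^*}N_{\alpha^*}.
\]
Differentiate once more and impose that the $B_{\alpha^*}$-component vanishes, because $B_{\alpha^*}=-N_\alpha$ is orthogonal to $\frac{dT_\alpha}{ds^*}$. This forces the direction of $T_\alpha$ to stay in $\operatorname{span}\{T_{\alpha^*},N_{\alpha^*}\}$, and the condition becomes a relation between $\tau_{\alpha^*}'$, $\kappa_{\alpha^*}$ and $\tau_{\alpha^*}^2$, which should be exactly \eqref{eq mannheim partner}.

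\emph{First converse.} Assume \eqref{Mannheim equation how hypotesis reciproca} and define $\alpha^*$ by \eqref{equacao da reciproca alpha*}. The derivative computed above is $\lambda\varepsilon_3\tau_\alpha\,(\dots)$ times a fixed vector orthogonal to $N_\alpha$. Its squared norm, simplified using the hypothesis, is
\[
\lambda^2\tau_\alpha^2\cdot\frac{\varepsilon_1\tau_\alpha^2+\varepsilon_3\kappa_\alpha^2\cdots}{\kappa_\alpha^2}.
\]
Reduced via the hypothesis, this gives $\langle \frac{d\alpha^*}{ds},\frac{d\alpha^*}{ds}\rangle=-\lambda\tau_\alpha^2/\kappa_\alpha\cdot(\pm)$. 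From this one reads off both $\varepsilon_{1*}=-\operatorname{sgn}\lambda$ and \eqref{ds/ds* of statement Mannheim theorem charac}. It remains to show that $B_{\alpha^*}=\mp N_\alpha$. I would compute $\frac{dT_{\alpha^*}}{ds}$ by differentiating the unit vector along $\varepsilon_1\tau_\alpha T_\alpha-\varepsilon_3\kappa_\alpha B_\alpha$. The $N_\alpha$-parts cancel, because $\kappa_\alpha\tau_\alpha(1-1)=0$ after using the Frenet signs. Hence $N_{\alpha^*}$ lies in $\operatorname{span}\{T_\alpha,B_\alpha\}$, and $B_{\alpha^*}=T_{\alpha^*}\wedge N_{\alpha^*}$ is parallel to $N_\alpha$; the sign can be fixed by orientation.

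\emph{Second converse.} Assume \eqref{eq:partner_condition} and define $\alpha$ by \eqref{equacao da reciproca alpha}. The derivative $\frac{d\alpha}{ds^*}=T_{\alpha^*}-\lambda\varepsilon_{2*}\tau_{\alpha^*}N_{\alpha^*}$ has squared norm $\varepsilon_{1*}+\lambda^2\varepsilon_{2*}\tau_{\alpha^*}^2$, which gives \eqref{eq de alpha pca} together with the definition of $s$. Differentiating the unit tangent, the hypothesis \eqref{eq:partner_condition} is exactly what kills the $N_{\alpha^*}$-component of $\frac{dT_\alpha}{ds}$ in the appropriate combination, so that $N_\alpha$ is parallel to $B_{\alpha^*}$. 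From the same computation I would extract $\kappa_\alpha$ and the ODE \eqref{ds/ds* of statement Mannheim theorem charac 2} for $s^*(s)$: differentiate $\frac{ds^*}{ds}=|\varepsilon_{1*}+\lambda^2\varepsilon_{2*}\tau_{\alpha^*}^2|^{-1/2}$ and substitute $\tau_{\alpha^*}'$ from the hypothesis.

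\emph{Main obstacle.} I expect the main difficulty to be sign bookkeeping, that is, tracking $\varepsilon_i$, $\varepsilon_{i*}$ and the orientation choice $N_\alpha=-B_{\alpha^*}$ consistently through the cross-product identities of Corollary~\ref{condicao dos vaerpsilons}. To control this, I would do the general $\varepsilon$ computation once and then check the signs against one explicit type, for instance Type~3 via Example~\ref{Example Mannheim pair slanthelixXhelix}.
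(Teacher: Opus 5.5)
The paper gives no proof of this theorem; it is quoted (``cf.'') from Liu--Wang, Kahraman et al.\ and Elsharkawy--Elshenhab, with only Remark~\ref{rem:reciprocal_hyp} added. So your unified $\varepsilon$-computation has to stand on its own, and most of it does:
\begin{itemize}
\item $\lambda'=0$;
\item $N_{\alpha^*}\parallel\kappa_\alpha T_\alpha+\tau_\alpha B_\alpha$ and $T_{\alpha^*}\parallel\varepsilon_1\tau_\alpha T_\alpha-\varepsilon_3\kappa_\alpha B_\alpha$, and the resulting proof of \eqref{Mannheim equation how hypotesis};
\item the cancellation of the $N_\alpha$-part in the first converse;
\item the plan for the second converse, including \eqref{ds/ds* of statement Mannheim theorem charac 2} obtained by differentiating $ds^*/ds=|\varepsilon_{1*}+\lambda^2\varepsilon_{2*}\tau_{\alpha^*}^2|^{-1/2}$.
\end{itemize}

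The step that fails as written is the necessity of \eqref{eq mannheim partner}. You impose that the $B_{\alpha^*}$-component vanishes ``because $B_{\alpha^*}=-N_\alpha$ is orthogonal to $dT_\alpha/ds^*$''. The opposite holds: $dT_\alpha/ds^*=\kappa_\alpha\frac{ds}{ds^*}N_\alpha=-\kappa_\alpha\frac{ds}{ds^*}B_{\alpha^*}$. The $B_{\alpha^*}$-component of $d^2\alpha/ds^{*2}$ is $\lambda\varepsilon_{1*}\tau_{\alpha^*}^2$. It contains neither $\tau_{\alpha^*}'$ nor $\kappa_{\alpha^*}$, and setting it to zero would force $\tau_{\alpha^*}=0$, not \eqref{eq mannheim partner}. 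Likewise, $T_\alpha\in\operatorname{span}\{T_{\alpha^*},N_{\alpha^*}\}$ is automatic from your formula for $d\alpha/ds^*$, so nothing needs to be forced there.

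What must vanish is the part of $dT_\alpha/ds^*$ lying in $\operatorname{span}\{T_{\alpha^*},N_{\alpha^*}\}$. With $c=ds^*/ds$ and $\mu=-\lambda\varepsilon_{2*}\tau_{\alpha^*}$, so that $T_\alpha=c\,(T_{\alpha^*}+\mu N_{\alpha^*})$, and primes denoting $d/ds^*$,
\[
\frac{dT_\alpha}{ds^*}=(c'+c\mu\varepsilon_{3*}\kappa_{\alpha^*})\,T_{\alpha^*}+(c'\mu+c\kappa_{\alpha^*}+c\mu')\,N_{\alpha^*}+c\mu\varepsilon_{3*}\tau_{\alpha^*}\,B_{\alpha^*}.
\]
Setting the first two coefficients to zero gives:
\begin{itemize}
\item $c'/c=-\lambda\varepsilon_{1*}\kappa_{\alpha^*}\tau_{\alpha^*}$, which is \eqref{ds/ds* of statement Mannheim theorem charac 2};
\item $\mu'=\kappa_{\alpha^*}(\varepsilon_{3*}\mu^2-1)$, which by $\varepsilon_{3*}=-\varepsilon_{1*}\varepsilon_{2*}$ is exactly \eqref{eq mannheim partner}.
\end{itemize}
Matching the $B_{\alpha^*}$-coefficient with $-c^{-1}\kappa_\alpha$ gives $\kappa_\alpha=-\lambda\varepsilon_{1*}c^2\tau_{\alpha^*}^2$. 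This is the same computation you describe correctly for the second converse, so one computation serves both directions.

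Three smaller points.
\begin{itemize}
\item In the first converse the squared norm is exactly $-\lambda\tau_\alpha^2/\kappa_\alpha$, since the hypothesis gives $\varepsilon_1\tau_\alpha^2+\varepsilon_3\kappa_\alpha^2=-\kappa_\alpha/\lambda$. Drop the ``$(\pm)$'': this exact sign is what yields $\varepsilon_{1*}=-\operatorname{sgn}\lambda$.
\item Concluding $N_{\alpha^*}\in\operatorname{span}\{T_\alpha,B_\alpha\}$ needs $dT_{\alpha^*}/ds\neq0$. This fails when $\kappa_\alpha,\tau_\alpha$ are constant, since $T_{\alpha^*}$ is then constant and $\alpha^*$ is a line. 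So $\kappa_{\alpha^*}\neq0$ must be assumed, as in Theorem~\ref{lem:relationsL3}.
\item In the second converse, with $\lambda$ prescribed, the displayed formula for $\kappa_\alpha$ under $N_\alpha=-B_{\alpha^*}$ presupposes $\lambda\varepsilon_{1*}<0$. Regularity of $\alpha$ needs $\varepsilon_{1*}+\lambda^2\varepsilon_{2*}\tau_{\alpha^*}^2\neq0$. These are exactly the assumptions recorded in Remark~\ref{rem:reciprocal_hyp}.
\end{itemize}
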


\begin{remark}\label{rem:reciprocal_hyp}
In the reciprocal statements the hypothesis $\tau\neq0$ is needed: a planar curve of constant curvature (a circle) satisfies \eqref{Mannheim equation how hypotesis reciproca} with $\tau=0$, but then $\alpha+\lambda N_\alpha$ is a constant. Moreover, in the last statement we assume $\lambda\varepsilon_{1*}<0$ and $\varepsilon_{1*}+\lambda^2\varepsilon_{2*}\tau_{\alpha^{*}}^2\neq0$, which are the conditions for $\alpha$ in \eqref{equacao da reciproca alpha} to be a regular non-null curve with $\kappa_\alpha>0$ (see the formula for $\kappa_\alpha$ and \eqref{eq de alpha pca}).
\end{remark}

\section{Mannheim pair equation with respect to a generic parameter}\label{sec:mannheim_x}

We can state below the characterization of a Mannheim pair with respect to any other general parameter $t, x$ of the curves.
Before that, we establish a result about the reparametrization of a function.

\begin{proposition}\label{proposicao reparametrizacao nula} A function $f:I\longrightarrow J$, $f(s)$, is identically zero if and only if the function $g=f\circ h:L\subset \mathbb{R}\longrightarrow J$ is identically zero, where $h:L\subset \mathbb{R}\longrightarrow I$ is a diffeomorphism.
\end{proposition}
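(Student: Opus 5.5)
The proof is a direct pointwise argument in both directions, using only that $h$ is a bijection from $L$ onto $I$. Every diffeomorphism has this property, so no differentiability of $f$ or of $g$ is required. The plan is to prove each direction separately, by evaluating the relevant function at an arbitrary point.

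\emph{($\Rightarrow$)} Suppose $f\equiv0$ on $I$. For every $x\in L$, the point $h(x)$ lies in $I$. Hence $g(x)=f(h(x))=0$, and so $g\equiv0$ on $L$.

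\emph{($\Leftarrow$)} Suppose $g\equiv0$ on $L$, and fix an arbitrary $s\in I$. Since $h$ is a diffeomorphism onto $I$, it is in particular surjective. So there is a point, namely $x=h^{-1}(s)\in L$, with $h(x)=s$. Then $f(s)=f(h(h^{-1}(s)))=g(h^{-1}(s))=0$. As $s$ was arbitrary, $f\equiv0$. Equivalently, one can write $f=g\circ h^{-1}$ and apply the first direction to the diffeomorphism $h^{-1}$.

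There is no real obstacle here. The only point worth stating explicitly is where surjectivity of $h$ onto $I$ enters: it is needed in the reverse direction. Without it, $g$ would only control the values of $f$ on $h(L)$, a possibly proper subset of $I$. Later in the paper this proposition will be applied with $h=s(t)$ or $h=s^*(x)$, which are diffeomorphisms because $D=\frac{ds}{dt}>0$. It will justify transferring identities such as the Mannheim conditions \eqref{Mannheim equation how hypotesis reciproca} and \eqref{eq:partner_condition} from the arc-length parameter to a generic parameter, and back.
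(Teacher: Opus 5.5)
Your proof is correct and follows the same route as the paper: both directions are pointwise evaluations, and the reverse direction uses the inverse $h^{-1}$ (the paper's $t(s)$) to write $f(s)=g(h^{-1}(s))=0$. Your remark that only the bijectivity of $h$ is used, and not its differentiability, is accurate and is implicit in the paper's argument as well.
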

\begin{proof}
Indeed, let us write $g(t)=(f\circ s)(t),\ s=s(t)$; then

($\Rightarrow$)  If $f(s)=0$, $\forall s\in I$, then  $g(t)=f(s(t))=0$, $\forall t\in L$, hence $g(t)=0$, $\forall t\in L$.

($\Leftarrow$)  If $g(t)=0$, $\forall t\in L$, then  $f(s)=g(t(s))=0$, with $t(s)$ inverse of $s(t)$ ,$\forall s\in I$, hence $f(s)=0$, $\forall s\in I$.
\end{proof}

Hence, let $\beta(t) = \alpha(s(t))$ be a reparametrization of $\alpha$, and $\beta^{*}(t) = \alpha^{*}(s^{*}(s(t)))$. Or consider a reparametrization $\beta^{*}(x) = \alpha^{*}(s^{*}(x))$ of $\alpha^{*}$, and $\beta(x) = \alpha(s(s^{*}(x)))$. Here $\varepsilon_1,\varepsilon_3$ and $\varepsilon_{1*},\varepsilon_{2*}$ are the signs of Theorem~\ref{theorem caracterization mannheim pair}.

\begin{proposition}\label{prop mannheim x geral}
\begin{enumerate}
    \item
    $\alpha(s)$, with $\kappa_\alpha>0$ and $\tau_\alpha\neq0$, is a Mannheim curve if and only if, for some constant $\lambda\neq0$ and any reparametrization $s=s(t)$,
    \begin{equation}\label{Mannheim x geral}
    \kappa_{\beta}(t)=-\lambda \bigl[ \varepsilon_3\,\kappa_{\beta}(t)^2+\varepsilon_1\,\tau_{\beta}(t)^2\bigr].
    \end{equation}
    \item
    Let $\alpha^{*}$ have $\kappa_{\alpha^{*}}>0$ and $\tau_{\alpha^{*}}\neq0$, and assume $\lambda\varepsilon_{1*}<0$ and $\varepsilon_{1*}+\lambda^2\varepsilon_{2*}\tau_{\alpha^{*}}^2\neq0$ (Remark~\ref{rem:reciprocal_hyp}). Then $\alpha^{*}(s^{*})$ is a Mannheim partner if and only if

\begin{equation}\label{eq 2 mannheim pair em x}
 \varepsilon_{2*}\,\dfrac{d}{dx}\bigl(\tau_{\beta^{*}}(x)\bigr)=\dfrac{\kappa_{\beta^{*}}(x)}{\lambda}\Bigl(1+\lambda^2\varepsilon_{1*}\varepsilon_{2*}\bigl[\tau_{\beta^{*}}(x)\bigr]^2\Bigr)\dfrac{ds^{*}}{dx},
\end{equation}
for any reparametrization $s^{*}=s^{*}(x)$.
\end{enumerate}
\end{proposition}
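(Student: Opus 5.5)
The plan is to reduce both items to Theorem~\ref{theorem caracterization mannheim pair}, using the invariance of the Frenet apparatus under orientation-preserving reparametrization together with Proposition~\ref{proposicao reparametrizacao nula}.

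\emph{Item (1).} First I rewrite \eqref{Mannheim equation how hypotesis} as $\varepsilon_1\kappa_\alpha+\lambda\varepsilon_1\varepsilon_3\kappa_\alpha^2+\lambda\tau_\alpha^2=0$. Multiplying by $\varepsilon_1$ and using $\varepsilon_1^2=1$ gives
\[
\kappa_\alpha(s)=-\lambda\bigl[\varepsilon_3\kappa_\alpha(s)^2+\varepsilon_1\tau_\alpha(s)^2\bigr].
\]
Let $f(s)=\kappa_\alpha(s)+\lambda[\varepsilon_3\kappa_\alpha^2+\varepsilon_1\tau_\alpha^2](s)$. By \eqref{kappa at t} and \eqref{torsion at t}, $\kappa_\beta(t)=\kappa_\alpha(s(t))$ and $\tau_\beta(t)=\tau_\alpha(s(t))$, so $f\circ s$ is exactly the difference of the two sides of \eqref{Mannheim x geral}. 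The causal characters $\varepsilon_i$ are also unchanged, because $T_\beta=T_\alpha\circ s$ and likewise for $N$ and $B$. By Proposition~\ref{proposicao reparametrizacao nula}, $f\equiv0$ if and only if $f\circ s\equiv0$. Now combine the direct part and the reciprocal of Theorem~\ref{theorem caracterization mannheim pair}; the reciprocal applies because $\kappa_\alpha>0$ and $\tau_\alpha\neq0$ are assumed. This gives the equivalence, with the same $\lambda$ for every reparametrization.

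\emph{Item (2).} The same scheme applies, but now a derivative appears, so a chain-rule factor is needed. With $\kappa_{\beta^*}(x)=\kappa_{\alpha^*}(s^*(x))$ and $\tau_{\beta^*}(x)=\tau_{\alpha^*}(s^*(x))$, we have
\[
\frac{d}{dx}\tau_{\beta^*}(x)=\tau_{\alpha^*}'(s^*(x))\,\frac{ds^*}{dx}.
\]
Set $g(s^*)=\varepsilon_{2*}\tau_{\alpha^*}'-\frac{\kappa_{\alpha^*}}{\lambda}(1+\lambda^2\varepsilon_{1*}\varepsilon_{2*}\tau_{\alpha^*}^2)$. Then the difference of the two sides of \eqref{eq 2 mannheim pair em x} equals $(g\circ s^*)(x)\,\frac{ds^*}{dx}$. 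Since $\frac{ds^*}{dx}>0$, this vanishes identically if and only if $g\circ s^*\equiv0$, and by Proposition~\ref{proposicao reparametrizacao nula} this holds if and only if $g\equiv0$, i.e.\ \eqref{eq:partner_condition}.

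For the forward direction, a Mannheim partner satisfies \eqref{eq mannheim partner} by the direct part of Theorem~\ref{theorem caracterization mannheim pair}. For the converse, the last statement of that theorem applies under the assumptions $\kappa_{\alpha^*}>0$, $\tau_{\alpha^*}\neq0$, $\lambda\varepsilon_{1*}<0$ and $\varepsilon_{1*}+\lambda^2\varepsilon_{2*}\tau_{\alpha^*}^2\neq0$. These are precisely the conditions in Remark~\ref{rem:reciprocal_hyp} that make $\alpha=\alpha^*+\lambda B_{\alpha^*}$ a regular non-null curve with $\kappa_\alpha>0$.

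\emph{Main obstacle.} The delicate point is the reciprocal in (2). Existence of the reparametrization $s^*(s)$ solving \eqref{ds/ds* of statement Mannheim theorem charac 2} is guaranteed by Theorem~\ref{theorem caracterization mannheim pair}. What must be checked is that the signs $\varepsilon_{1*},\varepsilon_{2*}$ used in \eqref{eq 2 mannheim pair em x} are those of $\beta^*$, which is again invariance of causal character under reparametrization. One must also make sure the hypotheses of Remark~\ref{rem:reciprocal_hyp} are stated in terms that are invariant under reparametrization; this holds because they involve only $\tau_{\alpha^*}(s^*(x))=\tau_{\beta^*}(x)$.
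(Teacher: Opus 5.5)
Your proposal is correct and follows essentially the same route as the paper: it rewrites the characterizations of Theorem~\ref{theorem caracterization mannheim pair} as $f\equiv0$, transfers them through Proposition~\ref{proposicao reparametrizacao nula}, and in item~(2) inserts the chain-rule factor $\frac{ds^{*}}{dx}$. You also state explicitly that $\frac{ds^{*}}{dx}\neq0$ lets you divide this factor back out, and that the direct and reciprocal parts of the theorem are both needed; the paper leaves both points implicit.
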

\begin{proof}
\begin{enumerate}
    \item Multiplying equation \eqref{Mannheim equation how hypotesis} by $\varepsilon_1$, $\alpha$ is a Mannheim curve if and only if $f(s)=0$, where $f(s)=\kappa_{\alpha}(s)+\lambda \bigl( \varepsilon_3\,\kappa_{\alpha}(s)^2+\varepsilon_1\,\tau_{\alpha}(s)^2\bigr).$ Then by proposition \ref{proposicao reparametrizacao nula}, $f(s)=0$ if and only if $f(s(t))=0$ for any reparametrization $s=s(t)$, i.e.,

$$\kappa_{\alpha}(s(t))=-\lambda \bigl[ \varepsilon_3\,\kappa_{\alpha}(s(t))^2+\varepsilon_1\,\tau_{\alpha}(s(t))^2\bigr].$$

\item By equation \eqref{eq mannheim partner}, $\alpha^{*}$ is a Mannheim partner if and only if $f(s^{*})=0$, where $f(s^{*})=\varepsilon_{2*}\,\dfrac{d \tau_{\alpha^{*}}}{ds^{*}}(s^{*})-\dfrac{\kappa_{\alpha^{*}}(s^{*})}{\lambda}\Bigl(1+\lambda^2\varepsilon_{1*}\varepsilon_{2*}\,\tau_{\alpha^{*}}(s^{*})^2\Bigr).$ Then by proposition \ref{proposicao reparametrizacao nula}, $f(s^{*})=0$ if and only if $f(s^{*}(x))=0$ for any reparametrization $s^{*}=s^{*}(x)$, i.e. ,
    \begin{equation}
    \varepsilon_{2*}\,\dfrac{d \tau_{\alpha^{*}}}{ds^{*}}(s^{*}(x))=\dfrac{\kappa_{\alpha^{*}}(s^{*}(x))}{\lambda}\Bigl(1+\lambda^2\varepsilon_{1*}\varepsilon_{2*}\,\tau_{\alpha^{*}}(s^{*}(x))^2\Bigr). \end{equation}
    Multiplying both sides by \(\dfrac{ds^{*}}{dx}\) and using the chain rule, $\alpha^*(s^{*})$ is a Mannheim partner iff for any reparametrization $s^{*}=s^{*}(x)$,
\begin{equation}
 \varepsilon_{2*}\,\dfrac{d}{dx}\bigl(\tau_{\alpha^{*}}(s^{*}(x))\bigr)=\dfrac{\kappa_{\alpha^{*}}(s^{*}(x))}{\lambda}\Bigl(1+\lambda^2\varepsilon_{1*}\varepsilon_{2*}\bigl[\tau_{\alpha^{*}}(s^{*}(x))\bigr]^2\Bigr)\dfrac{ds^{*}}{dx}.
\end{equation}

\end{enumerate}
\end{proof}

From one curve of the Mannheim pair, we obtain the other curve of the pair and its respective Frenet apparatus, taking as reference the parameter and the Frenet apparatus of the other curve. This is formalized in the following proposition.

\begin{proposition}\label{proposition of pair generic parameter L3}
Let $(\alpha(s),\alpha^{*}(s^{*}))$ be a Mannheim pair of Type~$k$ in $\mathbb{L}^3$, with the conventions of Theorem~\ref{lem:relationsL3}, and suppose $\dfrac{ds^{*}}{ds}>0$. Then $\tau_\alpha\neq0$ and
\begin{enumerate}
    \item If $t$ is a generic parameter for the curve $\alpha$, with $\dfrac{ds}{dt}>0$, $\beta(t)=\alpha(s(t))$ and $\beta^{*}(t)=\alpha^{*}(s^{*}(s(t)))$, then
    \begin{enumerate}
    \item
\begin{equation}\label{alpha* to s*os(x) L3}
\beta^*(t)= \beta(t) + \lambda N_\beta(t).
\end{equation}
    \item
    \begin{equation}\label{B*=-N to s(x) L3}
     B_{\beta^{*}}(t)=-N_{\beta}(t).
    \end{equation}
    \item $T_{\beta^{*}}$, $N_{\beta^{*}}$, $\tau_{\beta^{*}}$ and $\kappa_{\beta^{*}}$ are given, according to the type, by:
    \begin{enumerate}[label=\textbf{Type \arabic*.}, leftmargin=*]
    \item ($\lambda>0$)
\begin{equation}\label{eq:gp1-1-a}
\begin{aligned}
T_{\beta^{*}}(t)&=-\dfrac{1}{\sqrt{\left(\dfrac{\kappa_{\beta}(t)}{\tau_{\beta}(t)}\right)^2-1}}\left(T_\beta(t)+\dfrac{\kappa_{\beta}(t)}{\tau_{\beta}(t)}B_\beta(t)\right),\\
N_{\beta^{*}}(t)&=-\dfrac{1}{\sqrt{\left(\dfrac{\kappa_{\beta}(t)}{\tau_{\beta}(t)}\right)^2-1}}\left(\dfrac{\kappa_{\beta}(t)}{\tau_{\beta}(t)}T_\beta(t)+B_\beta(t)\right),
\end{aligned}
\end{equation}
\begin{equation}\label{eq:gp1-1-b}
\begin{aligned}
\tau_{\beta^{*}}(t)&=\dfrac{1}{\lambda}\dfrac{\kappa_\beta(t)}{\tau_\beta(t)}, &
\kappa_{\beta^{*}}(t)&=-\dfrac{\dfrac{d}{dt}\left(\dfrac{\kappa_{\beta}(t)}{\tau_{\beta}(t)}\right)}{\dfrac{ds}{dt}\sqrt{\left(\dfrac{\kappa_{\beta}(t)}{\tau_{\beta}(t)}\right)^2-1}}.
\end{aligned}
\end{equation}
    \item ($\lambda>0$)
\begin{equation}\label{eq:gp1-2-a}
\begin{aligned}
T_{\beta^{*}}(t)&=\dfrac{1}{\sqrt{1-\left(\dfrac{\kappa_{\beta}(t)}{\tau_{\beta}(t)}\right)^2}}\left(T_\beta(t)+\dfrac{\kappa_{\beta}(t)}{\tau_{\beta}(t)}B_\beta(t)\right),\\
N_{\beta^{*}}(t)&=\dfrac{1}{\sqrt{1-\left(\dfrac{\kappa_{\beta}(t)}{\tau_{\beta}(t)}\right)^2}}\left(\dfrac{\kappa_{\beta}(t)}{\tau_{\beta}(t)}T_\beta(t)+B_\beta(t)\right),
\end{aligned}
\end{equation}
\begin{equation}\label{eq:gp1-2-b}
\begin{aligned}
\tau_{\beta^{*}}(t)&=\dfrac{1}{\lambda}\dfrac{\kappa_\beta(t)}{\tau_\beta(t)}, &
\kappa_{\beta^{*}}(t)&=\dfrac{\dfrac{d}{dt}\left(\dfrac{\kappa_{\beta}(t)}{\tau_{\beta}(t)}\right)}{\dfrac{ds}{dt}\sqrt{1-\left(\dfrac{\kappa_{\beta}(t)}{\tau_{\beta}(t)}\right)^2}}.
\end{aligned}
\end{equation}
    \item ($\lambda<0$)
\begin{equation}\label{eq:gp1-3-a}
\begin{aligned}
T_{\beta^{*}}(t)&=\dfrac{1}{\sqrt{1+\left(\dfrac{\kappa_{\beta}(t)}{\tau_{\beta}(t)}\right)^2}}\left(T_\beta(t)-\dfrac{\kappa_{\beta}(t)}{\tau_{\beta}(t)}B_\beta(t)\right),\\
N_{\beta^{*}}(t)&=-\dfrac{1}{\sqrt{1+\left(\dfrac{\kappa_{\beta}(t)}{\tau_{\beta}(t)}\right)^2}}\left(\dfrac{\kappa_{\beta}(t)}{\tau_{\beta}(t)}T_\beta(t)+B_\beta(t)\right),
\end{aligned}
\end{equation}
\begin{equation}\label{eq:gp1-3-b}
\begin{aligned}
\tau_{\beta^{*}}(t)&=\dfrac{1}{\lambda}\dfrac{\kappa_\beta(t)}{\tau_\beta(t)}, &
\kappa_{\beta^{*}}(t)&=\dfrac{\dfrac{d}{dt}\left(\dfrac{\kappa_{\beta}(t)}{\tau_{\beta}(t)}\right)}{\dfrac{ds}{dt}\sqrt{1+\left(\dfrac{\kappa_{\beta}(t)}{\tau_{\beta}(t)}\right)^2}}.
\end{aligned}
\end{equation}
    \item ($\lambda<0$)
\begin{equation}\label{eq:gp1-4-a}
\begin{aligned}
T_{\beta^{*}}(t)&=-\dfrac{1}{\sqrt{\left(\dfrac{\kappa_{\beta}(t)}{\tau_{\beta}(t)}\right)^2-1}}\left(T_\beta(t)+\dfrac{\kappa_{\beta}(t)}{\tau_{\beta}(t)}B_\beta(t)\right),\\
N_{\beta^{*}}(t)&=\dfrac{1}{\sqrt{\left(\dfrac{\kappa_{\beta}(t)}{\tau_{\beta}(t)}\right)^2-1}}\left(\dfrac{\kappa_{\beta}(t)}{\tau_{\beta}(t)}T_\beta(t)+B_\beta(t)\right),
\end{aligned}
\end{equation}
\begin{equation}\label{eq:gp1-4-b}
\begin{aligned}
\tau_{\beta^{*}}(t)&=\dfrac{1}{\lambda}\dfrac{\kappa_\beta(t)}{\tau_\beta(t)}, &
\kappa_{\beta^{*}}(t)&=\dfrac{\dfrac{d}{dt}\left(\dfrac{\kappa_{\beta}(t)}{\tau_{\beta}(t)}\right)}{\dfrac{ds}{dt}\sqrt{\left(\dfrac{\kappa_{\beta}(t)}{\tau_{\beta}(t)}\right)^2-1}}.
\end{aligned}
\end{equation}
    \item ($\lambda<0$)
\begin{equation}\label{eq:gp1-5-a}
\begin{aligned}
T_{\beta^{*}}(t)&=\dfrac{1}{\sqrt{1-\left(\dfrac{\kappa_{\beta}(t)}{\tau_{\beta}(t)}\right)^2}}\left(T_\beta(t)+\dfrac{\kappa_{\beta}(t)}{\tau_{\beta}(t)}B_\beta(t)\right),\\
N_{\beta^{*}}(t)&=-\dfrac{1}{\sqrt{1-\left(\dfrac{\kappa_{\beta}(t)}{\tau_{\beta}(t)}\right)^2}}\left(\dfrac{\kappa_{\beta}(t)}{\tau_{\beta}(t)}T_\beta(t)+B_\beta(t)\right),
\end{aligned}
\end{equation}
\begin{equation}\label{eq:gp1-5-b}
\begin{aligned}
\tau_{\beta^{*}}(t)&=\dfrac{1}{\lambda}\dfrac{\kappa_\beta(t)}{\tau_\beta(t)}, &
\kappa_{\beta^{*}}(t)&=-\dfrac{\dfrac{d}{dt}\left(\dfrac{\kappa_{\beta}(t)}{\tau_{\beta}(t)}\right)}{\dfrac{ds}{dt}\sqrt{1-\left(\dfrac{\kappa_{\beta}(t)}{\tau_{\beta}(t)}\right)^2}}.
\end{aligned}
\end{equation}
    \end{enumerate}
\end{enumerate}

 \item If $x$ is a generic parameter for the curve $\alpha^{*}$, with $\dfrac{ds^{*}}{dx}>0$, $\beta^{*}(x)=\alpha^{*}(s^{*}(x))$ and $\beta(x)=\alpha(s(s^{*}(x)))$, then
 \begin{enumerate}
 \item \begin{equation}\label{eq:alpha_x 2 L3}
\beta(x)=\beta^{*}(x)+\lambda B_{\beta^{*}}(x).
\end{equation}
     \item  \begin{equation}\label{N=-B* to s*(x) L3}
     N_{\beta}(x)=-B_{\beta^{*}}(x).
    \end{equation}
     \item $T_{\beta}$, $B_{\beta}$, $\tau_{\beta}$ and $\kappa_{\beta}$ are given, according to the type, by:
    \begin{enumerate}[label=\textbf{Type \arabic*.}, leftmargin=*]
    \item ($\lambda>0$)
\begin{equation}\label{eq:gp2-1-a}
\begin{aligned}
T_{\beta}(x)&=\dfrac{1}{\sqrt{\lambda^2\tau_{\beta^{*}}(x)^2-1}}\Big(T_{\beta^{*}}(x)-\lambda\tau_{\beta^{*}}(x)N_{\beta^{*}}(x)\Big),\\
B_{\beta}(x)&=\dfrac{1}{\sqrt{\lambda^2\tau_{\beta^{*}}(x)^2-1}}\Big(-\lambda\tau_{\beta^{*}}(x)T_{\beta^{*}}(x)+N_{\beta^{*}}(x)\Big),
\end{aligned}
\end{equation}
\begin{equation}\label{eq:gp2-1-b}
\begin{aligned}
\tau_{\beta}(x)&=\dfrac{\tau_{\beta^{*}}(x)}{\lambda^2\tau_{\beta^{*}}(x)^2-1}, &
\kappa_{\beta}(x)&=\dfrac{\lambda\tau_{\beta^{*}}(x)^2}{\lambda^2\tau_{\beta^{*}}(x)^2-1}.
\end{aligned}
\end{equation}
    \item ($\lambda>0$)
\begin{equation}\label{eq:gp2-2-a}
\begin{aligned}
T_{\beta}(x)&=\dfrac{1}{\sqrt{1-\lambda^2\tau_{\beta^{*}}(x)^2}}\Big(T_{\beta^{*}}(x)-\lambda\tau_{\beta^{*}}(x)N_{\beta^{*}}(x)\Big),\\
B_{\beta}(x)&=\dfrac{1}{\sqrt{1-\lambda^2\tau_{\beta^{*}}(x)^2}}\Big(-\lambda\tau_{\beta^{*}}(x)T_{\beta^{*}}(x)+N_{\beta^{*}}(x)\Big),
\end{aligned}
\end{equation}
\begin{equation}\label{eq:gp2-2-b}
\begin{aligned}
\tau_{\beta}(x)&=\dfrac{\tau_{\beta^{*}}(x)}{1-\lambda^2\tau_{\beta^{*}}(x)^2}, &
\kappa_{\beta}(x)&=\dfrac{\lambda\tau_{\beta^{*}}(x)^2}{1-\lambda^2\tau_{\beta^{*}}(x)^2}.
\end{aligned}
\end{equation}
    \item ($\lambda<0$)
\begin{equation}\label{eq:gp2-3-a}
\begin{aligned}
T_{\beta}(x)&=\dfrac{1}{\sqrt{1+\lambda^2\tau_{\beta^{*}}(x)^2}}\Big(T_{\beta^{*}}(x)-\lambda\tau_{\beta^{*}}(x)N_{\beta^{*}}(x)\Big),\\
B_{\beta}(x)&=-\dfrac{1}{\sqrt{1+\lambda^2\tau_{\beta^{*}}(x)^2}}\Big(\lambda\tau_{\beta^{*}}(x)T_{\beta^{*}}(x)+N_{\beta^{*}}(x)\Big),
\end{aligned}
\end{equation}
\begin{equation}\label{eq:gp2-3-b}
\begin{aligned}
\tau_{\beta}(x)&=-\dfrac{\tau_{\beta^{*}}(x)}{1+\lambda^2\tau_{\beta^{*}}(x)^2}, &
\kappa_{\beta}(x)&=-\dfrac{\lambda\tau_{\beta^{*}}(x)^2}{1+\lambda^2\tau_{\beta^{*}}(x)^2}.
\end{aligned}
\end{equation}
    \item ($\lambda<0$)
\begin{equation}\label{eq:gp2-4-a}
\begin{aligned}
T_{\beta}(x)&=\dfrac{1}{\sqrt{\lambda^2\tau_{\beta^{*}}(x)^2-1}}\Big(T_{\beta^{*}}(x)+\lambda\tau_{\beta^{*}}(x)N_{\beta^{*}}(x)\Big),\\
B_{\beta}(x)&=-\dfrac{1}{\sqrt{\lambda^2\tau_{\beta^{*}}(x)^2-1}}\Big(\lambda\tau_{\beta^{*}}(x)T_{\beta^{*}}(x)+N_{\beta^{*}}(x)\Big),
\end{aligned}
\end{equation}
\begin{equation}\label{eq:gp2-4-b}
\begin{aligned}
\tau_{\beta}(x)&=-\dfrac{\tau_{\beta^{*}}(x)}{\lambda^2\tau_{\beta^{*}}(x)^2-1}, &
\kappa_{\beta}(x)&=-\dfrac{\lambda\tau_{\beta^{*}}(x)^2}{\lambda^2\tau_{\beta^{*}}(x)^2-1}.
\end{aligned}
\end{equation}
    \item ($\lambda<0$)
\begin{equation}\label{eq:gp2-5-a}
\begin{aligned}
T_{\beta}(x)&=\dfrac{1}{\sqrt{1-\lambda^2\tau_{\beta^{*}}(x)^2}}\Big(T_{\beta^{*}}(x)+\lambda\tau_{\beta^{*}}(x)N_{\beta^{*}}(x)\Big),\\
B_{\beta}(x)&=-\dfrac{1}{\sqrt{1-\lambda^2\tau_{\beta^{*}}(x)^2}}\Big(\lambda\tau_{\beta^{*}}(x)T_{\beta^{*}}(x)+N_{\beta^{*}}(x)\Big),
\end{aligned}
\end{equation}
\begin{equation}\label{eq:gp2-5-b}
\begin{aligned}
\tau_{\beta}(x)&=-\dfrac{\tau_{\beta^{*}}(x)}{1-\lambda^2\tau_{\beta^{*}}(x)^2}, &
\kappa_{\beta}(x)&=-\dfrac{\lambda\tau_{\beta^{*}}(x)^2}{1-\lambda^2\tau_{\beta^{*}}(x)^2}.
\end{aligned}
\end{equation}
    \end{enumerate}
 \end{enumerate}
\end{enumerate}
\end{proposition}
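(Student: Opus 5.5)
The plan is to reduce everything to the arc-length statements already collected in Theorem~\ref{lem:relationsL3} and Corollary~\ref{corolario par s e par s*}, and then transport them to the generic parameters. The transport uses the reparametrization invariance of the Frenet apparatus, namely \eqref{tangente at t}--\eqref{torsion at t}. The fact $\tau_\alpha\neq0$ is already part of Corollary~\ref{corolario par s e par s*}.

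\textbf{Part 1.} Since $\beta(t)=\alpha(s(t))$ with $ds/dt>0$, we have $N_\beta(t)=N_\alpha(s(t))$, and \eqref{eq:alpha_star} evaluated at $s=s(t)$ gives \eqref{alpha* to s*os(x) L3} at once. The map $t\mapsto s^*(s(t))$ is orientation preserving because $\frac{ds^*}{ds}>0$. Hence $\beta^*$ is an orientation-preserving reparametrization of $\alpha^*$, so its frame, $\kappa$ and $\tau$ coincide with those of $\alpha^*$ at $s^*(s(t))$. Then \eqref{B*=-N to s(x) L3} follows from \eqref{fundamental hypotesis Mannheim Pair}.

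For item (c), take $T_{\alpha^*}$ and $N_{\alpha^*}$ from \eqref{eq:T_star_1}--\eqref{eq:N_star_5}. Substitute $\cosh\bm A_k,\sinh\bm A_k$ (or $\cos\bm A_3,\sin\bm A_3$) from \eqref{eq:L3-1-A}, \eqref{eq:L3-2-A}, \eqref{eq:L3-3-A}, \eqref{eq:L3-4-A}, \eqref{eq:L3-5-A}, with $r=\kappa_\alpha/\tau_\alpha$. For Type~1, for instance, $\sinh\bm A_1 T_\alpha+\cosh\bm A_1 B_\alpha=-(T_\alpha+rB_\alpha)/\sqrt{r^2-1}$, which is \eqref{eq:gp1-1-a}. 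One sign check is needed per type; in Type~3 it produces the minus in $T_\alpha-rB_\alpha$. The torsion comes from \eqref{tau* e tau e kappa L3}. For the curvature, use \eqref{eq:L3-1-kappa*}--\eqref{eq:L3-5-kappa*} and convert derivatives by the chain rule:
\[
\frac{dr}{ds^*}=\frac{dr}{dt}\,\frac{dt}{ds}\,\frac{ds}{ds^*}.
\]
Here $\frac{ds}{ds^*}=\sqrt{q_k}$, so
\[
\kappa_{\beta^*}=\pm\frac{1}{q_k}\,\frac{dr}{dt}\,\frac{\sqrt{q_k}}{ds/dt}=\pm\frac{dr/dt}{(ds/dt)\sqrt{q_k}},
\]
which gives \eqref{eq:gp1-1-b}--\eqref{eq:gp1-5-b} with the signs inherited from the corollary.

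\textbf{Part 2.} This part is symmetric. Here $\beta(x)=\alpha(s(s^*(x)))$ with $\frac{ds}{ds^*}>0$, so \eqref{eq:alpha} and \eqref{fundamental hypotesis Mannheim Pair} give items (a) and (b). For item (c), invert the relations of Theorem~\ref{lem:relationsL3} using \eqref{eq:T_alpha_1}--\eqref{eq:B_alpha_5}. Substitute $\cosh\bm a_k,\sinh\bm a_k$ from \eqref{eq:L3-1-a}, \eqref{eq:L3-2-a}, \eqref{eq:L3-3-a}, \eqref{eq:L3-4-a}, \eqref{eq:L3-5-a}, expressed through $\lambda\tau_{\alpha^*}$. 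For Type~1 this gives $-\sinh\bm a_1T^*+\cosh\bm a_1N^*=(T^*-\lambda\tau^*N^*)/\sqrt{\lambda^2\tau^{*2}-1}$. The values of $\kappa_\beta$ and $\tau_\beta$ are \eqref{eq:L3-1-kt}--\eqref{eq:L3-5-kt} transported to $x$.

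\textbf{Main obstacle.} The only real difficulty is sign bookkeeping across the five types. This covers the signed angles $\bm A_k$ and the signs $\delta_k$ in the curvature formulas, and in Types~4 and~5 the sign $-\lambda\tau^*$ that produces $T^*+\lambda\tau^*N^*$. I would verify each type individually against the corollary rather than try to unify them.
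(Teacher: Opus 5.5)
Your proposal is correct and takes essentially the same route as the paper's proof. Both transport the arc-length relations of Theorem~\ref{lem:relationsL3} and Corollary~\ref{corolario par s e par s*} to the generic parameter by reparametrization invariance, substitute the explicit $\cosh\bm A_k,\sinh\bm A_k$ (resp.\ $\cosh\bm a_k,\sinh\bm a_k$), and convert $d/ds^*$ to $d/dt$ by the chain rule with $ds/ds^*=\sqrt{q_k}$. The sign checks you single out also come out as stated: the $T_\beta-rB_\beta$ in Type~3, and $T_{\beta^*}+\lambda\tau_{\beta^*}N_{\beta^*}$ in Types~4 and~5.
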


\begin{proof}
By Corollary~\ref{corolario par s e par s*}, $\tau_\alpha\neq 0$, so the quotients $\kappa_\beta/\tau_\beta$ are defined. Since $\beta$ and $\beta^{*}$ are reparametrizations of $\alpha$ and $\alpha^{*}$ with positive derivative, their Frenet apparatus are those of $\alpha$ and $\alpha^{*}$ at the corresponding parameter, by \eqref{tangente at t}, \eqref{kappa at t}, \eqref{normal at t}, \eqref{binormal at t}, \eqref{torsion at t}.
\begin{enumerate}
    \item
    \begin{enumerate}
    \item From equation \eqref{eq:alpha_star} at $s^{*}=s^{*}(s(t))$, using $s(s^{*}(s(t)))=s(t)$,
$$ \alpha^*(s^*(s(t))) = \alpha(s(t)) + \lambda N_\alpha(s(t)),$$
and by equation \eqref{normal at t}
$$ \beta^{*}(t) = \beta(t) + \lambda N_\beta(t).$$
    \item From equation \eqref{fundamental hypotesis Mannheim Pair} at $s=s(t)$
    $$B_{\alpha^{*}}(s^{*}(s(t)))=-N_{\alpha}(s(t)),$$
    and by equation \eqref{binormal at t}
     $$B_{\beta^{*}}(t)=-N_{\beta}(t).$$
    \item \textbf{Type 1.} From equations \eqref{eq:T_star_1} and \eqref{eq:L3-1-A} at $s^{*}=s^{*}(s(t))$, where $\bm A_1(s^{*}(s(t)))=\bm a_1(s(t))$,
$$T_{\alpha^*}(s^{*}(s(t))) = -\dfrac{1}{\sqrt{\left(\dfrac{\kappa_{\alpha}(s(t))}{\tau_{\alpha}(s(t))}\right)^2-1}}\left(T_\alpha(s(t))+\dfrac{\kappa_{\alpha}(s(t))}{\tau_{\alpha}(s(t))}B_\alpha(s(t))\right),$$
and by equations \eqref{tangente at t}, \eqref{kappa at t}, \eqref{binormal at t}, \eqref{torsion at t} we obtain the formula for $T_{\beta^{*}}(t)$. The formula for $N_{\beta^{*}}(t)$ follows in the same way from \eqref{eq:N_star_1}. The formula for $\tau_{\beta^{*}}(t)$ follows from equation \eqref{tau* e tau e kappa L3} at $s^{*}=s^{*}(s(t))$. For $\kappa_{\beta^{*}}$, put $g(s^{*})=\dfrac{\kappa_{\alpha}(s(s^{*}))}{\tau_{\alpha}(s(s^{*}))}$. Then $\dfrac{\kappa_\beta(t)}{\tau_\beta(t)}=g(s^{*}(s(t)))$ and, by the chain rule and \eqref{eq:L3-1-ds},
$$\dfrac{d}{dt}\left(\dfrac{\kappa_\beta}{\tau_\beta}\right)=g'(s^{*})\,\dfrac{ds^{*}}{ds}\,\dfrac{ds}{dt}=\dfrac{g'(s^{*})}{\sqrt{g(s^{*})^2-1}}\,\dfrac{ds}{dt},$$
so $g'(s^{*}(s(t)))=\sqrt{\left(\dfrac{\kappa_\beta}{\tau_\beta}\right)^2-1}\,\dfrac{\frac{d}{dt}(\kappa_\beta/\tau_\beta)}{ds/dt}$. Then, by \eqref{eq:L3-1-kappa*} at $s^{*}=s^{*}(s(t))$,
$$\kappa_{\beta^{*}}(t)=\kappa_{\alpha^{*}}(s^{*}(s(t)))=\dfrac{-g'(s^{*}(s(t)))}{g(s^{*}(s(t)))^2-1}=-\dfrac{\dfrac{d}{dt}\left(\dfrac{\kappa_{\beta}(t)}{\tau_{\beta}(t)}\right)}{\dfrac{ds}{dt}\sqrt{\left(\dfrac{\kappa_{\beta}(t)}{\tau_{\beta}(t)}\right)^2-1}}.$$

\textbf{Types 2 to 5.} The argument is the same, using the equations of the corresponding type: \eqref{eq:T_star_2}, \eqref{eq:N_star_2}, \eqref{eq:L3-2-A}, \eqref{eq:L3-2-ds} and \eqref{eq:L3-2-kappa*} for Type~2; \eqref{eq:T_star_3}, \eqref{eq:N_star_3}, \eqref{eq:L3-3-A}, \eqref{eq:L3-3-ds} and \eqref{eq:L3-3-kappa*} for Type~3; \eqref{eq:T_star_4}, \eqref{eq:N_star_4}, \eqref{eq:L3-4-A}, \eqref{eq:L3-4-ds} and \eqref{eq:L3-4-kappa*} for Type~4; \eqref{eq:T_star_5}, \eqref{eq:N_star_5}, \eqref{eq:L3-5-A}, \eqref{eq:L3-5-ds} and \eqref{eq:L3-5-kappa*} for Type~5. The only changes are the square root in $\frac{ds^*}{ds}$, which is $\sqrt{g^2-1}$ in Types~1 and~4, $\sqrt{1-g^2}$ in Types~2 and~5 and $\sqrt{1+g^2}$ in Type~3, and the sign of $\kappa_{\alpha^*}$, which is $-$ in Types~1 and~5 and $+$ in Types~2, 3 and~4.
    \end{enumerate}
    \item
\begin{enumerate}
        \item  From equation \eqref{eq:alpha} at $s=s(s^{*}(x))$, using $s^{*}(s(s^{*}(x)))=s^{*}(x)$,
$$\alpha(s(s^{*}(x)))=\alpha^*(s^{*}(x))+\lambda B_{\alpha^{*}}(s^{*}(x)),$$
therefore
$$\beta(x)=\beta^*(x)+\lambda B_{\beta^{*}}(x).$$
\item From equation \eqref{fundamental hypotesis Mannheim Pair} at $s=s(s^{*}(x))$
    $$N_{\alpha}(s(s^{*}(x)))=-B_{\alpha^{*}}(s^{*}(x)),$$
    therefore
     $$N_{\beta}(x)=-B_{\beta^{*}}(x).$$
\item \textbf{Type 1.} From equations \eqref{eq:T_alpha_1} and \eqref{eq:L3-1-a} at $s=s(s^{*}(x))$, where $\bm a_1(s(s^{*}(x)))=\bm A_1(s^{*}(x))$,
   $$T_\alpha(s(s^{*}(x)))=\dfrac{1}{\sqrt{\lambda^2\tau_{\alpha^*}(s^{*}(x))^2-1}}\Big(T_{\alpha^*}(s^{*}(x))-\lambda\tau_{\alpha^*}(s^{*}(x))N_{\alpha^*}(s^{*}(x))\Big),$$
therefore, since $\tau_{\alpha^*}(s^{*}(x))=\tau_{\beta^*}(x)$,
$$T_\beta(x)=\dfrac{1}{\sqrt{\lambda^2\tau_{\beta^*}(x)^2-1}}\Big(T_{\beta^*}(x)-\lambda\tau_{\beta^*}(x)N_{\beta^*}(x)\Big).$$
The formula for $B_\beta(x)$ follows in the same way from \eqref{eq:B_alpha_1}. The formulas for $\tau_\beta(x)$ and $\kappa_\beta(x)$ follow from equation \eqref{eq:L3-1-kt} at $s=s(s^{*}(x))$.

\textbf{Types 2 to 5.} The argument is the same, using the equations of the corresponding type: \eqref{eq:T_alpha_2}, \eqref{eq:B_alpha_2}, \eqref{eq:L3-2-a} and \eqref{eq:L3-2-kt} for Type~2; \eqref{eq:T_alpha_3}, \eqref{eq:B_alpha_3}, \eqref{eq:L3-3-a} and \eqref{eq:L3-3-kt} for Type~3; \eqref{eq:T_alpha_4}, \eqref{eq:B_alpha_4}, \eqref{eq:L3-4-a} and \eqref{eq:L3-4-kt} for Type~4; \eqref{eq:T_alpha_5}, \eqref{eq:B_alpha_5}, \eqref{eq:L3-5-a} and \eqref{eq:L3-5-kt} for Type~5.
\end{enumerate}
\end{enumerate}
\end{proof}

\section{Auxiliary angle function to Mannheim condition}\label{sec:choice_phi mannheim condition}

Our goal is to choose a convenient parameter $t$ and find functions $\Theta(t)$ and $\Phi(t)$ such that $\beta$ satisfies the Mannheim condition and the resulting integrals are elementary. The method is the same whether $T_\alpha$, $N_\alpha$ or $B_\alpha$ is timelike, so we describe it once and then collect the five types of Mannheim pair in a single theorem.

Throughout this section, $\alpha$ is not a helix, $\dfrac{ds}{dt}>0$ and $\Phi'(t)>0$. The method has four steps:
\begin{enumerate}
\item define an auxiliary angle $\delta(t)$ in terms of $\Theta'$, $\Phi'$ and $\Phi$, namely $\cot\delta=\dfrac{\Theta'\sin\Phi}{\Phi'}$ if $T_\alpha$ is timelike, $\tanh\delta=\dfrac{\Theta'\cosh\Phi}{\Phi'}$ if $N_\alpha$ is timelike and $\coth\delta=\dfrac{\Theta'\cosh\Phi}{\Phi'}$ if $B_\alpha$ is timelike;
\item write $\kappa_{\beta}(t)=\kappa_\alpha(s(t))$ and $\tau_{\beta}(t)=\tau_\alpha(s(t))$ in terms of $\Phi$, $\delta$ and $\delta'$, and substitute them in the Mannheim condition;
\item solve the resulting equation for $\dfrac{ds}{dt}$;
\item recover $\Theta(t)$ from the relation that defines $\delta$.
\end{enumerate}

The four steps are carried out in the proof of the theorem below, which collects the five types. The causal character of $\alpha$ and the sign of $\lambda$ determine the type of the pair: Types~2 and~4 correspond to $T_\alpha$ timelike, Type~3 to $N_\alpha$ timelike and Types~1 and~5 to $B_\alpha$ timelike.

\begin{theorem}\label{thm:general_mannheim}
Let $\lambda\neq0$ and $m$ be real constants and let $\delta(t)$ and $\Phi(t)$, with $\Phi'(t)>0$, be differentiable functions. Let $\kg(t)\neq0$, $\Theta(t)$, $\dfrac{ds}{dt}$, $\mathbf{u}(t)$, $\kappa_\beta(t)$ and $\tau_\beta(t)$ be defined, according to the type, as follows (in each item $\delta$, $\Phi$ and $\lambda$ are such that $\dfrac{ds}{dt}>0$; this fixes the sign of $\lambda$ and the range of $|\kg|$ indicated).
\begin{enumerate}[label=\textbf{Type \arabic*.}, leftmargin=*]
\item ($B_\beta$ timelike, $T_{\beta^*}$ timelike; $\lambda>0$, $\delta>0$, $|\kg|<1$)
\begin{align}
\kg &= \cosh\delta\,\tanh\Phi + \dfrac{\delta'}{\Phi'}\sinh\delta, \label{F Type 1}\\
\Theta &= \int \Phi'\,\frac{\coth\delta}{\cosh\Phi}\,dt + m, \label{Theta Type 1}\\
\dfrac{ds}{dt} &= \lambda\,\Phi'\operatorname{csch}\delta\,\big(1-\kg^2\big), \label{dsdx Type 1}\\
\mathbf{u} &= \big(\cos\Theta\cosh\Phi,\ \sin\Theta\cosh\Phi,\ \sinh\Phi\big), \nonumber\\
\kappa_{\beta} &= \frac{1}{\lambda\big(1-\kg^2\big)},\qquad
\tau_{\beta} = \frac{\kg}{\lambda\big(1-\kg^2\big)}. \label{k e tau Type 1}
\end{align}

\item ($T_\beta$ timelike, $T_{\beta^*}$ timelike; $\lambda>0$, $\delta\in(0,\pi)$, $\Phi\in(-\frac\pi2,\frac\pi2)\setminus\{0\}$, $|\kg|>1$)
\begin{align}
\kg &= \cos\delta\,\csc\Phi - \dfrac{\sin\delta\,\cos\Phi\,\delta'}{\Phi'}, \label{F Type 2}\\
\Theta &= \int \Phi'\csc\Phi\cot\delta\,dt + m, \label{Theta Type 2}\\
\dfrac{ds}{dt} &= \lambda\csc\delta\,\sec\Phi\,\Phi'\,\big(\kg^2-1\big), \label{dsdx Type 2}\\
\mathbf{u} &= \big(\cos\Theta\tan\Phi,\ \sin\Theta\tan\Phi,\ \sec\Phi\big), \nonumber\\
\kappa_{\beta} &= \frac{1}{\lambda\big(\kg^2-1\big)},\qquad
\tau_{\beta} = \frac{\kg}{\lambda\big(\kg^2-1\big)}. \label{k e tau Type 2}
\end{align}

\item ($N_\beta$ timelike, $B_{\beta^*}$ timelike; $\lambda<0$)
\begin{align}
\kg &= -\sinh\delta\,\tanh\Phi - \dfrac{\delta'}{\Phi'}\cosh\delta, \label{F Type 3}\\
\Theta &= \int \Phi'\,\frac{\tanh\delta}{\cosh\Phi}\,dt + m, \label{Theta Type 3}\\
\dfrac{ds}{dt} &= (-\lambda)\,\Phi'\operatorname{sech}\delta\,\big(1+\kg^2\big), \label{dsdx Type 3}\\
\mathbf{u} &= \big(\cos\Theta\cosh\Phi,\ \sin\Theta\cosh\Phi,\ \sinh\Phi\big), \nonumber\\
\kappa_{\beta} &= \frac{1}{(-\lambda)\big(1+\kg^2\big)},\qquad
\tau_{\beta} = \frac{\kg}{(-\lambda)\big(1+\kg^2\big)}. \label{k e tau Type 3}
\end{align}

\item ($T_\beta$ timelike, $N_{\beta^*}$ timelike; $\lambda<0$, $\delta\in(0,\pi)$, $\Phi\in(-\frac\pi2,\frac\pi2)\setminus\{0\}$, $|\kg|<1$)
\begin{align}
\kg &= \cos\delta\,\csc\Phi - \dfrac{\sin\delta\,\cos\Phi\,\delta'}{\Phi'}, \label{F Type 4}\\
\Theta &= \int \Phi'\csc\Phi\cot\delta\,dt + m, \label{Theta Type 4}\\
\dfrac{ds}{dt} &= \lambda\csc\delta\,\sec\Phi\,\Phi'\,\big(\kg^2-1\big), \label{dsdx Type 4}\\
\mathbf{u} &= \big(\cos\Theta\tan\Phi,\ \sin\Theta\tan\Phi,\ \sec\Phi\big), \nonumber\\
\kappa_{\beta} &= \frac{1}{\lambda\big(\kg^2-1\big)},\qquad
\tau_{\beta} = \frac{\kg}{\lambda\big(\kg^2-1\big)}. \label{k e tau Type 4}
\end{align}

\item ($B_\beta$ timelike, $N_{\beta^*}$ timelike; $\lambda<0$, $\delta>0$, $|\kg|>1$)
\begin{align}
\kg &= \cosh\delta\,\tanh\Phi + \dfrac{\delta'}{\Phi'}\sinh\delta, \label{F Type 5}\\
\Theta &= \int \Phi'\,\frac{\coth\delta}{\cosh\Phi}\,dt + m, \label{Theta Type 5}\\
\dfrac{ds}{dt} &= \lambda\,\Phi'\operatorname{csch}\delta\,\big(1-\kg^2\big), \label{dsdx Type 5}\\
\mathbf{u} &= \big(\cos\Theta\cosh\Phi,\ \sin\Theta\cosh\Phi,\ \sinh\Phi\big), \nonumber\\
\kappa_{\beta} &= \frac{1}{\lambda\big(1-\kg^2\big)},\qquad
\tau_{\beta} = \frac{\kg}{\lambda\big(1-\kg^2\big)}. \label{k e tau Type 5}
\end{align}
\end{enumerate}
Then the curve $\beta$ given by
\begin{equation}\label{d alpha s}
\frac{d\beta}{dt}(t) = \frac{ds}{dt}\,\mathbf{u}(t)
\end{equation}
is a Mannheim curve of Type~$k$ (up to the normalization of Remark~\ref{rem:normalization}), with partner $\beta^{*}=\beta+\lambda N_\beta$, and its curvature and torsion are those listed in the corresponding item. In every type, $\tau_\beta=\kg\,\kappa_\beta$, that is, $\kg=\tau_\beta/\kappa_\beta$. Conversely, let $\beta$ be a Mannheim curve of Type~$k$, parametrized by $t$ with $\dfrac{ds}{dt}>0$, whose tangent $T_\beta=\mathbf{u}$ has angular coordinates $(\Theta,\Phi)$ with $\Phi'>0$ (and, in Types~2 and~4, $\Phi\in(-\frac\pi2,\frac\pi2)\setminus\{0\}$; in Types~1 and~5, $\Theta'>0$). Then there exist $\delta(t)$ as in the corresponding item such that the formulas of Type~$k$ and \eqref{d alpha s} hold.
\end{theorem}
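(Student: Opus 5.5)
The plan is to carry out the four steps announced before the statement, one causal case at a time, using the formulas of Proposition~\ref{thm:arbitrary_param}. Once $\Theta'$ is eliminated in favour of $\delta$, both the curvature and $\kg=\tau_\beta/\kappa_\beta$ factor through the single quantity $\Phi'/\sin\delta$ (or its hyperbolic analogue). Take Case~3 ($B_\beta$ timelike, Types~1 and~5) as the model. Define $\delta$ by $\coth\delta=\Theta'\cosh\Phi/\Phi'$, which is possible because $|\Theta'\cosh\Phi|>|\Phi'|$ there, with $\delta>0$ when $\Theta'>0$. Then $\Theta'=\Phi'\coth\delta/\cosh\Phi$, and this is exactly \eqref{Theta Type 1} after integration. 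It also gives $\Theta'^2\cosh^2\Phi-\Phi'^2=\Phi'^2\operatorname{csch}^2\delta$, so \eqref{eq:kappa_x_3} becomes $D\kappa_\beta=\Phi'\operatorname{csch}\delta$. In \eqref{eq:tau_x_3} the arctanh term is $\operatorname{arctanh}(\tanh\delta)=\delta$, so $D\tau_\beta=\Phi'\coth\delta\tanh\Phi+\delta'$. Dividing the two expressions gives $\tau_\beta/\kappa_\beta=\cosh\delta\tanh\Phi+\sinh\delta\,\delta'/\Phi'$, which is \eqref{F Type 1}, and by \eqref{geodesic curvature and quotient L3} this is $\kg$.

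Case~1 ($T_\beta$ timelike) goes the same way. Put $\cot\delta=\Theta'\sin\Phi/\Phi'$ with $\delta\in(0,\pi)$. Then $\Theta'^2\tan^2\Phi+\Phi'^2\sec^2\Phi=\Phi'^2\sec^2\Phi\csc^2\delta$, so $D\kappa_\beta=\Phi'\sec\Phi\csc\delta$. The arctan term in \eqref{eq:tau_x_1} is $\arctan(\tan\delta)$, whose derivative is $\delta'$. The quotient then gives \eqref{F Type 2}. Case~2 ($N_\beta$ timelike, Type~3) uses $\tanh\delta=\Theta'\cosh\Phi/\Phi'$, which yields $D\kappa_\beta=\Phi'\operatorname{sech}\delta$ and \eqref{F Type 3}. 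I would do one sign check here, on the minus sign in \eqref{eq:tau_x_2}.

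Step three substitutes these into the Mannheim condition \eqref{Mannheim x geral}, with the signs $\varepsilon_1,\varepsilon_3$ of each case. Writing the condition as $1=-\lambda\kappa_\beta(\varepsilon_3+\varepsilon_1\kg^2)$ gives $\kappa_\beta=1/\bigl(\lambda(1-\kg^2)\bigr)$ in Case~3, $1/\bigl(\lambda(\kg^2-1)\bigr)$ in Case~1, and $1/\bigl((-\lambda)(1+\kg^2)\bigr)$ in Case~2. Equating with $D\kappa_\beta$ from the previous step and solving for $D$ gives the formulas for $ds/dt$ directly, and $\tau_\beta=\kg\kappa_\beta$ then holds automatically. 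Requiring $ds/dt>0$ forces the stated sign of $\lambda$ and the stated range of $|\kg|$. The type (1 versus 5, 2 versus 4) is then read off from Corollary~\ref{corolario par s e par s*}: $\lambda>0$ with $|r|>1$ gives Type~1, and so on, where $r=1/\kg$.

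For the direct statement, given $\delta,\Phi$, define $\Theta$ and $ds/dt$ as above and integrate \eqref{d alpha s}. Reversing the computation shows that the curvature and torsion are as listed. By \eqref{Mannheim x geral} and the reciprocal part of Theorem~\ref{theorem caracterization mannheim pair}, using $\tau_\beta\ne0$ (which needs $\kg\ne0$), $\beta$ is a Mannheim curve with partner $\beta+\lambda N_\beta$. For the converse, start from a Mannheim curve and define $\delta$ from its $\Theta,\Phi$ as in the first paragraph. The hypotheses $\Phi'>0$, the range of $\Phi$, and $\Theta'>0$ are what make the inverse hyperbolic or trigonometric function well defined with the right branch. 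The same identities then reproduce all the formulas.

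The main obstacle is bookkeeping of signs and branches rather than anything conceptual. It involves matching the causal case and the sign of $\lambda$ to the correct type, checking that the arctan or arctanh in the torsion formula returns exactly $\delta$ on the chosen range, and the orientation normalization of Remark~\ref{rem:normalization}. I would verify each type carefully against Corollary~\ref{cor:angles}.
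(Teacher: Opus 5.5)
Your proposal is essentially the paper's own proof. It has the same definition of $\delta$ in the three causal cases, the same identities $D\kappa_\beta=K$ and $D\tau_\beta=K\kg$ (with $K=\Phi'\sec\Phi\csc\delta$, $\Phi'\operatorname{sech}\delta$, $\Phi'\operatorname{csch}\delta$), and the Mannheim equation solved for $D=ds/dt$. The type is likewise fixed by the causal case and the sign of $\lambda$; the paper gets the latter from $\varepsilon_{1*}=-\operatorname{sgn}\lambda$ via $(ds/ds^*)^2>0$ rather than by exclusion from Corollary~\ref{corolario par s e par s*}, but either works.

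The only detail you skip is in the $T_\beta$-timelike case: $\arctan(\tan\delta)$ is undefined where $\Theta'=0$. The paper handles this by noting that $\delta$ is the polar angle of $(\Theta'\tan\Phi,\Phi'\sec\Phi)=K(\cos\delta,\sin\delta)$, so $D\tau_\beta=\Theta'\sec\Phi-\delta'$ holds at those points too.
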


\begin{proof}
Write $D=\dfrac{ds}{dt}>0$. Types~2 and~4 have $T_\beta$ timelike (case~(i)), Type~3 has $N_\beta$ timelike (case~(ii)) and Types~1 and~5 have $B_\beta$ timelike (case~(iii)). Let
\begin{align*}
\text{(i)}\quad & K=\Phi'\sec\Phi\csc\delta, &
\text{(ii)}\quad & K=\Phi'\operatorname{sech}\delta, &
\text{(iii)}\quad & K=\Phi'\operatorname{csch}\delta,
\end{align*}
so $K>0$ under the hypotheses of the statement.

\emph{Step 1: the relation defining $\delta$.} The auxiliary angle is defined, in each case, by
\begin{equation}\label{eq:def_delta}
\text{(i)}\ \cot\delta=\frac{\Theta'\sin\Phi}{\Phi'},\qquad
\text{(ii)}\ \tanh\delta=\frac{\Theta'\cosh\Phi}{\Phi'},\qquad
\text{(iii)}\ \coth\delta=\frac{\Theta'\cosh\Phi}{\Phi'}.
\end{equation}
Since $\Phi'>0$ and $\sin\Phi\neq0$ in case~(i), these relations are equivalent to the integrands in \eqref{Theta Type 2}, \eqref{Theta Type 4}, \eqref{Theta Type 3}, \eqref{Theta Type 1} and \eqref{Theta Type 5}:
\[
\Theta'=\Phi'\csc\Phi\cot\delta,\qquad
\Theta'=\Phi'\,\frac{\tanh\delta}{\cosh\Phi},\qquad
\Theta'=\Phi'\,\frac{\coth\delta}{\cosh\Phi},
\]
and $m$ is the constant of integration.

\emph{Step 2: $\kappa_\beta$ and $\tau_\beta$ in terms of $\kg$.} By \eqref{eq:def_delta},
\begin{align*}
\text{(i)}\quad & \Theta'^2\tan^2\Phi+\Phi'^2\sec^2\Phi=\Phi'^2\sec^2\Phi\csc^2\delta=K^2,\\
\text{(ii)}\quad & \Phi'^2-\Theta'^2\cosh^2\Phi=\Phi'^2\operatorname{sech}^2\delta=K^2,\\
\text{(iii)}\quad & \Theta'^2\cosh^2\Phi-\Phi'^2=\Phi'^2\operatorname{csch}^2\delta=K^2,
\end{align*}
so by \eqref{eq:kappa_x_1}, \eqref{eq:kappa_x_2} and \eqref{eq:kappa_x_3}, $\kappa_\beta=K/D$ in the three cases. The numerators of $\tau_\beta$ in the Frenet apparatus with respect to a generic parameter (equations \eqref{eq:tau_x_1}, \eqref{eq:tau_x_2} and \eqref{eq:tau_x_3}) are
\begin{align*}
\text{(i)}\quad & \Theta'\sec\Phi-\delta'=\Phi'\cot\delta\csc\Phi\sec\Phi-\delta'=K\,\kg,\\
\text{(ii)}\quad & -\big(\Theta'\sinh\Phi+\delta'\big)=-\big(\Phi'\tanh\delta\tanh\Phi+\delta'\big)=K\,\kg,\\
\text{(iii)}\quad & \Theta'\sinh\Phi+\delta'=\Phi'\coth\delta\tanh\Phi+\delta'=K\,\kg,
\end{align*}
with $\kg$ given by \eqref{F Type 2}, \eqref{F Type 4}, \eqref{F Type 3}, \eqref{F Type 1} and \eqref{F Type 5} respectively. (In case~(i), $(\Theta'\tan\Phi,\Phi'\sec\Phi)=K(\cos\delta,\sin\delta)$, so $\delta$ is the polar angle of this vector, and $\tau_\beta D=\Theta'\sec\Phi-\delta'$ holds also where $\Theta'=0$.) Hence
\begin{equation}\label{eq:proof_kappa_tau}
\kappa_\beta=\frac{K}{D},\qquad \tau_\beta=\kg\,\frac{K}{D}=\kg\,\kappa_\beta .
\end{equation}

\emph{Step 3: the Mannheim condition is equivalent to the formula for $ds/dt$.} By Theorem~\ref{theorem caracterization mannheim pair}, $\beta$ is a Mannheim curve if and only if \eqref{Mannheim equation how hypotesis} holds, that is, $\kappa_\beta(1+\lambda\varepsilon_3\kappa_\beta)\varepsilon_1+\lambda\tau_\beta^2=0$. With $(\varepsilon_1,\varepsilon_3)=(-1,1)$, $(1,1)$ and $(1,-1)$ in cases (i), (ii) and (iii), and using $\tau_\beta^2=\kg^2\kappa_\beta^2$, this is
\[
\text{(i)}\ \kappa_\beta=\lambda\kappa_\beta^2\big(\kg^2-1\big),\qquad
\text{(ii)}\ \kappa_\beta=-\lambda\kappa_\beta^2\big(1+\kg^2\big),\qquad
\text{(iii)}\ \kappa_\beta=\lambda\kappa_\beta^2\big(1-\kg^2\big).
\]
Dividing by $\kappa_\beta>0$ and substituting $\kappa_\beta=K/D$:
\begin{align*}
\text{(i)}\quad & D=\lambda K\,(\kg^2-1),\\
\text{(ii)}\quad & D=(-\lambda)K\,(1+\kg^2),\\
\text{(iii)}\quad & D=\lambda K\,(1-\kg^2).
\end{align*}
Writing $K$ explicitly, these are \eqref{dsdx Type 2} and \eqref{dsdx Type 4}, \eqref{dsdx Type 3}, and \eqref{dsdx Type 1} and \eqref{dsdx Type 5}. Since $D>0$ and $K>0$, they force $\lambda(\kg^2-1)>0$ in case~(i), $\lambda<0$ in case~(ii) and $\lambda(1-\kg^2)>0$ in case~(iii). When they hold, \eqref{eq:proof_kappa_tau} gives \eqref{k e tau Type 1}--\eqref{k e tau Type 5}.

\emph{Step 4: the type of the pair.} By Theorem~\ref{theorem caracterization mannheim pair}, $\left(\dfrac{ds}{ds^*}\right)^2=-\dfrac{\varepsilon_{1*}}{\lambda}\dfrac{\kappa_\beta}{\tau_\beta^2}>0$, and $\kappa_\beta>0$, so $\varepsilon_{1*}=-\operatorname{sgn}\lambda$: $T_{\beta^*}$ is timelike if and only if $\lambda>0$. Since $B_{\beta^*}=-N_\beta$ and a timelike vector of $\mathbb{L}^3$ is orthogonal only to spacelike vectors, exactly one of $T_{\beta^*},N_{\beta^*},B_{\beta^*}$ is timelike. Therefore:
\begin{itemize}
\item case~(i), $N_\beta$ spacelike: $\lambda>0$ gives $T_{\beta^*}$ timelike (Type~2), $\lambda<0$ gives $T_{\beta^*}$ spacelike, $B_{\beta^*}$ spacelike and $N_{\beta^*}$ timelike (Type~4);
\item case~(ii), $N_\beta$ timelike: $\lambda<0$, $T_{\beta^*}$ spacelike and $B_{\beta^*}=-N_\beta$ timelike (Type~3);
\item case~(iii), $N_\beta$ spacelike: $\lambda>0$ gives $T_{\beta^*}$ timelike (Type~1), $\lambda<0$ gives $N_{\beta^*}$ timelike (Type~5).
\end{itemize}

\emph{Direct part.} Given $\delta$ and $\Phi$ as in the statement, define $\Theta$ and $D$ by the formulas of Type~$k$ and $\beta$ by \eqref{d alpha s}. Since $\mathbf{u}$ is a unit vector, $|\beta'|=D$, so $s$ is the arc length of $\beta$ and $T_\beta=\mathbf{u}$, with angular coordinates $(\Theta,\Phi)$. By Step~1, \eqref{eq:def_delta} holds, so Step~2 applies. By Step~3, $\beta$ satisfies the Mannheim equation, and $\kg\neq0$ gives $\tau_\beta=\kg\,\kappa_\beta\neq0$; by Theorem~\ref{theorem caracterization mannheim pair}, $\beta$ is a Mannheim curve with partner $\beta^*=\beta+\lambda N_\beta$. Its causal character follows from $T_\beta'=\kappa_\beta D\,N_\beta$ and $\langle \mathbf{u}',\mathbf{u}'\rangle=\Theta'^2\tan^2\Phi+\Phi'^2\sec^2\Phi=K^2>0$ in case~(i), $\Theta'^2\cosh^2\Phi-\Phi'^2=-K^2<0$ in case~(ii) and $\Theta'^2\cosh^2\Phi-\Phi'^2=K^2>0$ in case~(iii): $T_\beta$ is timelike and $N_\beta$ spacelike in case~(i), $T_\beta$ spacelike and $N_\beta$ timelike in case~(ii), and $T_\beta,N_\beta$ spacelike (so $B_\beta$ timelike) in case~(iii). Step~4 gives the type.

\emph{Converse.} Let $\beta$ be a Mannheim curve of Type~$k$, with $D>0$ and $T_\beta=\mathbf{u}$ as in the statement. Since $\beta$ is a Mannheim curve, $\tau_\beta\neq0$ and \eqref{Mannheim equation how hypotesis} holds. The causal character of $N_\beta$ gives $\Phi'^2-\Theta'^2\cosh^2\Phi>0$ in case~(ii) and $\Theta'^2\cosh^2\Phi-\Phi'^2>0$ in case~(iii), so $\delta$ is well defined by \eqref{eq:def_delta}: by $\delta=\operatorname{arccot}\big(\Theta'\sin\Phi/\Phi'\big)\in(0,\pi)$ in case~(i), by $\delta=\operatorname{arctanh}\big(\Theta'\cosh\Phi/\Phi'\big)$ in case~(ii), and by $\delta=\operatorname{arccoth}\big(\Theta'\cosh\Phi/\Phi'\big)>0$ in case~(iii) (here $\Theta'>0$). Now Steps~1 to~3 apply: \eqref{Mannheim equation how hypotesis} forces $D$ to be given by the formula of Type~$k$, $\Theta$ is given by the corresponding integral, $\kappa_\beta$ and $\tau_\beta$ are those listed, and $\beta'=D\,\mathbf{u}$.
\end{proof}

 \subsection{Examples}

Types~2 and~4 have $T_\beta$ timelike, Type~3 has $N_\beta$ timelike and Types~1 and~5 have $B_\beta$ timelike. Each example lists the basic functions $\Phi,\Theta,\delta$, the auxiliary functions, the curve $\beta$ and its Mannheim partner $\beta^{*}=\beta+\lambda N_\beta$, together with their Frenet data. Types~4 and~5 use the same functions as Types~2 and~1, with the opposite sign of $\lambda$, so only the data that change are listed. As noted in Remark~\ref{rem:normalization}, the pairs of Types~1 and~4 below are given in the orientation class with $\tau_\beta>0$.

\begin{example}[Type 1]

\textit{Basic functions} ($\lambda=1$, $t\in(0,\operatorname{arccosh}2)$):
\begin{equation}
\begin{aligned}
\Phi(t)&=t,\\
\Theta(t)&=4\arctan(e^t),\\
\delta(t)&=\operatorname{arccoth}(2)=\dfrac12\ln 3,
\end{aligned}
\end{equation}
that is, $\sinh\delta=\dfrac{1}{\sqrt3}$, $\cosh\delta=\dfrac{2}{\sqrt3}$ and $\operatorname{csch}\delta=\sqrt3$.

\textit{Auxiliary functions:}
\begin{equation}
\kg(t)=\dfrac{2}{\sqrt3}\tanh t,\qquad
\dfrac{ds}{dt}=\dfrac{4\operatorname{sech}^2 t-1}{\sqrt3}\quad \text{or}\quad s(t)=\dfrac{1}{\sqrt3}\big(4\tanh t-t\big).
\end{equation}

\textit{Curvature and torsion:}
\begin{equation}\label{curvature and torsion type 1}
\kappa_{\beta}(t)=\dfrac{3}{4\operatorname{sech}^2 t-1},\qquad
\tau_{\beta}(t)=\dfrac{2\sqrt3\,\tanh t}{4\operatorname{sech}^2 t-1}.
\end{equation}

\textit{Tangent vector:}
\begin{equation}
\dfrac{d\beta}{dt}(t) = \dfrac{4\operatorname{sech}^2 t-1}{\sqrt3}\left((1-2\operatorname{sech}^{2}t)\cosh t,\ -2\tanh t,\ \sinh t\right)
\end{equation}

\textit{Coordinates of the curve (up to translation):}
\begin{equation}\label{eq:beta_explicit 3}
\beta(t) = \dfrac{1}{\sqrt3}\left(
 4\arctan(e^t)-\sinh t-4\operatorname{sech}t\tanh t,\
 2\ln(\cosh t)+4\operatorname{sech}^2 t,\
 -\cosh t-4\operatorname{sech}t
\right).
\end{equation}

\textit{Frenet frame:}
\begin{equation}
\begin{aligned}
T_{\beta}(t)&=
\left( \cosh t-2\operatorname{sech}t,\; -2\tanh t,\; \sinh t \right),\\[6pt]
N_{\beta}(t)&=
\dfrac{1}{\sqrt3}\left( 4\operatorname{sech}t\tanh t+(1-2\operatorname{sech}^2 t)\sinh t,\; -2\operatorname{sech}^2 t,\; \cosh t \right),\\[6pt]
B_{\beta}(t)&=
\dfrac{1}{\sqrt3}\left( -2\operatorname{sech}t\tanh t-2(1-2\operatorname{sech}^2 t)\sinh t,\; 3-2\operatorname{sech}^2 t,\; -2\cosh t \right).
\end{aligned}
\end{equation}

\textit{Mannheim partner (up to translation):}
\begin{equation}
\beta^{*}(t)=\dfrac{1}{\sqrt3}\left(
4\arctan(e^t)-2\operatorname{sech}t\tanh t,\
2\ln(\cosh t)+2\operatorname{sech}^2t,\
-4\operatorname{sech}t
\right)
\end{equation}

\textit{Frenet frame of $\beta^{*}$ ($T_{\beta^{*}}$ timelike):}
\begin{equation}
\begin{aligned}
T_{\beta^{*}}(t)&=\dfrac{1}{\sqrt{4\operatorname{sech}^2t-1}}\left( 2\operatorname{sech}t\tanh t,\; 1-2\operatorname{sech}^2t,\; 2\operatorname{sech}t \right),\\[6pt]
N_{\beta^{*}}(t)&=\dfrac{1}{\sqrt3\sqrt{4\operatorname{sech}^2t-1}}\left( \cosh t-2\operatorname{sech}t+4\operatorname{sech}^3t,\; 4\operatorname{sech}^2t\tanh t,\; \sinh t \right),\\[6pt]
B_{\beta^{*}}(t)&=-N_{\beta}(t).
\end{aligned}
\end{equation}

\textit{Curvature, torsion and angle of the pair:}
\begin{equation}
\dfrac{ds^{*}}{dt}=\dfrac{2}{\sqrt3}\tanh t\sqrt{4\operatorname{sech}^2t-1},\quad
\kappa_{\beta^{*}}=\dfrac{3\operatorname{sech}t}{\sinh t\,(4\operatorname{sech}^2t-1)^{3/2}},\quad
\tau_{\beta^{*}}=\dfrac{\sqrt3}{2}\coth t,
\end{equation}
\begin{equation}
\sinh\varphi_1=|\langle T_\beta,T_{\beta^{*}}\rangle|=\dfrac{2\tanh t}{\sqrt{4\operatorname{sech}^2t-1}},\qquad
\cosh\varphi_1=\dfrac{\sqrt3}{\sqrt{4\operatorname{sech}^2t-1}},\qquad \tanh\varphi_1=\kg(t).
\end{equation}
\textit{Non-helix and non-slant nature:}

Using the explicit expressions for $\kappa_\beta$ and $\tau_\beta$ from \eqref{curvature and torsion type 1}, we compute
\begin{equation}
\kappa^{\smash[b]{T_{\beta}}}_{g}(t)=\frac{{\tau_\beta}(t)}{{\kappa_\beta}(t)} = \frac{2}{\sqrt3}\tanh(t),
\end{equation}
which is clearly non-constant. By Lancret's theorem in $\mathbb{L}^3$ \cite[Theorem~2.12]{Lopez2014}, $\beta$ is not a helix.

There do not exist $\textbf{v}=(\bm{v}_{1},\bm{v}_{2},\bm{v}_{3})\in\mathbb{R}^3\setminus\{0\}$ and $\bm{d}\in\mathbb{R}$ such that $\langle N_{\beta}(t),\textbf{v}\rangle = \bm{d}$ for all $t$.

In fact, on the contrary,
\[
\langle N_{\beta}(t),\textbf{v}\rangle=\bm{d} \Longleftrightarrow
\bm{v}_{1}\cdot \big(4\operatorname{sech}(t)\tanh(t)+(1-2\operatorname{sech}^2(t))\sinh(t)\big) - \bm{v}_{2}\cdot2\operatorname{sech}^2(t) - \bm{v}_{3}\cdot \cosh(t) = \bm{d}\cdot\sqrt{3}.
\]
Both sides are real analytic, so this identity holds for all $t\in\mathbb{R}$. Since $4\operatorname{sech}t\tanh t+(1-2\operatorname{sech}^2t)\sinh t=\sinh t+2\operatorname{sech}t\tanh t$, letting $t\to+\infty$ the terms in $e^{t}$ force $\bm v_3=\bm v_1$; then the left-hand side tends to $0$, so $\bm d=0$; the coefficient of $e^{-t}$ is then $3\bm v_1$, so $\bm v_1=\bm v_3=0$, and finally $\bm v_2=0$. This contradicts $\textbf{v}\neq0$. Hence, by definition, $\beta$ is not a slant helix.

\end{example}

\begin{example}[Type 2]

\textit{Basic functions} ($\lambda>0$, $t\in(0,1)$):
\begin{equation}
\begin{aligned}
\Phi(t)&=\arccos\!\Big(\dfrac{18(1+t^2)}{t^6+3t^4+27t^2+18}\Big),\\
\Theta(t)&=\arctan\!\Big(\dfrac{t\,(t^4+3t^2+27)}{6\,(3-t^2)}\Big),\\
\delta(t)&=\operatorname{arccot}\!\Big(\dfrac{t\,(27+18t^2+4t^4-t^6)}{2\,(2t^6+6t^4+6t^2+9)}\Big).
\end{aligned}
\end{equation}

\textit{Auxiliary functions:}
\begin{equation}
\kg(t)=\dfrac{3-t^2}{1+t^2},\qquad
\dfrac{ds}{dt}= \dfrac{8\lambda(1-t^2)}{(1+t^2)^2}\quad \text{or}\quad s(t)=\dfrac{8\lambda t}{1+t^2}\quad \text{or}\quad t(s)
    =\tan\!\Big(\tfrac12\arcsin\dfrac{s}{4\lambda}\Big).
\end{equation}

\textit{Curvature and torsion:}
\begin{equation}\label{curvature and torsion type 2}
\kappa_{\beta}(t)=\dfrac{(1+t^2)^2}{8\lambda(1-t^2)},\qquad
\tau_{\beta}(t) = \dfrac{(3-t^2)(1+t^2)}{8\lambda(1-t^2)}.
\end{equation}

\textit{Tangent vector:}
\[
\dfrac{d\beta}{dt}(t) = \dfrac{8\lambda(1-t^2)}{(1+t^2)^2}
\begin{bmatrix}
\dfrac{t(3-t^2)}{3(1+t^2)} \\[12pt]
\dfrac{t^2(t^4+3t^2+27)}{18(1+t^2)} \\[12pt]
\dfrac{t^6+3t^4+27t^2+18}{18(1+t^2)}
\end{bmatrix}.
\]

\textit{Coordinates of the curve (up to translation):}
\begin{equation}\label{eq:beta_explicit 1}
\beta(t) = \lambda \begin{bmatrix}
 \dfrac{4}{3}\Big[\ln(1+t^2)+\dfrac{6t^2+2}{(1+t^2)^2}\Big] \\[14pt]
 \dfrac{-4t^7+4t^5+233t^3+75t}{27(1+t^2)^2}-\dfrac{25}{9}\arctan t \\[14pt]
 \dfrac{-4t^7+4t^5+287t^3+237t}{27(1+t^2)^2}-\dfrac{7}{9}\arctan t
\end{bmatrix}.
\end{equation}

\textit{Frenet frame (tangent $T_\beta$ timelike):}
\begin{equation}
\begin{aligned}
T_{\beta}(t)&=
\left( \dfrac{t(3-t^2)}{3(1+t^2)},\; \dfrac{t^2(t^4+3t^2+27)}{18(1+t^2)},\; \dfrac{t^6+3t^4+27t^2+18}{18(1+t^2)} \right),\\[6pt]
N_{\beta}(t)&=
\left( \dfrac{3-6t^2-t^4}{3(1+t^2)^2},\; \dfrac{t(2t^6+6t^4+6t^2+27)}{9(1+t^2)^2},\; \dfrac{t(2t^6+6t^4+6t^2+9)}{9(1+t^2)^2} \right),\\[6pt]
B_{\beta}(t)&=
\left( -\dfrac{t(t^4+2t^2+9)}{3(1+t^2)^2},\; \dfrac{t^8-4t^6-18t^4-45t^2+18}{18(1+t^2)^2},\; \dfrac{t^2(t^6-4t^4-18t^2-27)}{18(1+t^2)^2} \right).
\end{aligned}
\end{equation}

\textit{Mannheim pair $\beta^{*}(t)=\beta(t)+\lambda N_{\beta}(t)$ (up to translation):}
\begin{equation}\label{eq:beta_star_explicit 1}
\beta^{*}(t)=\lambda\begin{bmatrix}
\dfrac43\ln(1+t^2)+\dfrac{-t^4+18t^2+11}{3(1+t^2)^2}\\[14pt]
\dfrac{2t^7+22t^5+251t^3+156t}{27(1+t^2)^2}-\dfrac{25}{9}\arctan t\\[14pt]
\dfrac{2t^7+22t^5+305t^3+264t}{27(1+t^2)^2}-\dfrac79\arctan t
\end{bmatrix}.
\end{equation}

\textit{Frenet frame of $\beta^{*}$ (tangent $T_{\beta^{*}}$ timelike):}
\begin{equation}
\begin{aligned}
T_{\beta^{*}}(t)&=\dfrac{\big(-24t^3,\; -2t^6-18t^4+18t^2+9,\; -2t^6-18t^4+18t^2+27\big)}{9(1+t^2)\sqrt{8(1-t^2)}},\\[6pt]
N_{\beta^{*}}(t)&=\dfrac{\big(24t(t^2-3),\; 6t^8+20t^6+24t^4-63t^2+27,\; 6t^8+20t^6+24t^4-9t^2+9\big)}{9(1+t^2)^2\sqrt{8(1-t^2)}},\\[6pt]
B_{\beta^{*}}(t)&=-N_{\beta}(t).
\end{aligned}
\end{equation}

\textit{Curvature, torsion and angle of the pair:}
\begin{equation}
\dfrac{ds^{*}}{dt}=\dfrac{\lambda(3-t^2)\sqrt{8(1-t^2)}}{(1+t^2)^2},\quad
\kappa_{\beta^{*}}=\dfrac{t(1+t^2)^2}{\lambda(1-t^2)(3-t^2)\sqrt{8(1-t^2)}},\quad
\tau_{\beta^{*}}=\dfrac{1+t^2}{\lambda(3-t^2)},
\end{equation}
\begin{equation}
\cosh\varphi_2=-\langle T_\beta,T_{\beta^{*}}\rangle=\dfrac{3-t^2}{\sqrt{8(1-t^2)}},\qquad
\sinh\varphi_2=\dfrac{1+t^2}{\sqrt{8(1-t^2)}},\qquad \tanh\varphi_2=\dfrac{1}{\kg(t)}.
\end{equation}

\textit{Non-helix and non-slant nature:}

Using the explicit expressions for $\kappa_\beta$ and $\tau_\beta$ from \eqref{curvature and torsion type 2}, we compute
\begin{equation}
\kappa^{\smash[b]{T_{\beta}}}_{g}(t)=\frac{{\tau_\beta}(t)}{{\kappa_\beta}(t)} = \frac{3-t^2}{1+t^2},
\end{equation}
which is clearly non-constant. By Lancret's theorem in $\mathbb{L}^3$ \cite[Theorem~2.12]{Lopez2014}, $\beta$ is not a helix.

There do not exist $\textbf{v}=(\bm{v}_{1},\bm{v}_{2},\bm{v}_{3})\in\mathbb{R}^3\setminus\{0\}$ and $\bm{d}\in\mathbb{R}$ such that $\langle N_{\beta}(t),\textbf{v}\rangle = \bm{d}$ for all $t$.

In fact, on the contrary,
\[
\langle N_{\beta}(t),\textbf{v}\rangle=\bm{d} \Longleftrightarrow
3\bm{v}_{1}\cdot (3-6t^2-t^4) + \bm{v}_{2}\cdot t\,(2t^6+6t^4+6t^2+27) - \bm{v}_{3}\cdot t\,(2t^6+6t^4+6t^2+9) = \bm{d}\cdot9(1+t^2)^2.
\]
Both sides are polynomials in $t$ that agree on an interval, hence they coincide as polynomials. The coefficients of $t^7$ and $t$ give $\bm v_2=\bm v_3$ and $27\bm v_2=9\bm v_3$, so $\bm v_2=\bm v_3=0$; the coefficients of $t^4$ and $t^2$ give $-3\bm v_1=9\bm d$ and $-18\bm v_1=18\bm d$, so $\bm v_1=\bm d=0$. This contradicts $\textbf{v}\neq0$. Hence, by definition, $\beta$ is not a slant helix.
\end{example}

\begin{example}[Type 3]

\textit{Basic functions} ($\lambda=-1$, $t\in(0,+\infty)$):
\begin{equation}
\begin{aligned}
\Phi(t)&=t,\\
\Theta(t)&=\arctan(e^t),\\
\delta(t)&=\operatorname{arctanh}\!\Big(\dfrac12\Big)=\dfrac12\ln 3,
\end{aligned}
\end{equation}
that is, $\sinh\delta=\dfrac{1}{\sqrt3}$, $\cosh\delta=\dfrac{2}{\sqrt3}$ and $\operatorname{sech}\delta=\dfrac{\sqrt3}{2}$.

\textit{Auxiliary functions:}
\begin{equation}
\kg(t)=-\dfrac{1}{\sqrt3}\tanh t,\qquad
\dfrac{ds}{dt}=\dfrac{\sqrt3}{2}\Big(1+\dfrac13\tanh^2 t\Big)\quad \text{or}\quad s(t)=\dfrac{\sqrt3}{6}\big(4t-\tanh t\big).
\end{equation}

\textit{Curvature and torsion:}
\begin{equation}\label{curvature and torsion type 3}
\kappa_{\beta}(t)=\dfrac{1}{1+\frac13\tanh^2 t},\qquad
\tau_{\beta}(t)=-\dfrac{\frac{1}{\sqrt3}\tanh t}{1+\frac13\tanh^2 t}.
\end{equation}

\textit{Tangent vector:}
\begin{equation}
\dfrac{d\beta}{dt}(t) = \dfrac{\sqrt3}{2}\Big(1+\dfrac13\tanh^2 t\Big)\left(\dfrac{\cosh t}{\sqrt{1+e^{2t}}} ,\dfrac{e^t\cosh t}{\sqrt{1+e^{2t}}} ,\sinh t\right)
\end{equation}

\textit{Coordinates of the curve (up to translation):}
\begin{equation}\label{eq:beta_explicit 2}
\beta(t) = \dfrac{\sqrt3}{3}\begin{pmatrix}
\operatorname{arsinh}(e^t)-\dfrac{1+2e^{2t}}{e^t\sqrt{1+e^{2t}}}\\[8pt]
\sqrt{1+e^{2t}}+\dfrac{1}{\sqrt{1+e^{2t}}}-\ln\!\Big(\dfrac{e^t}{\sqrt{1+e^{2t}}-1}\Big)\\[8pt]
2\cosh t+\dfrac12\operatorname{sech}t
\end{pmatrix}.
\end{equation}

\textit{Frenet frame:}
\begin{equation}
\begin{aligned}
T_{\beta}(t)&=
\left( \dfrac{\cosh t}{\sqrt{1+e^{2t}}},\; \dfrac{e^t\cosh t}{\sqrt{1+e^{2t}}},\; \sinh t \right),\\[6pt]
N_{\beta}(t)&=
\dfrac{1}{\sqrt3}\left( -\dfrac{e^{-t}}{\sqrt{1+e^{2t}}},\; \dfrac{e^{2t}}{\sqrt{1+e^{2t}}},\; 2\cosh t \right),\\[6pt]
B_{\beta}(t)&=
\dfrac{1}{\sqrt3}\left( \dfrac{3e^t+e^{-t}}{2\sqrt{1+e^{2t}}},\; -\dfrac{e^{2t}+3}{2\sqrt{1+e^{2t}}},\; -\cosh t \right).
\end{aligned}
\end{equation}

\textit{Mannheim partner (up to translation):}
\begin{equation}
\beta^{*}(t)=\dfrac{\sqrt3}{3}\left(
\operatorname{arsinh}(e^t)-\dfrac{2e^t}{\sqrt{1+e^{2t}}},\
\dfrac{2}{\sqrt{1+e^{2t}}}-\ln\!\Big(\dfrac{e^t}{\sqrt{1+e^{2t}}-1}\Big),\
\dfrac12\operatorname{sech}t
\right)
\end{equation}

\textit{Frenet frame of $\beta^{*}$ ($B_{\beta^{*}}$ timelike):}
\begin{equation}
\begin{aligned}
T_{\beta^{*}}(t)&=\dfrac{1}{\sqrt{3+\tanh^2t}}\left( \dfrac{2e^t}{\sqrt{1+e^{2t}}},\; -\dfrac{2}{\sqrt{1+e^{2t}}},\; -\operatorname{sech}t \right),\\[6pt]
N_{\beta^{*}}(t)&=\dfrac{2}{\sqrt3\sqrt{3+\tanh^2t}}\left( \dfrac{2e^{2t}+1}{e^t(1+e^{2t})^{3/2}},\; \dfrac{e^{2t}(e^{2t}+2)}{(1+e^{2t})^{3/2}},\; 2\sinh t \right),\\[6pt]
B_{\beta^{*}}(t)&=-N_{\beta}(t).
\end{aligned}
\end{equation}

\textit{Curvature, torsion and angle of the pair:}
\begin{equation}
\dfrac{ds^{*}}{dt}=\dfrac12\tanh t\sqrt{1+\dfrac13\tanh^2t},\quad
\kappa_{\beta^{*}}=\dfrac{6\operatorname{sech}t}{\sinh t\,(3+\tanh^2t)^{3/2}},\quad
\tau_{\beta^{*}}=+\sqrt3\coth t,
\end{equation}
\begin{equation}
\cos\varphi_3=\langle T_\beta,T_{\beta^{*}}\rangle=\dfrac{\tanh t}{\sqrt{3+\tanh^2t}},\qquad
\sin\varphi_3=\dfrac{\sqrt3}{\sqrt{3+\tanh^2t}},\qquad \tan\varphi_3=-\dfrac{1}{\kg(t)}.
\end{equation}

\textit{Non-helix and non-slant nature:}

Using the explicit expressions for $\kappa_\beta$ and $\tau_\beta$ from \eqref{curvature and torsion type 3}, we compute
\begin{equation}
\kappa^{\smash[b]{T_{\beta}}}_{g}(t)=\frac{{\tau_\beta}(t)}{{\kappa_\beta}(t)} = -\frac{1}{\sqrt3}\tanh(t),
\end{equation}
which is clearly non-constant. By Lancret's theorem in $\mathbb{L}^3$ \cite[Theorem~2.12]{Lopez2014}, $\beta$ is not a helix.

There do not exist $\textbf{v}=(\bm{v}_{1},\bm{v}_{2},\bm{v}_{3})\in\mathbb{R}^3\setminus\{0\}$ and $\bm{d}\in\mathbb{R}$ such that $\langle N_{\beta}(t),\textbf{v}\rangle = \bm{d}$ for all $t$.

In fact, on the contrary,
\[
\langle N_{\beta}(t),\textbf{v}\rangle=\bm{d} \Longleftrightarrow
-\bm{v}_{1}\cdot e^{-t} + \bm{v}_{2}\cdot e^{2t} - \bm{v}_{3}\cdot2\cosh(t)\sqrt{1+e^{2t}} = \bm{d}\cdot\sqrt{3}\sqrt{1+e^{2t}}.
\]
Both sides are real analytic, so this identity holds for all $t\in\mathbb{R}$. Using $\sqrt{1+e^{2t}}=e^{t}+\tfrac12e^{-t}+O(e^{-3t})$ we get $2\cosh t\sqrt{1+e^{2t}}=e^{2t}+\tfrac32+O(e^{-2t})$. Letting $t\to+\infty$, the coefficient of $e^{2t}$ gives $\bm v_2=\bm v_3$, that of $e^{t}$ gives $\bm d=0$, and the constant term gives $-\tfrac32\bm v_3=0$; hence $\bm v_3=\bm v_2=0$ and then $\bm v_1=0$. This contradicts $\textbf{v}\neq0$. Hence, by definition, $\beta$ is not a slant helix.
\end{example}

\begin{example}[Type 4]

Same functions $\Phi$, $\Theta$, $\delta$ as in Type~2, now with $\lambda<0$ and $t\in(1,\sqrt3)$. The formulas for $\dfrac{ds}{dt}$ and $s(t)$, for $\kappa_\beta$ and $\tau_\beta$, for $\dfrac{d\beta}{dt}$, for $\beta$, for the frame $(T_\beta,N_\beta,B_\beta)$ and for $\beta^{*}=\beta+\lambda N_\beta$ are those of Type~2 (now $\dfrac{ds}{dt}>0$ because $\lambda<0$ and $1-t^2<0$). What changes is the following.

\textit{Auxiliary functions:}
\begin{equation}
\kg(t)=\dfrac{3-t^2}{1+t^2}\in(0,1),\qquad
t(s)=\cot\!\Big(\tfrac12\arcsin\dfrac{s}{4\lambda}\Big).
\end{equation}

\textit{Frenet frame of $\beta^{*}$ ($T_{\beta^{*}}$ spacelike):}
\begin{equation}
\begin{aligned}
T_{\beta^{*}}(t)&=\dfrac{\big(24t^3,\; 2t^6+18t^4-18t^2-9,\; 2t^6+18t^4-18t^2-27\big)}{9(1+t^2)\sqrt{8(t^2-1)}},\\[6pt]
N_{\beta^{*}}(t)&=\dfrac{\big(24t(t^2-3),\; 6t^8+20t^6+24t^4-63t^2+27,\; 6t^8+20t^6+24t^4-9t^2+9\big)}{9(1+t^2)^2\sqrt{8(t^2-1)}},\\[6pt]
B_{\beta^{*}}(t)&=-N_{\beta}(t).
\end{aligned}
\end{equation}

\textit{Curvature, torsion and angle of the pair:}
\begin{equation}
\dfrac{ds^{*}}{dt}=\dfrac{(-\lambda)(3-t^2)\sqrt{8(t^2-1)}}{(1+t^2)^2},\quad
\kappa_{\beta^{*}}=\dfrac{t(1+t^2)^2}{(-\lambda)(t^2-1)(3-t^2)\sqrt{8(t^2-1)}},\quad
\tau_{\beta^{*}}=\dfrac{1+t^2}{\lambda(3-t^2)},
\end{equation}
\begin{equation}
\sinh\varphi_4=\langle T_\beta,T_{\beta^{*}}\rangle=\dfrac{3-t^2}{\sqrt{8(t^2-1)}},\qquad
\cosh\varphi_4=\dfrac{1+t^2}{\sqrt{8(t^2-1)}},\qquad \tanh\varphi_4=\kg(t).
\end{equation}

\textit{Non-helix and non-slant nature:}

Using the explicit expressions for $\kappa_\beta$ and $\tau_\beta$ from \eqref{curvature and torsion type 2}, we compute
\begin{equation}
\kappa^{\smash[b]{T_{\beta}}}_{g}(t)=\frac{{\tau_\beta}(t)}{{\kappa_\beta}(t)} = \frac{3-t^2}{1+t^2},
\end{equation}
which is clearly non-constant. By Lancret's theorem in $\mathbb{L}^3$ \cite[Theorem~2.12]{Lopez2014}, $\beta$ is not a helix.

There do not exist $\textbf{v}=(\bm{v}_{1},\bm{v}_{2},\bm{v}_{3})\in\mathbb{R}^3\setminus\{0\}$ and $\bm{d}\in\mathbb{R}$ such that $\langle N_{\beta}(t),\textbf{v}\rangle = \bm{d}$ for all $t$.

In fact, on the contrary,
\[
\langle N_{\beta}(t),\textbf{v}\rangle=\bm{d} \Longleftrightarrow
3\bm{v}_{1}\cdot (3-6t^2-t^4) + \bm{v}_{2}\cdot t\,(2t^6+6t^4+6t^2+27) - \bm{v}_{3}\cdot t\,(2t^6+6t^4+6t^2+9) = \bm{d}\cdot9(1+t^2)^2.
\]
Both sides are polynomials in $t$ that agree on an interval, hence they coincide as polynomials. The coefficients of $t^7$ and $t$ give $\bm v_2=\bm v_3$ and $27\bm v_2=9\bm v_3$, so $\bm v_2=\bm v_3=0$; the coefficients of $t^4$ and $t^2$ give $-3\bm v_1=9\bm d$ and $-18\bm v_1=18\bm d$, so $\bm v_1=\bm d=0$. This contradicts $\textbf{v}\neq0$. Hence, by definition, $\beta$ is not a slant helix.
\end{example}

\begin{example}[Type 5]

Same functions $\Phi$, $\Theta$, $\delta$ and same frame $(T_\beta,N_\beta,B_\beta)$ as in Type~1, now with $\lambda=-1$ and $t\in(\operatorname{arccosh}2,+\infty)$. The coordinates of $\beta$ and of $\beta^{*}$ are those of Type~1 with an overall minus sign. What changes is the following.

\textit{Auxiliary functions:}
\begin{equation}
\kg(t)=\dfrac{2}{\sqrt3}\tanh t>1,\qquad
\dfrac{ds}{dt}=\dfrac{1-4\operatorname{sech}^2 t}{\sqrt3}\quad \text{or}\quad s(t)=\dfrac{1}{\sqrt3}\big(t-4\tanh t\big).
\end{equation}

\textit{Curvature and torsion:}
\begin{equation}\label{curvature and torsion type 5}
\kappa_{\beta}(t)=\dfrac{3}{1-4\operatorname{sech}^2 t},\qquad
\tau_{\beta}(t)=\dfrac{2\sqrt3\,\tanh t}{1-4\operatorname{sech}^2 t}.
\end{equation}

\textit{Tangent vector:}
\begin{equation}
\dfrac{d\beta}{dt}(t) = \dfrac{1-4\operatorname{sech}^2 t}{\sqrt3}\left((1-2\operatorname{sech}^{2}t)\cosh t,\ -2\tanh t,\ \sinh t\right)
\end{equation}

\textit{Frenet frame of $\beta^{*}$ ($N_{\beta^{*}}$ timelike):}
\begin{equation}
\begin{aligned}
T_{\beta^{*}}(t)&=\dfrac{1}{\sqrt{1-4\operatorname{sech}^2t}}\left( -2\operatorname{sech}t\tanh t,\; 2\operatorname{sech}^2t-1,\; -2\operatorname{sech}t \right),\\[6pt]
N_{\beta^{*}}(t)&=\dfrac{1}{\sqrt3\sqrt{1-4\operatorname{sech}^2t}}\left( \cosh t-2\operatorname{sech}t+4\operatorname{sech}^3t,\; 4\operatorname{sech}^2t\tanh t,\; \sinh t \right),\\[6pt]
B_{\beta^{*}}(t)&=-N_{\beta}(t).
\end{aligned}
\end{equation}

\textit{Curvature, torsion and angle of the pair:}
\begin{equation}
\dfrac{ds^{*}}{dt}=\dfrac{2}{\sqrt3}\tanh t\sqrt{1-4\operatorname{sech}^2t},\quad
\kappa_{\beta^{*}}=\dfrac{3\operatorname{sech}t}{\sinh t\,(1-4\operatorname{sech}^2t)^{3/2}},\quad
\tau_{\beta^{*}}=-\dfrac{\sqrt3}{2}\coth t,
\end{equation}
\begin{equation}
\cosh\varphi_5=\langle T_\beta,T_{\beta^{*}}\rangle=\dfrac{2\tanh t}{\sqrt{1-4\operatorname{sech}^2t}},\qquad
\sinh\varphi_5=\dfrac{\sqrt3}{\sqrt{1-4\operatorname{sech}^2t}},\qquad \tanh\varphi_5=\dfrac{1}{\kg(t)}.
\end{equation}

\textit{Non-helix and non-slant nature:}

Using the explicit expressions for $\kappa_\beta$ and $\tau_\beta$ from \eqref{curvature and torsion type 5}, we compute
\begin{equation}
\kappa^{\smash[b]{T_{\beta}}}_{g}(t)=\frac{{\tau_\beta}(t)}{{\kappa_\beta}(t)} = \frac{2}{\sqrt3}\tanh(t),
\end{equation}
which is clearly non-constant. By Lancret's theorem in $\mathbb{L}^3$ \cite[Theorem~2.12]{Lopez2014}, $\beta$ is not a helix.

There do not exist $\textbf{v}=(\bm{v}_{1},\bm{v}_{2},\bm{v}_{3})\in\mathbb{R}^3\setminus\{0\}$ and $\bm{d}\in\mathbb{R}$ such that $\langle N_{\beta}(t),\textbf{v}\rangle = \bm{d}$ for all $t$.

In fact, on the contrary,
\[
\langle N_{\beta}(t),\textbf{v}\rangle=\bm{d} \Longleftrightarrow
\bm{v}_{1}\cdot \big(4\operatorname{sech}(t)\tanh(t)+(1-2\operatorname{sech}^2(t))\sinh(t)\big) - \bm{v}_{2}\cdot2\operatorname{sech}^2(t) - \bm{v}_{3}\cdot \cosh(t) = \bm{d}\cdot\sqrt{3}.
\]
Both sides are real analytic, so this identity holds for all $t\in\mathbb{R}$. Since $4\operatorname{sech}t\tanh t+(1-2\operatorname{sech}^2t)\sinh t=\sinh t+2\operatorname{sech}t\tanh t$, letting $t\to+\infty$ the terms in $e^{t}$ force $\bm v_3=\bm v_1$; then the left-hand side tends to $0$, so $\bm d=0$; the coefficient of $e^{-t}$ is then $3\bm v_1$, so $\bm v_1=\bm v_3=0$, and finally $\bm v_2=0$. This contradicts $\textbf{v}\neq0$. Hence, by definition, $\beta$ is not a slant helix.
\end{example}

\subsection{Illustrations}\label{subsec:illustrations}

\textit{Illustration:}
\begin{figure}[htbp]
\centering
\includegraphics[width=1.0\textwidth]{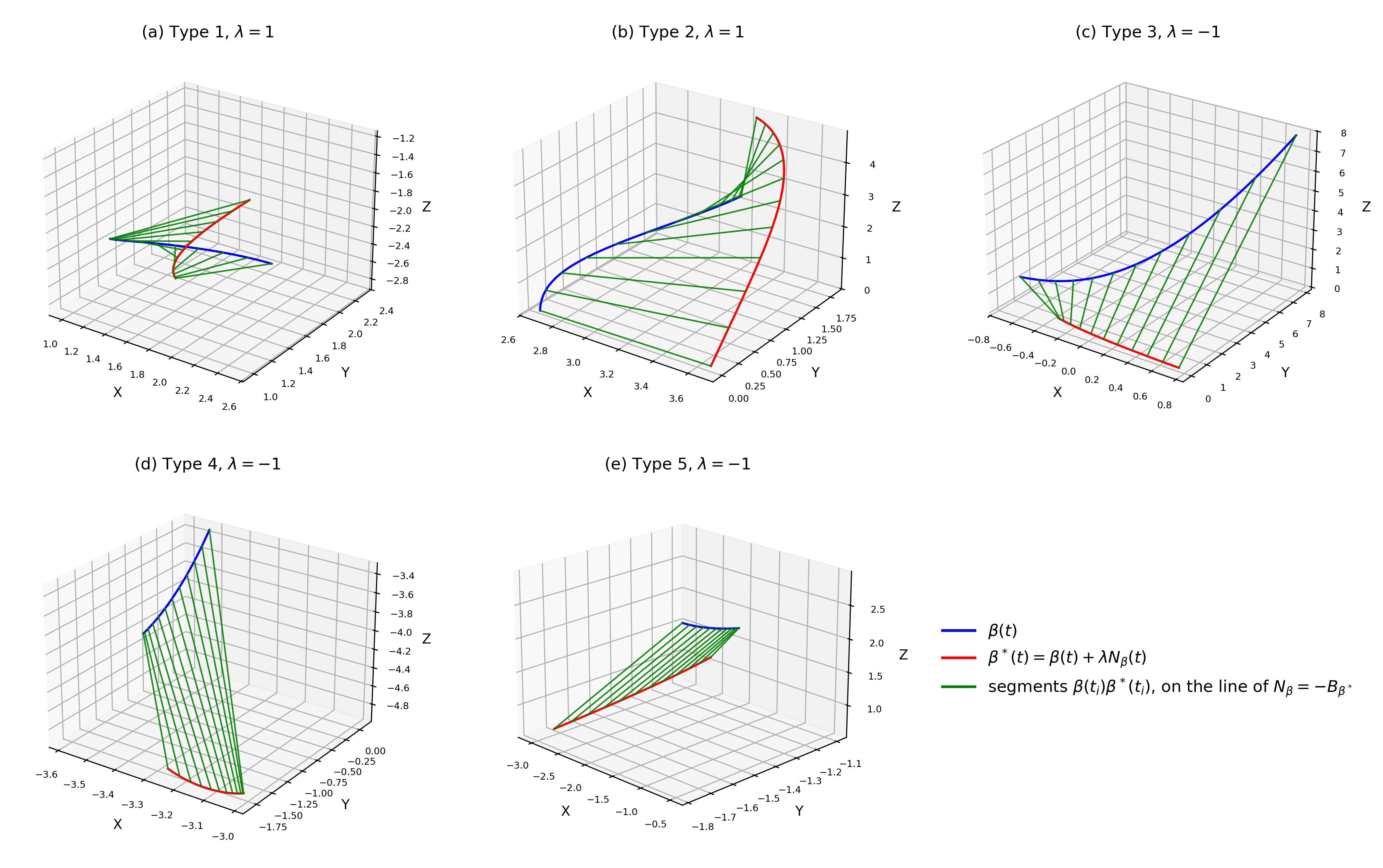}
\caption{Mannheim pairs of Types~1--5 in $\mathbb{L}^3$: the curve $\beta$ (blue), its partner $\beta^*=\beta+\lambda N_\beta$ (red) and the segments joining $\beta(t_i)$ to $\beta^*(t_i)$ (green), which lie on the lines of $N_\beta=-B_{\beta^*}$. (a)~Type~1, $\lambda=1$; (b)~Type~2, $\lambda=1$; (c)~Type~3, $\lambda=-1$; (d)~Type~4, $\lambda=-1$; (e)~Type~5, $\lambda=-1$.}
\label{fig:L3pairs}
\end{figure}

Figure~\ref{fig:L3pairs} shows the Mannheim pairs of the examples of Types~1--5. In each panel, the green segments join $\beta(t_i)$ to $\beta^*(t_i)$ for twelve equally spaced values $t_i$; since $\beta^*=\beta+\lambda N_\beta$, each of them lies on the line of $N_\beta=-B_{\beta^*}$, as in the definition of Mannheim pair. We used $\lambda=1$ in Types~1 and~2 and $\lambda=-1$ in Types~3, 4 and~5 (in Types~2 and~4 any $\lambda>0$, respectively $\lambda<0$, would do). In all types the parameter intervals stay slightly away from the endpoints of the domains, where $\beta$ or $\beta^*$ stops being a regular curve ($\frac{ds}{dt}=0$ or $\frac{ds^*}{dt}=0$) or $\kappa_{\beta^*}$ vanishes; in Types~3 and~5, whose domains are unbounded, we also cut the interval on the right (at $t=2.6$ and $t=2.1$), because the coordinates grow like $\cosh t$.

The drawings use the Euclidean structure of $\mathbb{R}^3$ only to represent the curves (axes not to scale), so the angles seen in the figure are not the Lorentzian ones. For instance, since $\langle N_\beta,N_\beta\rangle=\pm1$, each green segment has Lorentzian norm $\sqrt{|\langle\beta^*-\beta,\beta^*-\beta\rangle|}=|\lambda|$ (it is spacelike in Types~1, 2, 4, 5 and timelike in Type~3), but its Euclidean length varies along the curve.

\section{The Mannheim pair condition}\label{sec:mannheim_partner}

Our goal is to choose a convenient parameter $x$ and find functions $\Theta^{*}(x)$ and $\Phi^{*}(x)$ such that $\beta^{*}(x)=\alpha^{*}(s^{*}(x))$ satisfies the Mannheim pair condition and the resulting integrals are elementary.

As in Section~\ref{sec:angular}, $\alpha^{*}$ falls in one of three cases, according to which vector of its Frenet frame is timelike. In each of them the Mannheim pair equation \eqref{eq mannheim partner}, written with respect to the arc length $s^{*}$, takes a different form:
\begin{align}
\text{Case 1 } (\varepsilon_{1*}=-1,\ \varepsilon_{2*}=1):\quad & \frac{d\tau_{\alpha^{*}}}{ds^{*}}=\frac{\kappa_{\alpha^{*}}}{\lambda}\big(1-\lambda^2\tau_{\alpha^{*}}^2\big),\label{eq:pair_case1}\\
\text{Case 2 } (\varepsilon_{1*}=1,\ \varepsilon_{2*}=-1):\quad & \frac{d\tau_{\alpha^{*}}}{ds^{*}}=-\frac{\kappa_{\alpha^{*}}}{\lambda}\big(1-\lambda^2\tau_{\alpha^{*}}^2\big),\label{eq:pair_case2}\\
\text{Case 3 } (\varepsilon_{1*}=1,\ \varepsilon_{2*}=1):\quad & \frac{d\tau_{\alpha^{*}}}{ds^{*}}=\frac{\kappa_{\alpha^{*}}}{\lambda}\big(1+\lambda^2\tau_{\alpha^{*}}^2\big).\label{eq:pair_case3}
\end{align}
By Corollary~\ref{corolario par s e par s*}, the five types of Mannheim pairs correspond to these cases as follows:
\begin{center}
\begin{tabular}{c|c|c|c}
Case & $(T_{\alpha^*},N_{\alpha^*},B_{\alpha^*})$ & $|\lambda\tau_{\alpha^*}|<1$ & $|\lambda\tau_{\alpha^*}|>1$\\ \hline
1 & timelike, spacelike, spacelike & Type 2 & Type 1\\
2 & spacelike, timelike, spacelike & Type 5 & Type 4\\
3 & spacelike, spacelike, timelike & Type 3 & ---
\end{tabular}
\end{center}
In all cases we assume $\dfrac{ds^{*}}{dx}>0$ and ${\Phi^{*}}'(x)>0$ (the case ${\Phi^{*}}'<0$ is analogous); primes denote derivatives with respect to $x$.

\subsection{Case \texorpdfstring{$T_{\alpha^{*}}$}{T*} timelike}

Suppose $\alpha^{*}$ is not a helix with timelike axis, $T_{\alpha^{*}}$ is future pointing and $T_{\alpha^{*}}\neq\pm e_3$, so that $\Phi^{*}\in(-\frac\pi2,\frac\pi2)\setminus\{0\}$. Define an auxiliary angle $\delta^{*}(x)\in(0,\pi)$ by
\begin{equation}\label{eq:cot_delta_1}
\cot\delta^{*}(x) = \dfrac{{\Theta^{*}}'(x)\sin\Phi^{*}(x)}{{\Phi^{*}}'(x)}.
\end{equation}
Then $\sqrt{{\Theta^{*}}'^2\tan^2\Phi^{*} + {\Phi^{*}}'^2\sec^2\Phi^{*}} = {\Phi^{*}}'\sec\Phi^{*}\csc\delta^{*}$ and
\begin{equation}
\left[ \arctan\Big( \dfrac{{\Phi^{*}}'}{{\Theta^{*}}'\sin\Phi^{*}} \Big)\right]' = {\delta^{*}}'
\end{equation}
(where ${\Theta^{*}}'\neq0$; at the isolated points where ${\Theta^{*}}'=0$ it is understood by continuity). Thus, by \eqref{eq:kappa_x_1} and \eqref{eq:tau_x_1},
\begin{equation}\label{eq:kt_case1}
\kappa_{\alpha^{*}}=\dfrac{{\Phi^{*}}'\sec\Phi^{*}\csc\delta^{*}}{\dfrac{ds^{*}}{dx}},\qquad
\tau_{\alpha^{*}}=\dfrac{\Psi}{\dfrac{ds^{*}}{dx}},\qquad
\Psi:={\Phi^{*}}'\sec\Phi^{*}\csc\Phi^{*}\cot\delta^{*}-{\delta^{*}}'.
\end{equation}
Multiplying \eqref{eq:pair_case1} by $\dfrac{ds^{*}}{dx}$ and using $\kappa_{\alpha^{*}}\dfrac{ds^{*}}{dx}={\Phi^{*}}'\sec\Phi^{*}\csc\delta^{*}$, we get
\[
\frac{\lambda\,\tau_{\alpha^{*}}'}{1-\lambda^2\tau_{\alpha^{*}}^2}={\Phi^{*}}'\sec\Phi^{*}\csc\delta^{*}.
\]
Since $\operatorname{arctanh}$ and $\operatorname{arccoth}$ have the same derivative $\frac1{1-u^2}$, integrating gives
\begin{equation}\label{eq:lamtau_case1}
\lambda\tau_{\alpha^{*}}=\tanh I\ \ (|\lambda\tau_{\alpha^{*}}|<1)\qquad\text{or}\qquad \lambda\tau_{\alpha^{*}}=\coth I\ \ (|\lambda\tau_{\alpha^{*}}|>1),
\end{equation}
where $I(x)=\int{\Phi^{*}}'\sec\Phi^{*}\csc\delta^{*}\,dx+n$. Solving $\tau_{\alpha^{*}}=\Psi/\frac{ds^{*}}{dx}$ for $\frac{ds^{*}}{dx}$,
\begin{equation}\label{eq:ds_case1}
\dfrac{ds^{*}}{dx}=\lambda\coth(I)\,\Psi\quad(|\lambda\tau_{\alpha^{*}}|<1),\qquad
\dfrac{ds^{*}}{dx}=\lambda\tanh(I)\,\Psi\quad(|\lambda\tau_{\alpha^{*}}|>1).
\end{equation}
Finally, $\Theta^{*}$ is recovered from \eqref{eq:cot_delta_1}:
\[
\Theta^{*}(x) = \int {\Phi^{*}}'(x)\csc\Phi^{*}(x)\cot\delta^{*}(x) \, dx+m.
\]
Here $\lambda>0$, because the Mannheim curve $\alpha=\alpha^{*}+\lambda B_{\alpha^{*}}$ has $\kappa_\alpha>0$ (Corollary~\ref{corolario par s e par s*}).

\subsection{Cases \texorpdfstring{$N_{\alpha^{*}}$}{N*} and \texorpdfstring{$B_{\alpha^{*}}$}{B*} timelike}

The computation is the same, using Cases~2 and~3 of Proposition~\ref{thm:arbitrary_param} and the corresponding equation \eqref{eq:pair_case2} or \eqref{eq:pair_case3}.

\emph{Case $N_{\alpha^{*}}$ timelike.} Here ${\Phi^{*}}'^2>{\Theta^{*}}'^2\cosh^2\Phi^{*}$, so we may define $\delta^{*}(x)\in\mathbb{R}$ by
\[
\tanh\delta^{*}=\frac{{\Theta^{*}}'\cosh\Phi^{*}}{{\Phi^{*}}'}.
\]
Then
\begin{gather*}
\sqrt{{\Phi^{*}}'^2-{\Theta^{*}}'^2\cosh^2\Phi^{*}}={\Phi^{*}}'\operatorname{sech}\delta^{*},\qquad
{\Theta^{*}}'\sinh\Phi^{*}={\Phi^{*}}'\tanh\Phi^{*}\tanh\delta^{*},\\
\Big[\operatorname{arctanh}\Big(\frac{{\Theta^{*}}'\cosh\Phi^{*}}{{\Phi^{*}}'}\Big)\Big]'={\delta^{*}}',
\end{gather*}
so by \eqref{eq:kappa_x_2} and \eqref{eq:tau_x_2},
\[
\kappa_{\alpha^{*}}\dfrac{ds^{*}}{dx}={\Phi^{*}}'\operatorname{sech}\delta^{*},\qquad
\tau_{\alpha^{*}}\dfrac{ds^{*}}{dx}=-\Psi,\qquad \Psi:={\Phi^{*}}'\tanh\Phi^{*}\tanh\delta^{*}+{\delta^{*}}'.
\]
Multiplying \eqref{eq:pair_case2} by $\frac{ds^{*}}{dx}$ gives $\dfrac{-\lambda\tau_{\alpha^{*}}'}{1-\lambda^2\tau_{\alpha^{*}}^2}={\Phi^{*}}'\operatorname{sech}\delta^{*}$. Hence, with $I=\int{\Phi^{*}}'\operatorname{sech}\delta^{*}\,dx+n$,
\[
\lambda\tau_{\alpha^{*}}=-\tanh I\ \ \text{or}\ \ -\coth I,\qquad
\frac{ds^{*}}{dx}=\lambda\coth(I)\,\Psi\ \ \text{or}\ \ \lambda\tanh(I)\,\Psi,\ \text{respectively},
\]
$\Theta^{*}=\int{\Phi^{*}}'\operatorname{sech}\Phi^{*}\tanh\delta^{*}\,dx+m$, and $\lambda<0$.

\emph{Case $B_{\alpha^{*}}$ timelike.} Here ${\Theta^{*}}'^2\cosh^2\Phi^{*}>{\Phi^{*}}'^2$, so ${\Theta^{*}}'$ has constant sign; assume ${\Theta^{*}}'>0$ (the other case is analogous) and define $\delta^{*}(x)>0$ by
\[
\coth\delta^{*}=\frac{{\Theta^{*}}'\cosh\Phi^{*}}{{\Phi^{*}}'}.
\]
Then
\begin{gather*}
\sqrt{{\Theta^{*}}'^2\cosh^2\Phi^{*}-{\Phi^{*}}'^2}={\Phi^{*}}'\operatorname{csch}\delta^{*},\qquad
{\Theta^{*}}'\sinh\Phi^{*}={\Phi^{*}}'\tanh\Phi^{*}\coth\delta^{*},\\
\Big[\operatorname{arctanh}\Big(\frac{{\Phi^{*}}'}{{\Theta^{*}}'\cosh\Phi^{*}}\Big)\Big]'={\delta^{*}}',
\end{gather*}
so by \eqref{eq:kappa_x_3} and \eqref{eq:tau_x_3},
\[
\kappa_{\alpha^{*}}\dfrac{ds^{*}}{dx}={\Phi^{*}}'\operatorname{csch}\delta^{*},\qquad
\tau_{\alpha^{*}}\dfrac{ds^{*}}{dx}=\Psi,\qquad \Psi:={\Phi^{*}}'\tanh\Phi^{*}\coth\delta^{*}+{\delta^{*}}'.
\]
Multiplying \eqref{eq:pair_case3} by $\frac{ds^{*}}{dx}$ gives $\dfrac{\lambda\tau_{\alpha^{*}}'}{1+\lambda^2\tau_{\alpha^{*}}^2}={\Phi^{*}}'\operatorname{csch}\delta^{*}$. Hence, with $I=\int{\Phi^{*}}'\operatorname{csch}\delta^{*}\,dx+n$,
\[
\lambda\tau_{\alpha^{*}}=\tan I,\qquad \frac{ds^{*}}{dx}=\lambda\cot(I)\,\Psi,
\]
$\Theta^{*}=\int{\Phi^{*}}'\operatorname{sech}\Phi^{*}\coth\delta^{*}\,dx+m$, and $\lambda<0$.

\subsection{The general parametrization}

Thus, we can state the following theorem, which summarizes the general parametrization of a Mannheim partner curve obtained by this method.

\begin{theorem}\label{thm:mannheim_partner}
Let $m$, $n$ and $\lambda\neq0$ be real constants, and let $\Phi^{*}(x)$ and $\delta^{*}(x)$ be differentiable functions with ${\Phi^{*}}'(x)>0$. Define $\Theta^{*}$, $\Psi$ and $I$ by one of the following items:
\begin{enumerate}
\item[(1)] ($\lambda>0$, $\Phi^{*}\in(-\frac\pi2,\frac\pi2)\setminus\{0\}$, $\delta^{*}\in(0,\pi)$)
\begin{gather*}
\Theta^{*}=\int{\Phi^{*}}'\csc\Phi^{*}\cot\delta^{*}\,dx+m,\qquad
\Psi={\Phi^{*}}'\sec\Phi^{*}\csc\Phi^{*}\cot\delta^{*}-{\delta^{*}}',\\
I=\int{\Phi^{*}}'\sec\Phi^{*}\csc\delta^{*}\,dx+n,\qquad
\kappa_{\beta^{*}}\dfrac{ds^{*}}{dx}={\Phi^{*}}'\sec\Phi^{*}\csc\delta^{*},\qquad
\tau_{\beta^{*}}\dfrac{ds^{*}}{dx}=\Psi;
\end{gather*}
\item[(2)] ($\lambda<0$, $\delta^{*}\in\mathbb{R}$)
\begin{gather*}
\Theta^{*}=\int{\Phi^{*}}'\operatorname{sech}\Phi^{*}\tanh\delta^{*}\,dx+m,\qquad
\Psi={\Phi^{*}}'\tanh\Phi^{*}\tanh\delta^{*}+{\delta^{*}}',\\
I=\int{\Phi^{*}}'\operatorname{sech}\delta^{*}\,dx+n,\qquad
\kappa_{\beta^{*}}\dfrac{ds^{*}}{dx}={\Phi^{*}}'\operatorname{sech}\delta^{*},\qquad
\tau_{\beta^{*}}\dfrac{ds^{*}}{dx}=-\Psi;
\end{gather*}
\item[(3)] ($\lambda<0$, $\delta^{*}>0$)
\begin{gather*}
\Theta^{*}=\int{\Phi^{*}}'\operatorname{sech}\Phi^{*}\coth\delta^{*}\,dx+m,\qquad
\Psi={\Phi^{*}}'\tanh\Phi^{*}\coth\delta^{*}+{\delta^{*}}',\\
I=\int{\Phi^{*}}'\operatorname{csch}\delta^{*}\,dx+n,\qquad
\kappa_{\beta^{*}}\dfrac{ds^{*}}{dx}={\Phi^{*}}'\operatorname{csch}\delta^{*},\qquad
\tau_{\beta^{*}}\dfrac{ds^{*}}{dx}=\Psi.
\end{gather*}
\end{enumerate}
Let $\dfrac{ds^{*}}{dx}$ be given by
\begin{equation}\label{ds* do par}
\begin{array}{lll}
\text{(1)}\ \ \lambda\coth(I)\,\Psi\ \text{ (Type 2)}\quad\text{or}\quad \lambda\tanh(I)\,\Psi\ \text{ (Type 1)}, \\[4pt]
\text{(2)}\ \ \lambda\coth(I)\,\Psi\ \text{ (Type 5)}\quad\text{or}\quad \lambda\tanh(I)\,\Psi\ \text{ (Type 4)}, \\[4pt]
\text{(3)}\ \ \lambda\cot(I)\,\Psi\ \text{ (Type 3)},
\end{array}
\end{equation}
and assume that it is positive (which requires $I\neq0$, and $I\notin\frac\pi2\mathbb{Z}$ in item~(3)). Then the curve $\beta^{*}$ parametrized by $x$ as
\begin{equation}\label{alpha* do par}
\frac{d\beta^{*}}{dx}(x)=\frac{ds^{*}}{dx}\,T_{\beta^{*}}(x),
\end{equation}
where $T_{\beta^{*}}$ is given by \eqref{eq:T_x_1}, \eqref{eq:T_x_2} or \eqref{eq:T_x_3} (items (1), (2), (3), respectively) with $(\Theta^{*},\Phi^{*})$, is a Mannheim partner curve with the stated curvature and torsion, and $\lambda\tau_{\beta^{*}}=\tanh I$ or $\coth I$ in~(1), $-\tanh I$ or $-\coth I$ in~(2), and $\tan I$ in~(3), according to the choice in \eqref{ds* do par}. Its Mannheim curve is $\beta=\beta^{*}+\lambda B_{\beta^{*}}$.

Conversely, let $\alpha^{*}$ be a Mannheim partner curve with $\kappa_{\alpha^{*}}>0$. Then, near each point, there are a coordinate system and functions $\delta^{*}$, $\Phi^{*}$ with ${\Phi^{*}}'>0$ (the case ${\Phi^{*}}'<0$ being analogous) such that the above relations hold.
\end{theorem}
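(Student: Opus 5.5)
The plan is to follow the same four-step scheme as in the proof of Theorem~\ref{thm:general_mannheim}, with Proposition~\ref{prop mannheim x geral}(2) in place of the Mannheim condition on $\alpha$. For the direct part, fix one of the items (1), (2), (3). Define $\Theta^*$ by the stated integral and $\frac{ds^*}{dx}$ by \eqref{ds* do par}, and let $\beta^*$ be given by \eqref{alpha* do par}. Since $T_{\beta^*}$ is a unit vector of the prescribed causal character, $s^*$ is the arc length of $\beta^*$, $T_{\beta^*}$ is its tangent, and $(\Theta^*,\Phi^*)$ are its angular coordinates. Differentiating the integral for $\Theta^*$ recovers the defining relation for $\delta^*$: $\cot\delta^*$, $\tanh\delta^*$ or $\coth\delta^*$, respectively. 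The computations already carried out in the two preceding subsections then express $\sqrt{|\langle T',T'\rangle|}$ and the bracketed derivative in \eqref{eq:tau_x_1}, \eqref{eq:tau_x_2}, \eqref{eq:tau_x_3} in terms of $\delta^*$. Together with \eqref{eq:kappa_x_1}--\eqref{eq:kappa_x_3}, this gives $\kappa_{\beta^*}\frac{ds^*}{dx}$ and $\tau_{\beta^*}\frac{ds^*}{dx}=\pm\Psi$ as stated. The sign of $\langle T_{\beta^*}',T_{\beta^*}'\rangle$, which is $(\Phi^*{}'\sec\Phi^*\csc\delta^*)^2>0$, $-(\Phi^*{}'\operatorname{sech}\delta^*)^2<0$, or $(\Phi^*{}'\operatorname{csch}\delta^*)^2>0$, fixes the causal character of $N_{\beta^*}$. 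Hence each item falls into Case~1, 2 or 3 of \eqref{eq:pair_case1}--\eqref{eq:pair_case3}.

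Next I substitute $\frac{ds^*}{dx}$ from \eqref{ds* do par} into $\tau_{\beta^*}\frac{ds^*}{dx}=\pm\Psi$. This should give $\lambda\tau_{\beta^*}=\tanh I$ or $\coth I$ in (1), $-\tanh I$ or $-\coth I$ in (2), and $\tan I$ in (3). For instance, in (1) with $\frac{ds^*}{dx}=\lambda\coth(I)\Psi$ one gets $\tau_{\beta^*}=1/(\lambda\coth I)$. Because $I'$ equals $\kappa_{\beta^*}\frac{ds^*}{dx}$ by construction, differentiating gives
\[
\lambda\frac{d\tau_{\beta^*}}{dx}=(1\mp\lambda^2\tau_{\beta^*}^2)\,I' \quad\text{(or $1+\lambda^2\tau_{\beta^*}^2$ in (3), and with the sign change in (2))},
\]
which is exactly \eqref{eq 2 mannheim pair em x} with the appropriate $\varepsilon_{1*},\varepsilon_{2*}$. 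Proposition~\ref{prop mannheim x geral}(2) then shows that $\beta^*$ is a Mannheim partner. The reciprocal part of Theorem~\ref{theorem caracterization mannheim pair} gives the Mannheim curve $\beta=\beta^*+\lambda B_{\beta^*}$. Finally, the table preceding the theorem assigns the types according to whether $|\lambda\tau_{\beta^*}|$ is less than or greater than $1$, that is, according to the $\tanh$ versus $\coth$ choice.

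The step I expect to be the main obstacle is verifying the side hypotheses of Proposition~\ref{prop mannheim x geral}(2) and Remark~\ref{rem:reciprocal_hyp}, namely $\lambda\varepsilon_{1*}<0$, $\varepsilon_{1*}+\lambda^2\varepsilon_{2*}\tau^2\neq0$ and $\tau_{\beta^*}\neq0$. Here $\tau\neq0$ follows from $I\neq0$ and $I\notin\frac\pi2\mathbb Z$. The sign conditions on $\lambda$ are exactly those imposed in each item. The nonvanishing of $\varepsilon_{1*}+\lambda^2\varepsilon_{2*}\tau^2$ holds because $|\tanh I|\neq1$, $|\coth I|\neq1$, and $1+\tan^2 I>0$. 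I also need positivity of $\kappa_{\beta^*}$, which follows since $\Phi^*{}'>0$ and the trigonometric or hyperbolic factors are positive on the stated ranges of $\delta^*$ and $\Phi^*$.

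For the converse, take a Mannheim partner $\alpha^*$ with $\kappa_{\alpha^*}>0$. Near a point, choose a proper orthochronous coordinate system in which $T_{\alpha^*}\neq\pm e_3$ and $\Phi^*{}'\neq0$ (after reflection, $\Phi^*{}'>0$). Define $\delta^*$ by the corresponding relation; it is well defined by the causal character of $N_{\alpha^*}$, exactly as in the converse of Theorem~\ref{thm:general_mannheim}. Then run the computations of the preceding subsections backwards. Equation \eqref{eq mannheim partner} integrates to $\lambda\tau=\tanh I$, $\coth I$, or $\tan I$ for a suitable constant $n$, and solving $\tau\frac{ds^*}{dx}=\pm\Psi$ yields \eqref{ds* do par}.
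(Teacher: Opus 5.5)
Your proposal is correct and follows the paper's argument. You compute $\kappa_{\beta^*}\frac{ds^*}{dx}$ and $\tau_{\beta^*}\frac{ds^*}{dx}$ from Proposition~\ref{thm:arbitrary_param} via $\delta^*$, use the prescribed $\frac{ds^*}{dx}$ to get $\lambda\tau_{\beta^*}=\pm\tanh I$, $\pm\coth I$ or $\tan I$, differentiate (with $I'=\kappa_{\beta^*}\frac{ds^*}{dx}$) to recover the partner equation, and conclude with the reciprocal part of Theorem~\ref{theorem caracterization mannheim pair}, the converse being the same computations run backwards. The only differences are cosmetic: you pass through the generic-parameter form of Proposition~\ref{prop mannheim x geral}(2) rather than the arc-length equations \eqref{eq:pair_case1}--\eqref{eq:pair_case3}, and you explicitly verify the side hypotheses of Remark~\ref{rem:reciprocal_hyp} ($\lambda\varepsilon_{1*}<0$, $\tau\neq0$, $\varepsilon_{1*}+\lambda^2\varepsilon_{2*}\tau^2\neq0$, $\kappa>0$), which the paper leaves implicit.
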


\begin{proof}
Let $\delta^{*}$, $\Phi^{*}$, $\Theta^{*}$ and $\frac{ds^{*}}{dx}$ be as in the statement. By Proposition~\ref{thm:arbitrary_param} and the definition of $\Psi$, the curvature and torsion of $\beta^{*}$ are those of \eqref{eq:kt_case1} and of the corresponding formulas in the other cases. By construction, $\tau_{\beta^{*}}$ satisfies $\lambda\tau_{\beta^{*}}=\tanh I$, $\coth I$, $-\tanh I$, $-\coth I$ or $\tan I$, so that $\kappa_{\beta^{*}}\frac{ds^{*}}{dx}$ equals $\frac{d}{dx}$ of $\operatorname{arctanh}(\lambda\tau_{\beta^{*}})$ (resp.\ $\operatorname{arccoth}$, $\operatorname{arctan}$, with the sign of the case), which is the Mannheim pair equation \eqref{eq:pair_case1}, \eqref{eq:pair_case2} or \eqref{eq:pair_case3}. By the reciprocal part of Theorem~\ref{theorem caracterization mannheim pair}, $\beta^{*}$ is a Mannheim partner curve, whose Mannheim curve is $\beta=\beta^{*}+\lambda B_{\beta^{*}}$. Conversely, if $\alpha^{*}$ is a Mannheim partner curve with $\kappa_{\alpha^{*}}>0$, then $T_{\alpha^{*}}'\neq0$, so there is a timelike axis $U$ with $\langle T_{\alpha^{*}}',U\rangle\neq0$ at a given point; taking $e_3=U$ we get ${\Phi^{*}}'\neq0$ near that point. Define $\delta^{*}$ by the corresponding relation of the cases above; the computations of this section show that \eqref{eq mannheim partner} is equivalent to the relations of the statement.
\end{proof}

\begin{remark}
We can make one choice: $\Phi^{*}(x)=x$ ($x$ as the angular parameter), and establish the corresponding forms of the equations of a Mannheim partner curve given by this theorem by the function $\delta^{*}(x)$. Some suggestions to obtain other forms of the equations (according to convenience) are to take one of the following possibilities: $\Theta^{*}(x)=x$, $s^{*}(x)=x$, $s^{*}(x)={\kappa_{\alpha^{*}}}^{-1}(x)$, $s^{*}(x)={\tau_{\alpha^{*}}}^{-1}(x)$, $s^{*}(x)={\sigma_{0}}^{-1}(x)$, $s^{*}(x)={\sigma_{1}}^{-1}(x)$, where $\sigma_{0}=\tau_{\alpha^{*}}/\kappa_{\alpha^{*}}$ and $\sigma_{1}$ is the function of Ali and L\'opez \cite{AliLopez} (see Proposition~\ref{prop:slant_helix_L3}). A choice that makes the integral $I$ elementary is to take $\delta^{*}$ such that the integrand is $1$, i.e.\ $I(x)=x+n$: $\sin\delta^{*}={\Phi^{*}}'\sec\Phi^{*}$ in item~(1), $\cosh\delta^{*}={\Phi^{*}}'$ in item~(2) and $\sinh\delta^{*}={\Phi^{*}}'$ in item~(3). This choice requires ${\Phi^{*}}'\le\cos\Phi^{*}$ in item~(1) (so it is not compatible with $\Phi^{*}(x)=x$ there) and ${\Phi^{*}}'\ge1$ in item~(2); it is the one made in the example below.
\end{remark}

\begin{example}[Type 2]\label{ex:partner_type2}
We apply item~(1) of Theorem~\ref{thm:mannheim_partner} with $\lambda=1$, $m=0$, $n=0$ and $\lambda\tau_{\beta^{*}}=\tanh I$.

\textit{Basic functions} ($x\in(0,+\infty)$):
\begin{equation}
\begin{aligned}
\Phi^{*}(x)&=\arccos\!\Big(\dfrac{8}{\cosh x+9}\Big),\\
\Theta^{*}(x)&=3\arctan\!\Big(\sinh\dfrac{x}{2}\Big)-\arctan\!\Big(\dfrac13\sinh\dfrac{x}{2}\Big),\\
\delta^{*}(x)&=\operatorname{arccot}\!\Big(3\operatorname{csch}\dfrac{x}{2}\Big).
\end{aligned}
\end{equation}

\textit{Auxiliary functions:}
\begin{equation}
I(x)=x,\qquad
\Psi(x)=\dfrac32\operatorname{sech}\dfrac x2,\qquad
\dfrac{ds^{*}}{dx}=\dfrac32\coth x\,\operatorname{sech}\dfrac x2 .
\end{equation}
The domain $x>0$ is needed because $I=x$ must not vanish: as $x\to0^{+}$, $\frac{ds^{*}}{dx}\to\infty$ and $\kappa_{\beta^{*}}\to0$.

\textit{Curvature and torsion:}
\begin{equation}
\kappa_{\beta^{*}}(x)=\dfrac23\tanh x\,\cosh\dfrac x2,\qquad
\tau_{\beta^{*}}(x)=\tanh x .
\end{equation}

\textit{Tangent vector:}
\begin{equation}
\dfrac{d\beta^{*}}{dx}(x) = \dfrac32\coth x\,\operatorname{sech}\dfrac x2
\left(
2\operatorname{sech}^2\dfrac x2-\dfrac14\cosh^2\dfrac x2-1,\
2\sinh\dfrac x2\operatorname{sech}^2\dfrac x2,\
\dfrac14\cosh^2\dfrac x2+1\right).
\end{equation}

\textit{Coordinates of the curve $\beta^{*}$ (up to translation):}
\begin{equation}\label{eq:alpha_star_explicit 1}
\beta^{*}(x) = \begin{bmatrix}
-\dfrac34\cosh\dfrac x2+\dfrac92\operatorname{sech}\dfrac x2-\operatorname{sech}^3\dfrac x2+\dfrac98\ln\!\Big(\tanh\dfrac x4\Big) \\[14pt]
3\tanh\dfrac x2+\tanh^3\dfrac x2 \\[14pt]
\dfrac34\cosh\dfrac x2-\dfrac32\operatorname{sech}\dfrac x2+\dfrac{15}{8}\ln\!\Big(\tanh\dfrac x4\Big)
\end{bmatrix}.
\end{equation}

\textit{Frenet frame (tangent $T_{\beta^{*}}$ timelike):}
\begin{equation}
\begin{aligned}
T_{\beta^{*}}(x)&=
\left( 2\operatorname{sech}^2\dfrac x2-\dfrac14\cosh^2\dfrac x2-1,\; 2\sinh\dfrac x2\operatorname{sech}^2\dfrac x2,\; \dfrac14\cosh^2\dfrac x2+1 \right),\\[6pt]
N_{\beta^{*}}(x)&=
\left( -\dfrac14\sinh\dfrac x2\cosh\dfrac x2-2\sinh\dfrac x2\operatorname{sech}^3\dfrac x2,\; 2\operatorname{sech}^3\dfrac x2-\operatorname{sech}\dfrac x2,\; \dfrac14\sinh\dfrac x2\cosh\dfrac x2 \right),\\[6pt]
B_{\beta^{*}}(x)&=
\left( \dfrac34\cosh\dfrac x2-2\operatorname{sech}^3\dfrac x2,\; -\tanh\dfrac x2-2\sinh\dfrac x2\operatorname{sech}^3\dfrac x2,\; -\dfrac34\cosh\dfrac x2 \right).
\end{aligned}
\end{equation}

\textit{Mannheim curve $\beta=\beta^{*}+\lambda B_{\beta^{*}}$ (up to translation):}
\begin{equation}
\beta(x)=\begin{bmatrix}
\dfrac92\operatorname{sech}\dfrac x2-3\operatorname{sech}^3\dfrac x2+\dfrac98\ln\!\Big(\tanh\dfrac x4\Big)\\[14pt]
3\tanh^3\dfrac x2\\[14pt]
-\dfrac32\operatorname{sech}\dfrac x2+\dfrac{15}{8}\ln\!\Big(\tanh\dfrac x4\Big)
\end{bmatrix}.
\end{equation}

\textit{Frenet frame of $\beta$ (tangent $T_\beta$ timelike):}
\begin{equation}
T_{\beta}=\cosh x\,T_{\beta^{*}}-\sinh x\,N_{\beta^{*}},\qquad
N_{\beta}=-B_{\beta^{*}},\qquad
B_{\beta}=-\sinh x\,T_{\beta^{*}}+\cosh x\,N_{\beta^{*}},
\end{equation}
that is,
\begin{equation}
\begin{aligned}
T_{\beta}(x)&=\left( 9-\dfrac94\cosh^2\dfrac x2-6\operatorname{sech}^2\dfrac x2,\; 6\sinh^3\dfrac x2\operatorname{sech}^2\dfrac x2,\; \dfrac94\cosh^2\dfrac x2-1 \right),\\[6pt]
N_{\beta}(x)&=\left( -\dfrac34\cosh\dfrac x2+2\operatorname{sech}^3\dfrac x2,\; \tanh\dfrac x2+2\sinh\dfrac x2\operatorname{sech}^3\dfrac x2,\; \dfrac34\cosh\dfrac x2 \right),\\[6pt]
B_{\beta}(x)&=\Big( \sinh\dfrac x2\Big(\dfrac94\cosh\dfrac x2-8\operatorname{sech}\dfrac x2+2\operatorname{sech}^3\dfrac x2\Big),\\
&\qquad -6\cosh\dfrac x2+9\operatorname{sech}\dfrac x2-2\operatorname{sech}^3\dfrac x2,\; -\dfrac94\sinh\dfrac x2\cosh\dfrac x2 \Big).
\end{aligned}
\end{equation}

\textit{Curvature, torsion and angle of the pair:}
\begin{equation}
\dfrac{ds}{dx}=\dfrac32\operatorname{csch}x\,\operatorname{sech}\dfrac x2,\qquad
\kappa_{\beta}(x)=\sinh^2x,\qquad
\tau_{\beta}(x)=\sinh x\cosh x,
\end{equation}
\begin{equation}
\cosh\varphi_2=-\langle T_\beta,T_{\beta^{*}}\rangle=\cosh x,\quad\text{i.e. }\ \varphi_2=x .
\end{equation}

\textit{Non-helix and non-slant nature:} Since $\kappa_\beta=\sinh^2x$ and $\tau_\beta=\sinh x\cosh x$,
\begin{equation}
\kappa^{\smash[b]{T_{\beta}}}_{g}(x)=\frac{\tau_\beta(x)}{\kappa_\beta(x)}=\coth x,
\end{equation}
which is clearly non-constant. By Lancret's theorem in $\mathbb{L}^3$ \cite[Theorem~2.12]{Lopez2014}, $\beta$ is not a helix.

There do not exist $\textbf{v}=(\bm{v}_{1},\bm{v}_{2},\bm{v}_{3})\in\mathbb{R}^3\setminus\{0\}$ and $\bm{d}\in\mathbb{R}$ such that $\langle N_{\beta}(x),\textbf{v}\rangle = \bm{d}$ for all $x$.

In fact, on the contrary, writing $u=x/2$,
\[
\bm{v}_{1}\Big(-\tfrac34\cosh u+2\operatorname{sech}^3u\Big)+\bm{v}_{2}\Big(\tanh u+2\sinh u\operatorname{sech}^3u\Big)-\bm{v}_{3}\,\tfrac34\cosh u=\bm{d}.
\]
Letting $u\to+\infty$, the terms in $\cosh u$ force $\bm{v}_3=-\bm{v}_1$, and then the left-hand side tends to $\bm{v}_2$, so $\bm{v}_2=\bm{d}$. The equation becomes
\[
2\bm{v}_1\operatorname{sech}^3u+\bm{d}\,\big(\tanh u-1+2\sinh u\operatorname{sech}^3u\big)=0,
\]
and since $\tanh u-1+2\sinh u\operatorname{sech}^3u\sim6e^{-2u}$ while $\operatorname{sech}^3u=O(e^{-3u})$, we get $\bm{d}=0$, hence $\bm{v}_1=\bm{v}_2=\bm{v}_3=0$, a contradiction. Then, by definition, $\beta$ is not a slant helix.

Moreover, $\dfrac{\tau_{\beta^*}}{\kappa_{\beta^*}}=\dfrac32\operatorname{sech}\dfrac x2$ is non-constant, so $\beta^*$ is not a helix either.

\textit{Illustration:}
\begin{figure}[htbp]
\centering
\includegraphics[width=1.0\textwidth]{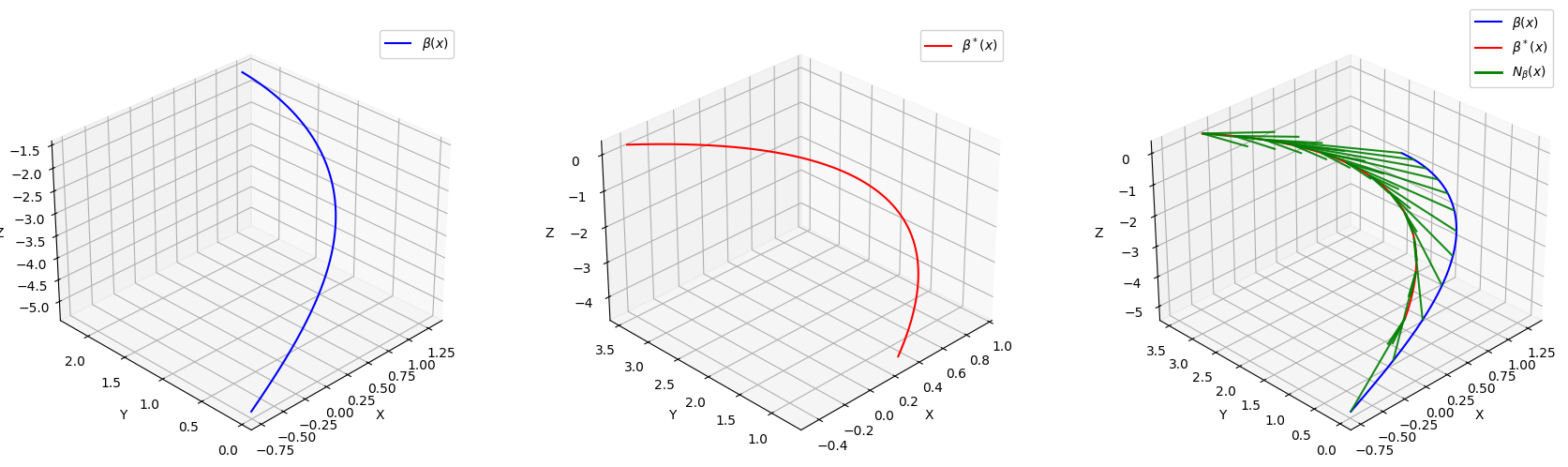}
\caption{Mannheim pair of Example~\ref{ex:partner_type2} for $\lambda=1$ and $x\in[0.5,3]$: the Mannheim curve $\beta$ (blue), its partner $\beta^*$ (red), and both curves together with the principal normal $N_\beta$ (green arrows).}
\label{fig:partner_example}
\end{figure}

In Figure~\ref{fig:partner_example}, the green arrows are drawn at twelve equally spaced values $x_i$. Since $\lambda=1$ and $\beta^*=\beta+\lambda N_\beta$, each arrow starts at $\beta(x_i)$ and ends at $\beta^*(x_i)$, and it lies on the line of $N_\beta=-B_{\beta^*}$, as in the definition of Mannheim pair. The interval $[0.5,3]$ was chosen because $\ln\tanh\frac x4\to-\infty$ as $x\to0^+$, and because $\beta^*$ and $N_\beta$ grow like $\cosh\frac x2$ while $\beta\to(0,3,0)$ as $x\to+\infty$, so for large $x$ the curve $\beta$ would be tiny compared to $\beta^*$. The drawing uses the Euclidean structure of $\mathbb{R}^3$ only to represent the curves (axes not to scale), so the angles seen in the figure are not the Lorentzian ones; in particular, $N_\beta$ does not look orthogonal to $T_\beta$.

\end{example}

\section*{Declarations}
\noindent\textbf{Funding.} There is no funding for this work.\par\smallskip
\noindent\textbf{Data availability.} Not applicable.\par\smallskip
\noindent\textbf{Competing interests.} The authors declare that they have no competing interests.\par\smallskip
\noindent\textbf{Author contributions.} All authors contributed equally to the writing of this paper. All authors read and approved the final manuscript.

\end{document}